\newtheorem{theorem}{Theorem}[section]
\theoremstyle{definition}
\newtheorem{definition}[theorem]{Definition}
\newtheorem{proposition}[theorem]{Proposition}
\newtheorem{corollary}[theorem]{Corollary}
\newtheorem{remark}[theorem]{Remark}
\newtheorem{example}[theorem]{Example}
\theoremstyle{remark}
\numberwithin{equation}{section}
\begin{document}

\title[Weak Annihilators and Nilpotent Associated Primes]{On Weak Annihilators and Nilpotent Associated Primes of Skew PBW Extensions}



\author{Sebasti\'an Higuera}
\address{Universidad Nacional de Colombia - Sede Bogot\'a}
\curraddr{Campus Universitario}
\email{sdhiguerar@unal.edu.co}
\thanks{}

\author{Armando Reyes}
\address{Universidad Nacional de Colombia - Sede Bogot\'a}
\curraddr{Campus Universitario}
\email{mareyesv@unal.edu.co}

\thanks{This work was supported by Faculty of Science, Universidad Nacional de Colombia - Sede Bogot\'a, Bogot\'a, D. C., Colombia [grant number 52464].}

\subjclass[2020]{16D25, 16N40, 16S30, 16S32, 16S36, 16S38}

\keywords{Weak annihilator, nilpotent associated prime, nilpotent good polyomial, skew PBW extension}

\date{}

\dedicatory{Dedicated to the memory of Professor V. A. Artamonov}

\begin{abstract}

We investigate the notions of weak annihilator and nilpotent associated prime defined by Ouyang and Birkenmeier \cite{OuyangBirkenmeier2012} in the setting of noncommutative rings having PBW bases. We extend several results formulated in the literature concerning annihilators and associated primes of commutative rings and skew polynomial rings to a more general setting of algebras not considered before. We exemplify our results with families of algebras appearing in the theory of enveloping algebras, differential operators on noncommutative spaces, noncommutative algebraic geometry, and theoretical physics. Finally, we present some ideas for future research.

\end{abstract}

\maketitle


\section{Introduction}\label{introduction}
Throughout the paper, every ring is associative (not necessarily commutative) with identity unless otherwise stated. For a ring $R$, $P(R)$, $N(R)$, and $N^{*}(R)$ denote the prime radical of $R$, the set of all nilpotent elements of $R$, and the upper radical $N^{*}(R)$ of $R$ (i.e., the sum of all its nil ideals of $R$), respectively. It is well-known that $N^{*}(R) \subseteq N(R)$, and if the equality $N^{*}(R) = N(R)$ holds, Marks \cite{Marks2001} called $R$ an {\em NI} ring. Due to Birkenmeier et al. \cite{Birkenmeieretal1993}, $R$ is called {\em 2-primal} if $P(R) = N(R)$ (Sun \cite{Sun1991} used the term \emph{weakly symmetric} for these rings). Notice that every reduced ring (a ring which has no non-zero nilpotent elements) is 2-primal. The importance of 2-primal rings is that they can be considered as a generalization of commutative rings and reduced rings. For more details about 2-primal rings, see Birkenmeier et al. \cite{BirkenmeierParkRizvi2013} and Marks \cite{Marks2003}. 

\medskip

For a subset $B$ of a ring $R$, the sets $r_R(B) = \{r\in R\mid Br = 0\}$ and $l_R(B) = \{r\in R\mid rB = 0\}$ represent the right and left annihilator of $B$ in $R$, respectively. Properties of annihilators of subsets in rings have been investigated by several authors \cite{BeachyBlair1975, Faith1989, Faith1991, Kaplansky1965,   Zelmanowitz1976}. For example, Kaplansky \cite{Kaplansky1965} introduced the {\em Baer rings} as those rings for which the right (left) annihilator of every nonempty subset of the ring is generated by an idempotent element. This concept has its roots in functional analysis, having close links to $C^*$-algebras and von Neumann algebras (e.g., Berberian \cite{Berberian2010}).  Closely related to Baer rings are the quasi-Baer rings. A ring $R$ is said to be {\em quasi-Baer} if the left annihilator of an ideal of $R$ is generated by an idempotent. Now, $R$ is called {\em right} ({\em left}) {\em p.p.} ({\em principally projective}) or {\em left Rickart ring} if the right (left) annihilator of each element of $R$ is generated by an idempotent (equivalently, any principal left ideal of the ring is projective). Birkenmeier \cite{Birkenmeieretal2001} defined a ring $R$ to be {\em right} ({\em left}) {\em principally} {\em quasi-Baer} (or simply {\em right} ({\em left}) {\em p.q.-Baer}) ring if the right annihilator of each principal right (left) ideal of $R$ is generated (as a right ideal) by an idempotent. Families of rings satisfying Baer, quasi-Baer, p.p. and p.q.-Baer conditions have been presented in the literature. For instance, Armendariz \cite{Armendariz1974} established that if $R$ is a reduced ring, then the commutative polynomial ring $R[x]$ is a Baer ring if and only if $R$ is a Baer ring \cite[Theorem B]{Armendariz1974}. Birkenmeier et al. \cite{Birkenmeieretal2001} showed that the quasi-Baer condition is preserved by many polynomial extensions and proved that a ring $R$ is right p.q.-Baer if and only if $R[x]$ is right p.q.-Baer. For more details about these ring-theoretical properties, commutative and noncommutative examples, see \cite{Birkenmeieretal2001, Hashemietal2003, HashemiMoussavi2005, HongKimKwak2000, ReyesSuarezUMA2018}, and references therein, and the excellent treatment by Birkenmeier et al. \cite{BirkenmeierParkRizvi2013}.

\medskip

As a generalization of right and left annihilators, Ouyang and Birkenmeier \cite{OuyangBirkenmeier2012} introduced the notion of weak annihilator of a subset in a ring, and investigated its properties over Ore extensions (also known as skew polynomial rings) $R[x;\sigma,\delta]$ defined by Ore \cite{Ore1933}. More exactly, for $R$ a ring and $B$ a subset of $R$, they defined $N_R(B) = \{r\in R\mid br \in N(R),\ {\rm for\ all}\ b\in B\}$, which is called the {\em weak annihilator of} $B$ {\em in} $R$ \cite[Definition 2.1]{OuyangBirkenmeier2012}. The theory developed by them makes use of the notion of compatible ring in the sense of Annin \cite{Annin2004} (see also Hashemi and Moussavi \cite{HashemiMoussavi2005}). Briefly, if $R$ is a ring, $\sigma$ is an endomorphism of $R$, and $\delta$ is a $\sigma$-derivation of $R$, then (i) $R$ is said to be $\sigma$-{\em compatible} if for each $a, b\in R$, $ab = 0$ if and only if $a\sigma(b)=0$ (necessarily, the endomorphism $\sigma$ is injective). (ii) $R$ is called $\delta$-{\em compatible} if for each $a, b\in R$, $ab = 0$ implies $a\delta(b)=0$. (iii) If $R$ is both $\sigma$-compatible and $\delta$-compatible, then $R$ is called ($\sigma,\delta$)-{\em compatible}. 

\medskip

For $R$ a $(\sigma,\delta)$-compatible 2-primal ring, Ouyang and Birkenmeier \cite{OuyangBirkenmeier2012} proved the following: (i) if for each subset $B\nsubseteq N(R)$, $N_R(B)$ is generated as an ideal by a nilpotent element, then for each subset $U\nsubseteq N(R[x; \sigma, \delta])$, $N_{R[x;\sigma,\delta]}(U)$ is generated as an ideal by a nilpotent element \cite[Theorem 2.1]{OuyangBirkenmeier2012}. (ii) The following two statements are equivalent: (1) for each subset $B\nsubseteq N(R)$, $N_R(B)$ is generated as an ideal by a nilpotent element. (2) For each subset $U\nsubseteq N(R[x;\sigma])$, $N_{R[x;\sigma]}(U)$ is generated as an ideal by a nilpotent element \cite[Theorem 2.2]{OuyangBirkenmeier2012}. (iii) If for each principal right ideal $p R\nsubseteq N(R)$, $N_R(p R)$ is generated as an ideal by a nilpotent element, then for each principal right ideal $f(x)  R[x;\sigma,\delta] \nsubseteq N(R[x;\sigma,\delta])$, $N_{R[x;\sigma,\delta]}(f(x) R[x;\sigma,\delta])$ is generated as an ideal by a nilpotent element \cite[Theorem 2.3]{OuyangBirkenmeier2012}. (iv) The following statements are equivalent: (1) For each principal right ideal $p R\nsubseteq N(R)$, $N_R(p R)$ is generated as an ideal by a nilpotent element. (2) For each principal right ideal $f(x)  R[x;\sigma] \nsubseteq N(R)$, $N_{R[x;\sigma]} (f(x) R[x;\sigma])$ is generated as an ideal by a nilpotent element \cite[Theorem 2.4]{OuyangBirkenmeier2012}. (v) If for each $p\notin N(R)$, $N_R(p)$ is generated as an ideal by a nilpotent element, then for each $f(x)\notin N(R[x;\sigma,\delta])$, $N_{R[x;\sigma, \delta]} (f(x))$ is generated as an ideal by a nilpotent element \cite[Theorem 2.5]{OuyangBirkenmeier2012}. (vi) The following statements are equivalent: (1) for each $p \notin N_R(p)$, $N_R(p)$ is generated as an ideal by a nilpotent element. (2) For each skew polynomial $f(x)\notin N(R[x;\sigma])$, $N_{R[x;\sigma]}(f(x))$ is generated as an ideal by a nilpotent element.

\medskip

With the aim of investigating if the previous results obtained by Ouyang and Birkenmeier hold in noncommutative polynomial extensions more general than Ore extensions, in this paper we consider the class of skew Poincar\'e-Birkhoff-Witt extensions introduced by Gallego and Lezama \cite{GallegoLezama2011} because they include families of rings appearing in noncommutative algebra and noncommutative geometry (see Section \ref{SPBW} for a description of the generality of these objects with respect to another families of noncommutative rings). Ring-theoretical, homological and geometrical properties of these objects have been investigated by some people (e.g. \cite{Artamonov2015, LFGRSV, HashemiKhalilAlhevaz2019, HigueraReyes2022, Lezama2020, LezamaVenegas2020, SuarezChaconReyes2022, Tumwesigyeetal2020}).  

\medskip


The paper is organized as follows. In Section \ref{SPBW}, we recall some definitions and results about skew PBW extensions and compatible rings for finite families of endomorphisms and derivations of rings. Section \ref{WA} contains the original results of the article concerning weak annihilator ideals of skew PBW extensions (Theorems \ref{Theorem3.4}, \ref{theorem3.4}, \ref{theorem3.4.1}, \ref{Theorem3.1.6}, \ref{Theorem3.13}, \ref{theorem3.9}, and \ref{Theorem3.18}). Next, in Section \ref{NAP}, we characterize the nilpotent associated prime ideals of skew PBW extensions (Theorems \ref{proposition3.4} and \ref{Nilpotentprimes}). As expected, our results generalize those above corresponding skew polynomial extensions presented by Ouyang et al. \cite{OuyangBirkenmeier2012, OuyangLiu2012}. It is worth mentioning that this work is a sequel of the study of ideals of skew PBW extensions that has been realized by different authors (e.g. \cite{HashemiKhalilAlhevaz2019, LezamaAcostaReyes2015, LouzariReyes2020,  NinoReyes2020, ReyesRodriguez2021, ReyesSuarez2021, ReyesSuarezYesica2018}). In this way, the results formulated in this paper about associated primes extend or contribute to those presented by Annin \cite{Annin, Annin2002, Annin2004}, Bhat \cite{Bhat2008, Bhat2010}, Brewer and Heinzer \cite{BrewerHeinzer1974}, Faith \cite{Faith1991}, Leroy and Matczuk \cite{LeroyMatczuk2004}, and  Ni\~no et al. \cite{NinoRamirezReyes2020}. Finally, Section \ref{examplespaper} illustrate the results established in Sections \ref{WA} and \ref{NAP} with several noncommutative algebras that cannot be expressed as Ore extensions.

\medskip

Throughout the paper, $\mathbb{N}$, $\mathbb{Z}$, $\mathbb{R}$, and $\mathbb{C}$ denote the classical numerical systems. We assume the set of natural numbers including zero. The symbol $\Bbbk$ denotes a field and $\Bbbk^{*} := \Bbbk\ \backslash\ \{0\}$.

\section{Skew Poincar\'e-Birkhoff-Witt extensions}\label{SPBW}
Skew PBW extensions were defined by Gallego and Lezama \cite{GallegoLezama2011} with the aim of generalizing Poincar\'e-Birkhoff-Witt extensions introduced by Bell and Goodearl \cite{BellGoodearl1988} and Ore extensions of injective type defined by Ore in \cite{Ore1933}. Over the years, several authors have shown that skew PBW extensions also generalize families of noncommutative algebras such as 3-dimensional skew polynomial algebras introduced by Bell and Smith \cite{BellSmith1990}, diffusion algebras defined by Isaev et al. \cite{IPR01}, ambiskew polynomial rings introduced by Jordan in several papers \cite{Jordan1993, Jordan1993b, Jordan1995, Jordan1995b, Jordan2000,JordanWells1996}, solvable polynomial rings introduced by Kandri-Rody and Weispfenning  \cite{KandryWeispfenninig1990}, almost normalizing extensions defined by McConnell and Robson \cite{McConnellRobson2001}, skew bi-quadratic algebras recently introduced by Bavula \cite{Bavula2021}, and others (see \cite {LFGRSV} for more details). 

\medskip

The importance of skew PBW extensions is that they do not assume that the coefficients commute with the variables, and the coefficients do not necessarily belong to fields (see Definition \ref{gpbwextension}). As a matter of fact, skew PBW extensions contain well-known groups of algebras such as some types of $G$-algebras in the sense of Apel \cite{Apel1988}, Auslander-Gorenstein rings, some Calabi-Yau and skew Calabi-Yau algebras, some Artin-Schelter regular algebras, some Koszul algebras, quantum polynomials, some quantum universal enveloping algebras, families of differential operator rings, and many other algebras of great interest in noncommutative algebraic geometry and noncommutative differential geometry. For more details about relations between skew PBW extensions and other noncommutative algebras having PBW bases, see \cite{BuesoTorrecillasVerschoren2003, LFGRSV, GomezTorrecillas2014, LezamaVenegas2020, Seiler2010}, and references therein.

\begin{definition}[{\cite[Definition 1]{GallegoLezama2011}}]\label{gpbwextension}
Let $R$ and $A$ be rings. We say that $A$ is a {\em skew PBW extension} (also known as {\em $\sigma$-PBW extension}) {\em over}  $R$, which is denoted by $A:=\sigma(R)\langle
x_1,\dots,x_n\rangle$, if the following conditions hold:
\begin{enumerate}
\item[\rm (i)] $R$ is a subring of $A$ sharing the same multiplicative identity element.
\item[\rm (ii)]there exist elements $x_1,\dots ,x_n\in A$ such that $A$ is a left free $R$-module with basis given by ${\rm Mon}(A):= \{x^{\alpha}=x_1^{\alpha_1}\cdots
x_n^{\alpha_n}\mid \alpha=(\alpha_1,\dots ,\alpha_n)\in
\mathbb{N}^n\}$, where $x_1^{0}\dotsb x_n^{0}:=1$.
\item[\rm (iii)]For each $1\leq i\leq n$ and any $r\in R\ \backslash\ \{0\}$, there exists an element $c_{i,r}\in R\ \backslash\ \{0\}$ such that $x_ir-c_{i,r}x_i\in R$.
\item[\rm (iv)] For any elements $1\leq i,j\leq n$, there exists $d_{i,j}\in R\ \backslash\ \{0\}$ such that $x_jx_i-d_{i,j}x_ix_j\in R+Rx_1+\cdots +Rx_n$ (i.e., there exist elements $r_0^{(i,j)}, r_1^{(i,j)}, \dotsc, r_n^{(i,j)}$ of $R$ with $x_jx_i - d_{i,j}x_ix_j = r_0^{(i,j)} + \sum_{k=1}^{n} r_k^{(i,j)}x_k$).
\end{enumerate}
\end{definition}

From Definition \ref{SPBW} it follows that every non-zero element $f \in A$ can be uniquely expressed as $f = a_0X_0 + a_1X_1 + \cdots + a_mX_m$, with $a_i \in R$ and $X_i \in \text{Mon}(A)$, for $0 \leq i \leq m$ ($X_0:=1$) \cite[Remark 2]{GallegoLezama2011}. 

\medskip

Thinking about Ore extensions \cite{Ore1933}, if $A=\sigma(R)\langle x_1,\dots,x_n\rangle$ is a skew PBW extension over $R$, then for every $1\leq i\leq n$ there exist an injective endomorphism $\sigma_i:R\rightarrow R$ and a $\sigma_i$-derivation $\delta_i:R\rightarrow R$ such that $x_ir=\sigma_i(r)x_i+\delta_i(r)$, for each $r\in R$ \cite[Proposition 3]{GallegoLezama2011}. When necessary, we will write  $\Sigma:=\{\sigma_1,\dotsc, \sigma_n\}$ and $\Delta:=\{\delta_1,\dotsc, \delta_n\}$.

\begin{definition}\label{sigmapbwderivationtype}
Let $A=\sigma(R)\langle
x_1,\dots,x_n\rangle$ be a skew PBW extension over $R$.
\begin{enumerate}
\item[\rm (i)] (\cite[Definition 4]{GallegoLezama2011}) $A$ is called \textit{quasi-commutative} if the conditions {\rm(}iii{\rm)} and {\rm(}iv{\rm)} in Definition \ref{gpbwextension} are replaced by the following: (iii') for each $1\leq i\leq n$ and all $r\in R\ \backslash\ \{0\}$, there exists $c_{i,r}\in R\ \backslash\ \{0\}$ such that $x_ir=c_{i,r}x_i$. (iv') for any $1\leq i,j\leq n$, there exists $d_{i,j}\in R\ \backslash\ \{0\}$ such that $x_jx_i=d_{i,j}x_ix_j$.
\item[\rm (ii)] (\cite[Definition 4]{GallegoLezama2011}) $A$ is called \textit{bijective} if $\sigma_i$ is bijective for each $1\leq i\leq n$, and $d_{i,j}$ is invertible, for any $1\leq i, j\leq n$.
\item[\rm (iii)] (\cite[Definition 2.3]{LezamaAcostaReyes2015}) $A$ is called of {\em endomorphism type} if $\delta_i=0$, for every $i$.  In addition, if every $\sigma_i$ is bijective, then $A$ is said to be a skew PBW extension of {\em automorphism type}.
\end{enumerate}
\end{definition}

Some relations between skew polynomial rings \cite{Ore1933}, PBW extensions \cite{BellGoodearl1988} and skew PBW extensions are presented below.

\begin{remark}\label{comparisonendomorphism}
\begin{itemize}
    \item [\rm (i)] (\cite[Example 5(3)]{LezamaReyes2014}) If $R[x_1;\sigma_1,\delta_1]\dotsb [x_n;\sigma_n,\delta_n]$ is an iterated Ore extension such that $\sigma_i$ is injective for $1\le i\le n$; $\sigma_i(r)$, $\delta_i(r)\in R$, for $r\in R$ and $1\le i\le n$; $\sigma_j(x_i)=c_ix_i+d_i$, for $i < j$, $c_i, d_i\in R$, where $c$ has a left inverse; $\delta_j(x_i)\in R + Rx_1 + \dotsb + Rx_n$, for $i < j$, then $R[x_1;\sigma_1,\delta_1]\dotsb [x_n;\sigma_n, \delta_n] \cong \sigma(R)\langle x_1,\dotsc, x_n\rangle$. 
    \item [\rm (ii)] Skew polynomial rings are not included in PBW extensions. For instance, the quantum plane defined as the quotient $\Bbbk\langle x, y\rangle / \langle xy - qyx\mid q\in \Bbbk^{*} \rangle$ is an Ore extension of injective type given by $\Bbbk[y][x;\sigma]$, where $\sigma(y) = qy$, but it is clear that this cannot be expressed as a PBW extension over $\Bbbk$ or $\Bbbk[y]$. 
    \item [\rm (iii)] Skew PBW extensions are not contained in skew polynomial rings. For example, the universal enveloping algebra $U(\mathfrak{g})$ of a Lie algebra $\mathfrak{g}$ is a PBW extension (\cite[Section 5]{BellGoodearl1988}), and hence a skew PBW extension, but in general it cannot be expressed as an Ore extension.  
     \item [\rm (iii)] (\cite[Theorem 2.3]{LezamaReyes2014}) If $A=\sigma(R)\langle x_1,\dotsc, x_n\rangle$ is a quasi-commutative skew PBW extension over a ring $R$, then $A$ is isomorphic to an iterated Ore extension of endomorphism type (i.e., that is, $R[x;\sigma]$ with $\sigma$ surjective).
    \item [\rm (iv)] (\cite[Remark 3.4]{HigueraReyes2022}) Skew PBW extensions of endomorphism type are more general than iterated Ore extensions of endomorphism type. Let us illustrate the situation with two and three indeterminates.
		
	For the iterated Ore extension of endomorphism type $R[x;\sigma_x][y;\sigma_y]$, if $r\in R$ then we have the following relations: $xr = \sigma_x(r)x$, $yr = \sigma_y(r)y$, and $yx = \sigma_y(x)y$. Now, if we have $\sigma(R)\langle x, y\rangle$ a skew PBW extension of endomorphism type over $R$, then for any $r\in R$, Definition \ref{gpbwextension} establishes that $xr=\sigma_1(r)x$, $yr=\sigma_2(r)y$, and $yx = d_{1,2}xy + r_0 + r_1x + r_2y$, for some elements $d_{1,2}, r_0, r_1$ and $r_2$ belong to $R$. From these relations it is clear which one of them is more general.
	
If we have the iterated Ore extension $R[x;\sigma_x][y;\sigma_y][z;\sigma_z]$, then for any $r\in R$, $xr = \sigma_x(r)x$, $yr = \sigma_y(r)y$, $zr = \sigma_z(r)z$, $yx = \sigma_y(x)y$, $zx = \sigma_z(x)z$, $zy = \sigma_z(y)z$. For the skew PBW extension of endomorphism type $\sigma(R)\langle x, y, z\rangle$, $xr=\sigma_1(r)x$, $yr=\sigma_2(r)y$, $zr = \sigma_3(r)z$, $yx = d_{1,2}xy + r_0 + r_1x + r_2y + r_3z$, $zx = d_{1,3}xz + r_0' + r_1'x + r_2'y + r_3'z$, and $zy = d_{2,3}yz + r_0'' + r_1''x + r_2''y + r_3''z$, for some elements $d_{1,2}, d_{1,3}, d_{2,3}, r_0, r_0', r_0'', r_1, r_1', r_1'', r_2, r_2', r_2'', r_3$, $r_3', r_3''$ of $R$. As the number of indeterminates increases, the differences between both algebraic structures are more remarkable.
\end{itemize}
\end{remark}

Following \cite[Section 3]{GallegoLezama2011}, if $A = \sigma(R)\langle x_1,\dots,x_n\rangle$ is a skew PBW extension over $R$, then we consider the following notation:
\begin{enumerate}
\item[\rm (i)] For the families $\Sigma$ and $\Delta$ in Definition \ref{gpbwextension}, from now on, if $\alpha=(\alpha_1,\dots,\alpha_n)$ belongs to $\mathbb{N}^n$, we will write $\sigma^{\alpha}:=\sigma_1^{\alpha_1}\circ \dotsb \circ \sigma_n^{\alpha_n}$,  $\sigma^{-\alpha}:=\sigma_n^{\alpha_n}\circ \dotsb \circ \sigma_1^{\alpha_1}$ and $\delta^{\alpha} = \delta_1^{\alpha_1} \circ \dotsb \circ \delta_n^{\alpha_n}$, where $\circ$ denotes the classical composition of functions, and $|\alpha|:=\alpha_1+\cdots+\alpha_n$. If $\beta=(\beta_1,\dots,\beta_n)\in \mathbb{N}^n$, then
$\alpha+\beta:=(\alpha_1+\beta_1,\dots,\alpha_n+\beta_n)$.
\item[\rm (ii)] Let $\succeq$ be a total order defined on ${\rm Mon}(A)$. If $x^{\alpha}\succeq x^{\beta}$ but $x^{\alpha}\neq x^{\beta}$, we will write $x^{\alpha}\succ x^{\beta}$. As we saw above, if $f$ is a non-zero element of $A$, then $f$ can be expressed uniquely as $f=a_0X_0 + a_1X_1+\dotsb +a_mX_m$, with $a_i\in R$, $X_m\succ \dotsb \succ X_1$, and $X_0 := 1$. Eventually, we use expressions as $f=a_0Y_0 + a_1Y_1+\dotsb +a_mY_m$, with $a_i\in R$, and $Y_m\succ \dotsb \succ Y_1$. We define ${\rm
lm}(f):=X_m$, the \textit{leading monomial} of $f$; ${\rm
lc}(f):=a_m$, the \textit{leading coefficient} of $f$; ${\rm
lt}(f):=a_mX_m$, the \textit{leading term} of $f$; ${\rm exp}(f):={\rm exp}(X_m)$, the \textit{order} of $f$. Notice that $\deg(f):={\rm max}\{\deg(X_i)\}_{i=1}^m$. If $f=0$, then
${\rm lm}(0):=0$, ${\rm lc}(0):=0$, ${\rm lt}(0):=0$. Finally, we also
consider $X\succ 0$ for any $X\in {\rm Mon}(A)$, and hence we extend $\succeq$ to ${\rm Mon}(A)\cup \{0\}$.

From \cite[Definition 11]{GallegoLezama2011}, if $\succeq$ is a total order on ${\rm Mon}(A)$, then we say that $\succeq$ is a {\em monomial order} on ${\rm Mon}(A)$ if the following conditions hold:

\begin{itemize}
\item For every $x^{\beta}, x^{\alpha}, x^{\gamma}, x^{\lambda}\in {\rm Mon}(A)$, the relation $x^{\beta}\succeq x^{\alpha}$ implies ${\rm lm}(x^{\gamma}x^{\beta}x^{\lambda}) \succeq {\rm lm}(x^{\gamma}x^{\alpha}x^{\lambda})$ (i.e., the total order is compatible with multiplication). 
\item $x^{\alpha}\succeq 1$, for every $x^{\alpha}\in {\rm Mon}(A)$.
\item $\succeq$ is degree compatible, i.e., $|\beta| \succeq |\alpha|\Rightarrow x^{\beta}\succeq x^{\alpha}$.
\end{itemize}

Monomial orders are also called {\em admissible orders}. The condition (iii) of the previous definition is needed in the proof of the fact that every monomial order on ${\rm Mon}(A)$ is a well order, that is, there are not infinite decreasing chains in ${\rm Mon}(A)$ \cite[Proposition 12]{GallegoLezama2011}. 
\item [\rm (iii)] For a skew PBW extension $A = \sigma(R)\langle x_1,\dotsc, x_n\rangle$ over a ring $R$ and $B$ a subset of $R$, $BA$ denotes the set $\bigl\{f = \sum_{i=0}^{m} b_iX_i\mid b_i \in B,\ {\rm for\ all}\ i\bigr\}$.
\end{enumerate}

Propositions \ref{coefficientes} and \ref{Reyes2015Proposition2.9} are very useful in the calculations presented in Sections \ref{WA} and \ref{NAP}. 

\begin{proposition}[{\cite[Theorem 7]{GallegoLezama2011}}]\label{coefficientes}
If $A$ is a polynomial ring with coefficients in $R$ with respect to the set of indeterminates $\{x_1,\dots,x_n\}$, then $A=\sigma(R)\langle x_1,\dots,x_n\rangle$ is a skew PBW extension over $R$ if and only if the following conditions hold:
\begin{enumerate}
\item[\rm (1)]for each $x^{\alpha}\in {\rm Mon}(A)$ and every $0\neq r\in R$, there exist unique elements $r_{\alpha}:=\sigma^{\alpha}(r)\in R\ \backslash\ \{0\}$, $p_{\alpha ,r}\in A$, such that $x^{\alpha}r=r_{\alpha}x^{\alpha}+p_{\alpha, r}$,  where $p_{\alpha ,r}=0$, or $\deg(p_{\alpha ,r})<|\alpha|$ if $p_{\alpha , r}\neq 0$. If $r$ is left invertible,  so is $r_\alpha$.
\item[\rm (2)]For each $x^{\alpha},x^{\beta}\in {\rm Mon}(A)$,  there exist unique elements $d_{\alpha,\beta}\in R$ and $p_{\alpha,\beta}\in A$ such that $x^{\alpha}x^{\beta}=d_{\alpha,\beta}x^{\alpha+\beta}+p_{\alpha,\beta}$, where $d_{\alpha,\beta}$ is left invertible, $p_{\alpha,\beta}=0$, or $\deg(p_{\alpha,\beta})<|\alpha+\beta|$ if $p_{\alpha,\beta}\neq 0$.
\end{enumerate}
\end{proposition}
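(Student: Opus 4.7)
The plan is to prove the biconditional in two directions, each reducing the abstract statement to a controlled induction on the total degree of the monomial(s) involved, leveraging that $\mathrm{Mon}(A)$ is a left $R$-basis to get uniqueness essentially for free.

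For the forward direction, I would assume $A=\sigma(R)\langle x_1,\dots,x_n\rangle$ satisfies Definition \ref{gpbwextension} and prove (1) by induction on $|\alpha|$. The case $|\alpha|=0$ is trivial, and for $|\alpha|=1$, say $\alpha=e_i$, condition (iii) of Definition \ref{gpbwextension} directly gives $x_ir=c_{i,r}x_i+s$ with $s\in R$; identifying $\sigma_i(r):=c_{i,r}$ (which is injective since $c_{i,r}\neq 0$ whenever $r\neq 0$) yields the desired expression. For the inductive step I would factor $x^{\alpha}=x_i x^{\alpha-e_i}$ for some $i$ with $\alpha_i>0$, apply the induction hypothesis to $x^{\alpha-e_i}r=\sigma^{\alpha-e_i}(r)x^{\alpha-e_i}+p_{\alpha-e_i,r}$, and then push $x_i$ past both summands using the $|\alpha|=1$ case; the leading term is $x_i\sigma^{\alpha-e_i}(r)x^{\alpha-e_i}=\sigma_i\sigma^{\alpha-e_i}(r)x^{\alpha}+(\cdots)$, with every other contribution having strictly smaller $x$-degree, and the composition of the $\sigma_i$'s matches the definition of $\sigma^{\alpha}$ given in Section \ref{SPBW}. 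Uniqueness follows because any two such decompositions differ by an expression of degree $<|\alpha|$ whose $x^{\alpha}$-component vanishes, and the basis $\mathrm{Mon}(A)$ forces all coefficients to agree. Item (2) is analogous by induction on $|\alpha|+|\beta|$, using condition (iv) as the base case and then repeatedly moving $x_j$'s past $x_i$'s to obtain the reordered product $d_{\alpha,\beta}x^{\alpha+\beta}$ modulo lower-degree terms; left-invertibility of $d_{\alpha,\beta}$ propagates from the left-invertibility of each $d_{i,j}$ through products.

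For the backward direction, I would assume (1) and (2) and verify each clause of Definition \ref{gpbwextension}. Conditions (i) and (ii) are part of the hypothesis that $A$ is a polynomial ring with coefficients in $R$ relative to the indeterminates $x_1,\dots,x_n$. For (iii), specialize (1) to $\alpha=e_i$: then $x_ir=\sigma_i(r)x_i+p_{e_i,r}$ with $\deg(p_{e_i,r})<1$, so $p_{e_i,r}\in R$ and $x_ir-\sigma_i(r)x_i\in R$ with $c_{i,r}:=\sigma_i(r)\in R\setminus\{0\}$. For (iv), specialize (2) to $\alpha=e_j$ and $\beta=e_i$ (for $i<j$): $x_jx_i=d_{e_j,e_i}x^{e_j+e_i}+p_{e_j,e_i}$ with $x^{e_j+e_i}=x_ix_j$ and $\deg(p_{e_j,e_i})<2$, which places $p_{e_j,e_i}\in R+Rx_1+\cdots+Rx_n$ and provides the left-invertible $d_{i,j}:=d_{e_j,e_i}$.

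The genuinely technical point, and the one I expect to be the main obstacle, is the bookkeeping in the inductive step for (1) and (2): one must show not only that the leading coefficient is exactly $\sigma^{\alpha}(r)$ (respectively $d_{\alpha,\beta}$) but also that every remainder accumulated when pushing indeterminates past each other or past elements of $R$ strictly decreases the $x$-degree, so the error term stays below $|\alpha|$ (respectively $|\alpha+\beta|$). This requires consistently applying the base-case relations to both the leading and remainder pieces and invoking the induction hypothesis to the remainders themselves, which live in lower-degree subspaces; together with the freeness of $\mathrm{Mon}(A)$ as a left $R$-basis, this simultaneously yields the existence, the identification of the coefficient with $\sigma^{\alpha}(r)$ or $d_{\alpha,\beta}$, and the uniqueness asserted in the statement.
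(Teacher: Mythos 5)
The paper does not actually prove this proposition: it is imported verbatim from Gallego and Lezama (their Theorem 7) and used as a black box, so there is no in-paper argument to compare yours against. That said, your plan --- reduce the backward direction to the specializations $\alpha=e_i$ and $(\alpha,\beta)=(e_j,e_i)$, and prove the forward direction by induction on $|\alpha|$ (resp. $|\alpha|+|\beta|$), peeling off one indeterminate at a time and reading off uniqueness from the freeness of ${\rm Mon}(A)$ --- is exactly the strategy of the cited source, and the existence, uniqueness, and degree-bookkeeping parts of your sketch are sound. One small point of hygiene: when you factor $x^{\alpha}=x_ix^{\alpha-e_i}$ you must take $i$ to be the \emph{least} index with $\alpha_i>0$, since the indeterminates do not commute and only then is $x_ix^{\alpha-e_i}$ literally the standard monomial $x^{\alpha}$; this choice is also what makes the accumulated composite of endomorphisms come out as $\sigma^{\alpha}=\sigma_1^{\alpha_1}\circ\dotsb\circ\sigma_n^{\alpha_n}$ in the prescribed order.

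The one genuine gap is in the left-invertibility clauses. You do not address the assertion that $r_{\alpha}$ is left invertible when $r$ is (this needs the maps $r\mapsto c_{i,r}$ to be unital ring endomorphisms, i.e., the normalization $x_ir=\sigma_i(r)x_i+\delta_i(r)$, which itself must be extracted from Definition \ref{gpbwextension} via the basis property). More seriously, you claim that the left-invertibility of $d_{\alpha,\beta}$ ``propagates from the left-invertibility of each $d_{i,j}$,'' but Definition \ref{gpbwextension}(iv) only asserts $d_{i,j}\in R\setminus\{0\}$, so as written your induction proves nothing about left-invertibility. The missing step is to observe that condition (iv) applies to \emph{both} orderings of each pair: substituting $x_jx_i=d_{i,j}x_ix_j+(\text{lower})$ into $x_ix_j=d_{j,i}x_jx_i+(\text{lower})$ and comparing the coefficient of $x_ix_j$ in the free basis ${\rm Mon}(A)$ yields $d_{j,i}d_{i,j}=1$, so each swap coefficient is indeed left invertible; combined with the fact that the $\sigma$'s are unital endomorphisms (hence preserve left-invertibility) and that products of left-invertible elements are left invertible, the leading coefficient $d_{\alpha,\beta}$, which is a product of elements $\sigma^{\gamma}(d_{i,j})$, is left invertible. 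Supplying these two observations would close the argument.
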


\begin{proposition}[{\cite[Proposition 2.7 and Remark 2.8]{ReyesRodriguez2021}}]\label{Reyes2015Proposition2.9}
Let $A=\sigma(R)\langle x_1,\dotsc, x_n\rangle$ be a skew PBW extension over $R$. If $\alpha=(\alpha_1,\dotsc, \alpha_n)\in \mathbb{N}^{n}$ and $r$ is an element of $R$, then  
{\footnotesize{\begin{align*}
x^{\alpha}r = &\ x_1^{\alpha_1}x_2^{\alpha_2}\dotsb x_{n-1}^{\alpha_{n-1}}x_n^{\alpha_n}r = x_1^{\alpha_1}\dotsb x_{n-1}^{\alpha_{n-1}}\biggl(\sum_{j=1}^{\alpha_n}x_n^{\alpha_{n}-j}\delta_n(\sigma_n^{j-1}(r))x_n^{j-1}\biggr)\\
+ &\ x_1^{\alpha_1}\dotsb x_{n-2}^{\alpha_{n-2}}\biggl(\sum_{j=1}^{\alpha_{n-1}}x_{n-1}^{\alpha_{n-1}-j}\delta_{n-1}(\sigma_{n-1}^{j-1}(\sigma_n^{\alpha_n}(r)))x_{n-1}^{j-1}\biggr)x_n^{\alpha_n}\\
+ &\ x_1^{\alpha_1}\dotsb x_{n-3}^{\alpha_{n-3}}\biggl(\sum_{j=1}^{\alpha_{n-2}} x_{n-2}^{\alpha_{n-2}-j}\delta_{n-2}(\sigma_{n-2}^{j-1}(\sigma_{n-1}^{\alpha_{n-1}}(\sigma_n^{\alpha_n}(r))))x_{n-2}^{j-1}\biggr)x_{n-1}^{\alpha_{n-1}}x_n^{\alpha_n}\\
+ &\ \dotsb + x_1^{\alpha_1}\biggl( \sum_{j=1}^{\alpha_2}x_2^{\alpha_2-j}\delta_2(\sigma_2^{j-1}(\sigma_3^{\alpha_3}(\sigma_4^{\alpha_4}(\dotsb (\sigma_n^{\alpha_n}(r))))))x_2^{j-1}\biggr)x_3^{\alpha_3}x_4^{\alpha_4}\dotsb x_{n-1}^{\alpha_{n-1}}x_n^{\alpha_n} \\
+ &\ \sigma_1^{\alpha_1}(\sigma_2^{\alpha_2}(\dotsb (\sigma_n^{\alpha_n}(r))))x_1^{\alpha_1}\dotsb x_n^{\alpha_n}, \ \ \ \ \ \ \ \ \ \ \sigma_j^{0}:={\rm id}_R\ \ {\rm for}\ \ 1\le j\le n.
\end{align*}}}
If $a_i, b_j\in R$ and $X_i:=x_1^{\alpha_{i1}}\dotsb x_n^{\alpha_{in}},\ Y_j:=x_1^{\beta_{j1}}\dotsb x_n^{\beta_{jn}}$, when we compute every summand of $a_iX_ib_jY_j$ we obtain pro\-ducts of the coefficient $a_i$ with several evaluations of $b_j$ in $\sigma$'s and $\delta$'s depending on the coordinates of $\alpha_i$. This assertion follows from the expression:

\begin{align*}
a_iX_ib_jY_j = &\ a_i\sigma^{\alpha_{i}}(b_j)x^{\alpha_i}x^{\beta_j} + a_ip_{\alpha_{i1}, \sigma_{i2}^{\alpha_{i2}}(\dotsb (\sigma_{in}^{\alpha_{in}}(b_j)))} x_2^{\alpha_{i2}}\dotsb x_n^{\alpha_{in}}x^{\beta_j} \\
+ &\ a_i x_1^{\alpha_{i1}}p_{\alpha_{i2}, \sigma_3^{\alpha_{i3}}(\dotsb (\sigma_{{in}}^{\alpha_{in}}(b_j)))} x_3^{\alpha_{i3}}\dotsb x_n^{\alpha_{in}}x^{\beta_j} \\
+ &\ a_i x_1^{\alpha_{i1}}x_2^{\alpha_{i2}}p_{\alpha_{i3}, \sigma_{i4}^{\alpha_{i4}} (\dotsb (\sigma_{in}^{\alpha_{in}}(b_j)))} x_4^{\alpha_{i4}}\dotsb x_n^{\alpha_{in}}x^{\beta_j}\\
+ &\ \dotsb + a_i x_1^{\alpha_{i1}}x_2^{\alpha_{i2}} \dotsb x_{i(n-2)}^{\alpha_{i(n-2)}}p_{\alpha_{i(n-1)}, \sigma_{in}^{\alpha_{in}}(b_j)}x_n^{\alpha_{in}}x^{\beta_j} \\
+ &\ a_i x_1^{\alpha_{i1}}\dotsb x_{i(n-1)}^{\alpha_{i(n-1)}}p_{\alpha_{in}, b_j}x^{\beta_j},
\end{align*}
where the polynomials $p$'s are given by Proposition \ref{coefficientes}.
\end{proposition}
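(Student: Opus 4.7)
The plan is to reduce everything to the fundamental relation $x_ir = \sigma_i(r)x_i + \delta_i(r)$ guaranteed by \cite[Proposition 3]{GallegoLezama2011}. Since we are merely sliding a coefficient $r\in R$ across the monomial $x_1^{\alpha_1}\dotsb x_n^{\alpha_n}$, the variables $x_i$ are never commuted past one another, so the non-commutativity encoded in condition (iv) of Definition \ref{gpbwextension} plays no role; this is what makes the formula as clean as it is.

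First I would establish the single-variable identity
\begin{equation*}
x_i^{k}r \;=\; \sigma_i^{k}(r)\,x_i^{k} + \sum_{j=1}^{k} x_i^{k-j}\,\delta_i(\sigma_i^{j-1}(r))\,x_i^{j-1}, \qquad k\ge 0,
\end{equation*}
by a short induction on $k$: the base case $k=1$ is the defining relation, and the inductive step multiplies on the left by $x_i$ and invokes $x_i\sigma_i^{k}(r) = \sigma_i^{k+1}(r)x_i + \delta_i(\sigma_i^{k}(r))$ to merge the new term into the sum after re-indexing. Then I would expand $x^{\alpha}r$ by peeling off the rightmost variable at each stage: the single-variable identity with $i=n$ splits $x_n^{\alpha_n}r$ into the ``clean'' piece $\sigma_n^{\alpha_n}(r)x_n^{\alpha_n}$ and a derivation tail, and prepending $x_1^{\alpha_1}\dotsb x_{n-1}^{\alpha_{n-1}}$ gives the first displayed summand. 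Applying the identity once more with $i=n-1$ to slide $x_{n-1}^{\alpha_{n-1}}$ past $\sigma_n^{\alpha_n}(r)$ produces the second summand, and iterating gives the entire expansion. The bookkeeping assertion that the argument of $\delta_k$ at level $k$ equals $\sigma_k^{j-1}(\sigma_{k+1}^{\alpha_{k+1}}(\dotsb(\sigma_n^{\alpha_n}(r))))$ is immediate, since by the time we reach level $k$ only the variables $x_{k+1},\dotsc,x_n$ have been moved past $r$.

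For $a_iX_ib_jY_j$, Proposition \ref{coefficientes}(1) gives $X_ib_j = \sigma^{\alpha_i}(b_j)x^{\alpha_i} + p_{\alpha_i, b_j}$ with $\deg(p_{\alpha_i, b_j}) < |\alpha_i|$, and the content of the second displayed formula is simply to identify $p_{\alpha_i, b_j}$ with the explicit sum produced by the peeling described above. Each row corresponds to one stage of that peeling, and the inner polynomial $p_{\alpha_{ik}, \,\sigma_{k+1}^{\alpha_{i(k+1)}}(\dotsb(\sigma_n^{\alpha_{in}}(b_j)))}$ is the low-degree remainder that Proposition \ref{coefficientes}(1) assigns to sliding $x_k^{\alpha_{ik}}$ past the element $\sigma_{k+1}^{\alpha_{i(k+1)}}(\dotsb(\sigma_n^{\alpha_{in}}(b_j)))$ of $R$. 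Multiplying on the left by $a_i$ and on the right by $Y_j$ yields the stated decomposition. The main potential pitfall is an off-by-one error in the composition of $\sigma$'s at each level, but as noted above this is settled by tracking exactly which variables have already been passed over the original coefficient.
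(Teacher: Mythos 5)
Your argument is correct: the single-variable identity and its induction check out, the ``peeling'' of $x^{\alpha}r$ from the rightmost variable reproduces each displayed summand with the right compositions of $\sigma$'s inside the $\delta$'s, and the identification of the rows of the $a_iX_ib_jY_j$ expansion with the remainders $p_{\alpha_{ik},\cdot}$ from Proposition \ref{coefficientes}(1) is exactly right. Note that the paper itself offers no proof of this proposition --- it is quoted verbatim from \cite[Proposition 2.7 and Remark 2.8]{ReyesRodriguez2021} --- but your derivation is the standard one underlying that reference, and your observation that condition (iv) of Definition \ref{gpbwextension} is never invoked (since no two indeterminates are ever transposed) is a correct and worthwhile clarification.
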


\subsection{\texorpdfstring{$(\Sigma, \Delta)$}\ -compatible and weak \texorpdfstring{$(\Sigma, \Delta)$}\ -compatible rings}\label{sigmarigidsigmadeltacompatible} 
As we said in the Introduction, following Annin \cite{Annin2004} (see also Hashemi and Moussavi \cite{HashemiMoussavi2005}), for an endomorphism $\sigma$ and a $\sigma$-derivation $\delta$ of a ring $R$, $R$ is said to be {\it $\sigma$-compatible} if for every $a, b \in R$, we have $ab = 0$ if and only if $a\sigma(b) = 0$ (necessarily, $\sigma$ is injective). $R$ is called {\it $\delta$-compatible} if for each $a, b \in R$, $ab = 0$ implies $a\delta(b) = 0$. If $R$ is both $\sigma$-compatible and $\delta$-compatible, then $R$ is said to be {\it $(\sigma, \delta)$-compatible}. From \cite[Lemma 3.3]{Hashemietal2003} we know that $(\sigma, \delta)$-compatible rings generalize the $\sigma$-rigid rings (an endomorphism $\sigma$ of a ring $R$ is called {\em rigid} if $r\sigma(r) = 0$ implies $r=0$, where $r\in R$; $R$ is said to be {\em rigid} if there exists a rigid endomorphism $\sigma$ of $R$) introduced by Krempa \cite{Krempa1996}. Thinking in the context of skew PBW extensions, Hashemi et al. \cite{HashemiKhalilAlhevaz2017} and Reyes and Su\'arez \cite {ReyesSuarezUMA2018} introduced independently the $(\Sigma, \Delta)$-{\em compatible rings} as a natural generalization of $(\sigma, \delta)$-compatible rings. Examples, ring and module theoretic properties of these structures can be found in \cite{Hamidizadehetal2020, HashemiKhalilAlhevaz2019, HashemiKhalilGhadiri2019, LouzariReyes2020, Reyes2019, ReyesSuarezSymmetry2021}.

\medskip

Since the notion of $(\sigma, \delta)$-compatible ring was used by Ouyang and Birkenmeier \cite{OuyangBirkenmeier2012} in their treatment about weak annihilators of Ore extensions, it is to be expected that we have to consider the notion of $(\Sigma,\Delta)$-compatible ring to characterize this type of annihilators in the setting of skew PBW extensions. In this way, next we consider a finite family of endomorphisms $\Sigma$ and a finite family $\Delta$ of $\Sigma$-derivations of a ring $R$.

\begin{definition}
[{\cite[Definition 3.1]{HashemiKhalilAlhevaz2017}; \cite[Definition 3.2]{ReyesSuarezUMA2018}}] 
$R$ is said to be {\it $\Sigma$-compatible} if for each $a, b \in R$, $a\sigma^{\alpha}(b) = 0$ if and only if $ab = 0$, where $\alpha \in \mathbb{N}^n$. $R$ is said to be {\it $\Delta$-compatible} if for each $a, b \in R$, it follows that $ab = 0$ implies $a\delta^{\beta}(b)=0$, where $\beta \in \mathbb{N}^n$. If $R$ is both $\Sigma$-compatible and $\Delta$-compatible, then $R$ is called {\it  $(\Sigma, \Delta)$-compatible}.
\end{definition}

The following proposition is the natural generalization of \cite[Lemma 2.1]{HashemiMoussavi2005}.

\begin{proposition}[{\cite[Lemma 3.3]{HashemiKhalilAlhevaz2017}; \cite[Proposition 3.8]{ReyesSuarezUMA2018}}]\label{Proposition3.8}
If $R$ is a $(\Sigma, \Delta)$-compatible ring, then for every elements $a, b$ belong to $R$, we have the following assertions:
\begin{enumerate}
\item [\rm (1)] If $ab=0$, then $a\sigma^{\theta}(b) = \sigma^{\theta}(a)b=0$, where $\theta\in \mathbb{N}^{n}$.
\item [\rm (2)] If $\sigma^{\beta}(a)b=0$, for some $\beta\in \mathbb{N}^{n}$, then $ab=0$.
\item [\rm (3)] If $ab=0$, then $\sigma^{\theta}(a)\delta^{\beta}(b)= \delta^{\beta}(a)\sigma^{\theta}(b) = 0$, where $\theta, \beta\in \mathbb{N}^{n}$.
\end{enumerate}
\end{proposition}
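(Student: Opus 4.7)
The plan is to prove the three assertions in order, using the definitions of $\Sigma$- and $\Delta$-compatibility together with the two structural facts that each $\sigma_i$ is an injective ring endomorphism (so every $\sigma^{\theta}$ is an injective ring homomorphism) and that each $\delta_i$ is a $\sigma_i$-derivation, satisfying the Leibniz identity $\delta_i(xy)=\sigma_i(x)\delta_i(y)+\delta_i(x)y$.

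Parts (1) and (2) are essentially direct unpackings of $\Sigma$-compatibility. For (1), the equality $a\sigma^{\theta}(b)=0$ is the defining implication of $\Sigma$-compatibility with $\alpha=\theta$. For the second equality, I would apply the ring homomorphism $\sigma^{\theta}$ to $ab=0$ to obtain $\sigma^{\theta}(a)\sigma^{\theta}(b)=0$, and then reuse $\Sigma$-compatibility on the pair $(\sigma^{\theta}(a),b)$ to conclude $\sigma^{\theta}(a)b=0$. For (2), the same compatibility applied to $(\sigma^{\beta}(a),b)$ with $\alpha=\beta$ gives $\sigma^{\beta}(a)\sigma^{\beta}(b)=\sigma^{\beta}(ab)=0$, and the injectivity of $\sigma^{\beta}$ then forces $ab=0$.

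For (3), the equality $\sigma^{\theta}(a)\delta^{\beta}(b)=0$ is immediate by chaining: part (1) produces $\sigma^{\theta}(a)b=0$, and $\Delta$-compatibility applied to this pair relocates $\delta^{\beta}$ onto $b$. The delicate half is $\delta^{\beta}(a)\sigma^{\theta}(b)=0$, since $\Delta$-compatibility only ever places a derivation on the right-hand factor. My plan is first to prove by induction on $|\beta|$ that $ab=0$ forces $\delta^{\beta}(a)b=0$; once this is in hand, (1) applied with $a$ replaced by $\delta^{\beta}(a)$ produces $\delta^{\beta}(a)\sigma^{\theta}(b)=0$. The base $|\beta|=0$ is trivial. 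For the step, decompose $\delta^{\beta}=\delta_i\circ\delta^{\gamma}$ with $i$ the smallest index at which $\beta$ is positive and $|\gamma|=|\beta|-1$, so that $\delta^{\gamma}(a)b=0$ by the inductive hypothesis. The Leibniz rule then gives
\[
0=\delta_i\bigl(\delta^{\gamma}(a)\,b\bigr)=\sigma_i(\delta^{\gamma}(a))\,\delta_i(b)+\delta_i(\delta^{\gamma}(a))\,b,
\]
and from $\delta^{\gamma}(a)b=0$ part (1) supplies $\sigma_i(\delta^{\gamma}(a))b=0$, whence $\Delta$-compatibility makes the first summand vanish. Consequently $\delta^{\beta}(a)b=\delta_i(\delta^{\gamma}(a))b=0$, as required.

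The main obstacle I anticipate is precisely this inductive step in (3), where the Leibniz rule must be blended simultaneously with both the $\Sigma$- and the $\Delta$-compatibility hypotheses and with the already-established part (1); the remaining verifications are routine consequences of unwinding the two definitions and using injectivity of $\sigma^{\theta}$.
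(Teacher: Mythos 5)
Your argument is correct: parts (1) and (2) are exactly the two directions of $\Sigma$-compatibility combined with the fact that $\sigma^{\theta}$ is an injective ring endomorphism, and the induction on $|\beta|$ using the Leibniz identity $\delta_i(xy)=\sigma_i(x)\delta_i(y)+\delta_i(x)y$ correctly handles the only non-routine claim, $\delta^{\beta}(a)b=0$. The paper itself states this proposition without proof, citing \cite[Lemma 3.3]{HashemiKhalilAlhevaz2017} and \cite[Proposition 3.8]{ReyesSuarezUMA2018}, and your proof is essentially the standard argument given in those references.
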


\begin{example}\label{ExampleF4}
Let $\mathbb{F}_4=\left \{0,1,a,a^2 \right \}$ be the field of four elements. Consider the ring of polynomials $\mathbb{F}_4[z]$ and let $R=\frac{\mathbb{F}_4[z]}{\left \langle z^2 \right \rangle}$. For simplicity, we identify the elements of $\mathbb{F}_4[z]$ with their images in $R$. Let $\Sigma=\left \{\sigma_{i,j} \right \}$ be the family of homomorphisms of $R$ defined by $\sigma_{i,j}(a)=a^i$ and $\sigma_{i,j}(z)=a^jz$, for $1 \leq i \leq 2$ and $0\leq j \leq 2$. Consider the skew PBW extension $A=\sigma(R)\left \langle x_{1,0},x_{1,1},x_{1,2},x_{2,0},x_{2,1},x_{2,2} \right \rangle$, where the indeterminates satisfy the commutation relations $x_{i,j}x_{i',j'}=x_{i',j'}x_{i,j}$, for all $1 \leq i,i' \leq 2$ and $0\leq j,j' \leq 2$. On the other hand, for $a^rz \in R$, we have that $x_{i,j}a^rz=\sigma_{i,j}(a^rz)x_{i,j}=(a^r)^ia^jzx_{i,j}=a^{ri+j}zx_{i,j}$, where $a^{ri+j} \in \mathbb{F}_4$, $1\leq i \leq 2$ and $1\leq r,j \leq 2$. This example can be extended to any finite field $\mathbb{F}_{p^n}$ with $p$ a prime number. It is straightforward to show that $R$ is $\Sigma$-compatible, and thus $A$ is a skew PBW extension over a $\Sigma$-compatible ring $R$. 
\end{example}

\begin{example}\label{ExampleMatrix}
 Let $\Bbbk[t]$ be the commutative polynomial ring over the field $\Bbbk$ in the indeterminate $t$. We consider the identity homomorphism  $\sigma$ of $\Bbbk[t]$ and $\delta(t):= 1$. Having in mind that $\Bbbk[t]$ is reduced, we can show that $\Bbbk[t]$ is a $(\sigma,\delta)$-compatible ring. Let $R$ be the ring defined as
\begin{center}
$\displaystyle R=\left \{ \begin{pmatrix} p(t) & q(t)\\0 &p(t) \end{pmatrix} \ \mid p(t),q(t) \in \Bbbk[t] \right \}$.   
\end{center}
The endomorphism $\sigma$ of $\Bbbk[t]$ is extended to the endomorphism $\overline{\sigma}: R \rightarrow R$ by defining $\overline{\sigma}((a_{ij}))=(\sigma(a_{ij}))$, and the $\sigma$-derivation $\delta$ of $\Bbbk[t]$ is also extended to $\overline{\delta}: R \rightarrow R$ by considering $\overline{\delta}((a_{ij}))=(\delta(a_{ij}))$. Notice that the Ore extension $R[x;\overline{\sigma},\overline{\delta}]$ is a skew PBW extension over $R$ which is $(\overline{\sigma},\overline{\delta})$-compatible.
\end{example}

The following definition presents the notion of compatibility for ideals of a ring.

\begin{definition}[{\cite[Definition 3.1]{HashemiKhalilAlhevaz2019}}]
Let $R$ be a $(\Sigma, \Delta)$-compatible ring where, as above, $\Sigma:=\left \{\sigma_1,\dots,\sigma_n\right\}$ is a finite family of endomorphisms of $R$ and $\Delta:=\left \{\delta_1,\dots ,\delta_n\right \}$ is a finite family of $\Sigma$-derivations of $R$.
\begin{enumerate}
    \item[{\rm (i)}] An ideal $I$ of $R$ is called a $\Sigma$-{\em ideal} if $\sigma^{\alpha}(I)\subseteq I$, where $\alpha \in \mathbb{N}^n$. $I$ is a $\Delta$-{\em ideal} if $\delta^{\alpha}(I)\subseteq I$, where $\alpha \in \mathbb{N}^n$. $I$ is $\Sigma$-{\em invariant} if $\sigma^{-\alpha}(I) =I$, where $\alpha \in \mathbb{N}^n$. If $I$ is both $\Sigma$ and $\Delta$-ideal, then $I$ is called a $(\Sigma,\Delta)$-{\em ideal}.
   \item[{\rm (ii)}] For an  ideal $I$ of $R$, we say that $I$ is $\Sigma$-{\em compatible} if for  each $a,b \in R$ and $\alpha \in \mathbb{N}^n$, $ab \in I$ if and only if $a\sigma^{\alpha}(b)\in I$. Moreover, $I$ is said to be $\Delta$-{\em compatible} ideal if for each $a,b\in R$ and $\alpha \in \mathbb{N}^n$, $ab \in I$ implies $a\delta^{\alpha}(b)\in I$. If $I$ is both $\Sigma$-compatible and $\Delta$-compatible, then we say that $I$ is $(\Sigma,\Delta)$-{\em compatible}. 
\end{enumerate}
\end{definition} 

Notice that $R$ is $\Sigma$-compatible (resp., $\Delta$-compatible) if and only if the zero ideal is $\Sigma$-compatible (resp., $\Delta$-compatible) ideal of $R$. If $R$ is a $(\Sigma, \Delta)$-compatible ring and $N(R)$ is an ideal of $R$, then, $N(R)$ is a $(\Sigma, \Delta)$-compatible ideal.

\begin{proposition}[{\cite[Proposition 3.3]{HashemiKhalilAlhevaz2019}}]\label{PropertynilR}
If $I$ is a $(\Sigma,\Delta)$-compatible ideal of $R$ and $a, b$ are elements of $R$, then the following assertions hold:
\begin{enumerate}
    \item[{\rm (1)}] If $ab \in I$, then $a\sigma^{\alpha}(b) \in I$ and $\sigma^{\alpha}(a)b \in I$, for each $\alpha \in \mathbb{N}^n$.
    \item[{\rm (2)}] If $ab \in I$, then $\sigma^{\alpha}(a)\delta^{\beta}(b),\ \delta^{\beta}(a)\sigma^{\alpha}(b) \in I$, for every $\alpha,\beta \in \mathbb{N}^n$.
\item[{\rm (3)}]  If $a\sigma^{\theta}(b)\in I$ or $\sigma^{\theta}(a)b \in I$, for some $\theta \in \mathbb{N}^n$, then $ab \in I$.
\end{enumerate}
\end{proposition}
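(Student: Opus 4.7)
The plan is to derive all three assertions from the two clauses in the definition of $(\Sigma,\Delta)$-compatibility of an ideal, using only the fact that each $\sigma^{\alpha}$ is a composition of ring endomorphisms (hence itself a ring endomorphism) and that each $\delta_i$ satisfies the $\sigma_i$-Leibniz rule $\delta_i(ab) = \delta_i(a)\sigma_i(b) + a\delta_i(b)$.

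For assertion (1), the implication $ab \in I \Rightarrow a\sigma^{\alpha}(b) \in I$ is immediate from the forward direction of the $\Sigma$-compatibility clause. For the companion implication $ab \in I \Rightarrow \sigma^{\alpha}(a)b \in I$, I would first specialize the $\Sigma$-compatibility iff to $a=1$ and $b$ replaced by $ab$, obtaining $\sigma^{\alpha}(ab) \in I$; since $\sigma^{\alpha}$ is a ring endomorphism this reads $\sigma^{\alpha}(a)\sigma^{\alpha}(b) \in I$. A second application of the $\Sigma$-compatibility iff, now with $\sigma^{\alpha}(a)$ playing the role of $a$, strips the $\sigma^{\alpha}$ off $b$ and yields $\sigma^{\alpha}(a)b \in I$. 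Assertion (3) is then just the reverse reading of the same iff: from $a\sigma^{\theta}(b) \in I$, $\Sigma$-compatibility gives $ab \in I$ directly; from $\sigma^{\theta}(a)b \in I$, the iff (with $\sigma^{\theta}(a)$ in place of $a$) produces $\sigma^{\theta}(a)\sigma^{\theta}(b) = \sigma^{\theta}(ab) \in I$, after which the $a=1$ specialization again backs this out to $ab \in I$.

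Assertion (2) is the delicate part. The first half, $\sigma^{\alpha}(a)\delta^{\beta}(b) \in I$, follows by chaining (1) with $\Delta$-compatibility: from $ab \in I$ part (1) delivers $\sigma^{\alpha}(a)b \in I$, and then $\Delta$-compatibility applied with $x=\sigma^{\alpha}(a)$ and $y=b$, iterated coordinate by coordinate, gives $\sigma^{\alpha}(a)\delta^{\beta}(b) \in I$. For the other half $\delta^{\beta}(a)\sigma^{\alpha}(b) \in I$, the key observation is a one-coordinate Leibniz trick: from $ab \in I$, $\Delta$-compatibility produces both $\delta_i(ab) \in I$ (taking $x=1$, $y=ab$) and $a\delta_i(b) \in I$ (taking $x=a$, $y=b$), so that the identity $\delta_i(a)\sigma_i(b) = \delta_i(ab) - a\delta_i(b)$ places $\delta_i(a)\sigma_i(b)$ in $I$.

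Starting from this, I would induct on $|\alpha|+|\beta|$, feeding the new product $\delta_i(a)\sigma_i(b) \in I$ back into (1) and into one-step $\Delta$-compatibility to peel off further factors of $\delta_j$ onto the left and $\sigma_k$ onto the right until the full $\delta^{\beta}(a)\sigma^{\alpha}(b)$ is realized. The main obstacle I anticipate is the bookkeeping needed to keep track of the cross-terms produced by iterated Leibniz expansions and to verify that each of them lies in $I$ via the inductive hypothesis; but once that bookkeeping is organized, every individual step reduces to a direct appeal to (1), to $\Sigma$-compatibility, or to $\Delta$-compatibility, with no further ingredients required.
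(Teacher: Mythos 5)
The paper does not actually prove this proposition; it is imported verbatim from Hashemi, Khalilnezhad and Alhevaz \cite{HashemiKhalilAlhevaz2019}, so there is no in-text argument to compare yours against. Judged on its own merits, your treatment of (1) and (3) is correct and complete: specializing the $\Sigma$-compatibility equivalence to $a=1$ so as to move $\sigma^{\alpha}$ onto the whole product, using that $\sigma^{\alpha}$ is multiplicative, and then stripping $\sigma^{\alpha}$ off one factor at a time is exactly what is needed, and (3) is indeed just the reverse reading of the same equivalences.

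The one genuine slip is the Leibniz identity you invoke for the second half of (2). With the paper's convention $x_ir=\sigma_i(r)x_i+\delta_i(r)$, the $\sigma_i$-derivations satisfy $\delta_i(ab)=\sigma_i(a)\delta_i(b)+\delta_i(a)b$, not $\delta_i(ab)=\delta_i(a)\sigma_i(b)+a\delta_i(b)$. The repair is immediate and in fact cleaner than the route you sketch: from $ab\in I$ one gets $\delta_i(ab)\in I$ (apply $\Delta$-compatibility to $1\cdot(ab)\in I$) and $\sigma_i(a)\delta_i(b)\in I$ (the first half of (2), already established), whence $\delta_i(a)b=\delta_i(ab)-\sigma_i(a)\delta_i(b)\in I$. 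Since this single step converts ``$ab\in I$'' into ``$\delta_i(a)b\in I$'' with the right-hand factor untouched, the iteration is a plain induction on $|\beta|$ yielding $\delta^{\beta}(a)b\in I$, with no accumulating cross-terms to track; a final application of (1) then gives $\delta^{\beta}(a)\sigma^{\alpha}(b)\in I$. So the bookkeeping you flag as the main obstacle largely evaporates once the Leibniz rule is written in the form matching the paper's convention.
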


Reyes and Su\'arez \cite{ReyesSuarez2019-2} defined the concept of {\em weak} $(\Sigma, \Delta)$-{\em compatible ring} as a generalization of $(\Sigma, \Delta)$-compatible rings and {\em weak} $(\sigma, \delta)$-{\em compatible rings} introduced by Ouyang and Liu \cite{LunquenJingwang2011}. For the next definition, consider again a finite family of endomorphisms $\Sigma$ and a finite family $\Delta$ of $\Sigma$-derivations of a ring $R$.

\begin{definition}
[{\cite[Definition 4.1]{ReyesSuarez2019-2}}] $R$ is said to be {\it weak $\Sigma$-compatible} if for each $a, b \in R$, $a\sigma^{\alpha}(b)\in N(R)$ if and only if $ab \in N(R)$, where $\alpha \in \mathbb{N}^n$. $R$ is said to be {\it weak $\Delta$-compatible} if for each $a, b \in R$, $ab \in N(R)$ implies $a\delta^{\beta}(b)\in N(R)$, where $\beta \in \mathbb{N}^n$. If $R$ is both weak $\Sigma$-compatible and weak $\Delta$-compatible, then $R$ is called {\it weak $(\Sigma, \Delta)$-compatible}.
\end{definition}

It is clear that the following result extends Proposition \ref{Proposition3.8}.

\begin{proposition} [{\cite[Proposition 4.2]{ReyesSuarez2019-2}}] If $R$ is a weak $(\Sigma,\Delta)$-compatible ring, then the following assertions hold:
\begin{itemize}
    \item[(1)] If $ab \in N(R)$, then $a\sigma^{\alpha}(b), \sigma^{\beta}(a)b \in N(R)$, for all $\alpha, \beta \in \mathbb{N}^n$.
    \item[(2)] If $\sigma^{\alpha}(a)b \in N(R)$, for some element $\alpha \in \mathbb{N}^n$, then $ab \in N(R)$.
    \item[(3)] If $a\sigma^{\beta}(b) \in N(R)$, for some element $\beta \in \mathbb{N}^n$, then $ab \in N(R)$.
    \item[(4)] If $ab \in N(R)$, then $\sigma^{\alpha}(a)\delta^{\beta}(b), \delta^{\beta}(a)\sigma^{\alpha}(b) \in N(R)$, for $\alpha, \beta \in \mathbb{N}^n$. 
\end{itemize}
\end{proposition}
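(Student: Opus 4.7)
The plan is to handle the four assertions in sequence, exploiting two facts throughout: each $\sigma_i$ is an injective ring endomorphism of $R$, so the composition $\sigma^{\alpha}$ is an injective ring endomorphism and in particular preserves and reflects nilpotence; and the biconditional defining weak $\Sigma$-compatibility, together with the implication defining weak $\Delta$-compatibility, may be read for any pair $(c,d)$ of elements of $R$.

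Assertion (3) is immediate, being exactly the ``if'' half of weak $\Sigma$-compatibility. For (1), the relation $a\sigma^{\alpha}(b)\in N(R)$ is the other half of weak $\Sigma$-compatibility applied to $(a,b)$; for $\sigma^{\beta}(a)b\in N(R)$, I would first apply the ring endomorphism $\sigma^{\beta}$ to a power $(ab)^n=0$ to obtain $\sigma^{\beta}(a)\sigma^{\beta}(b)=\sigma^{\beta}(ab)\in N(R)$, and then invoke weak $\Sigma$-compatibility on the pair $(\sigma^{\beta}(a),b)$ to replace $\sigma^{\beta}(b)$ by $b$. Assertion (2) dualizes this: weak $\Sigma$-compatibility on $(\sigma^{\alpha}(a),b)$ rewrites the hypothesis as $\sigma^{\alpha}(a)\sigma^{\alpha}(b)=\sigma^{\alpha}(ab)\in N(R)$, and injectivity of $\sigma^{\alpha}$ forces $ab\in N(R)$, since $\sigma^{\alpha}((ab)^n)=0$ implies $(ab)^n=0$.

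For (4), the first statement is direct: weak $\Delta$-compatibility converts $ab\in N(R)$ into $a\delta^{\beta}(b)\in N(R)$, and part (1) applied to the pair $(a,\delta^{\beta}(b))$ then produces $\sigma^{\alpha}(a)\delta^{\beta}(b)\in N(R)$. The second statement, $\delta^{\beta}(a)\sigma^{\alpha}(b)\in N(R)$, is the delicate point. I would argue by induction on $|\beta|$, the base case $|\beta|=0$ being supplied by (1). For the inductive step $\beta \mapsto \beta+e_i$, I would apply the $\sigma_i$-derivation identity $\delta_i(xy)=\sigma_i(x)\delta_i(y)+\delta_i(x)y$ to a vanishing power $(ab)^n=0$ and expand; the resulting identity splits into terms that push $\delta_i$ onto the $b$-factor---already absorbed into $N(R)$ by weak $\Delta$-compatibility combined with the already-proved parts (1) and (3)---and terms that push $\delta_i$ onto the $a$-factor. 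Isolating the latter modulo $N(R)$ and combining with the inductive hypothesis then forces $\delta^{\beta+e_i}(a)\sigma^{\alpha}(b)\in N(R)$.

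The principal obstacle is precisely this last step. The weak $\Delta$-compatibility axiom is asymmetric, applying $\delta$ only to the right-hand factor, while the target conclusion places $\delta^{\beta}$ on the left. In the non-weak $(\Sigma,\Delta)$-compatible setting this transfer is trivial because $ab=0$ yields $\delta(ab)=0$ and the derivation rule immediately rearranges the two summands. In the weak setting, however, $\delta_i((ab)^n)=0$ is a genuine telescoping sum of mixed $\sigma\!-\!\delta$ products, and one must lean on weak $\Sigma$-compatibility together with the fact that $\sigma_i$ transports nilpotence (via Proposition~\ref{coefficientes}-style expansions) to absorb all cross terms into $N(R)$ before the desired term can be isolated. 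This bookkeeping, rather than any conceptually new ingredient, is the real work of the proof.
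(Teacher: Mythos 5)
The paper itself contains no proof of this proposition: it is quoted verbatim from \cite[Proposition 4.2]{ReyesSuarez2019-2}, so the only comparison available is with the standard argument of that source. Your treatment of (1), (2), (3) and of the first membership in (4) is correct and is essentially that argument: (3) and the first half of (1) are the two directions of the weak $\Sigma$-compatibility biconditional; the second half of (1) follows by applying the ring endomorphism $\sigma^{\beta}$ to a vanishing power of $ab$ and then reading the biconditional at the pair $(\sigma^{\beta}(a),b)$; and (2) reverses this. One small remark on (2): you invoke injectivity of $\sigma^{\alpha}$ to pass from $\sigma^{\alpha}(ab)\in N(R)$ to $ab\in N(R)$. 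Injectivity holds when $\Sigma$ arises from a skew PBW extension but is not part of the definition of weak $\Sigma$-compatibility; it is also unnecessary, since the instance $a=1$ of the biconditional already gives $\sigma^{\alpha}(c)\in N(R)\Leftrightarrow c\in N(R)$.

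The genuine gap is in the second membership of (4), and it is twofold. First, expanding $\delta_i\bigl((ab)^n\bigr)=0$ by the twisted Leibniz rule yields a telescoping sum $\sum_{k}\sigma_i(ab)^{k}\,\delta_i(ab)\,(ab)^{n-1-k}=0$ whose summands sandwich $\delta_i(a)b$ between powers of $ab$ and $\sigma_i(ab)$; knowing that such products lie in $N(R)$ does not let you isolate the nilpotence of $\delta_i(a)b$ itself. The detour through the $n$-th power is also unnecessary: weak $\Delta$-compatibility applied to the pair $(1,ab)$ gives $\delta_i(ab)\in N(R)$ directly, after which one application of $\delta_i(ab)=\sigma_i(a)\delta_i(b)+\delta_i(a)b$, combined with $\sigma_i(a)\delta_i(b)\in N(R)$ from the first half of (4), isolates $\delta_i(a)b$. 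Second --- and this affects both your version and the streamlined one --- the step ``isolating the latter modulo $N(R)$'' presupposes that $N(R)$ is closed under subtraction, i.e., that $R$ is NI (or at least that $N(R)$ is an additive subgroup). That hypothesis appears nowhere in your argument and is not visible in the statement as quoted, but it is the standing assumption under which this circle of results is proved in \cite{LunquenJingwang2011} and \cite{ReyesSuarez2019-2} (note that the companion result, Proposition \ref{ReyesSuarez2019-2Theorem4.6}, explicitly adds ``NI''). Without it your induction on $|\beta|$ cannot close; with it, the induction is fine, since $\delta_i(a)b\in N(R)$ feeds back into part (1) and into the next derivation.
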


The next example shows that there exists a weak $(\Sigma, \Delta)$-compatible ring which is not $(\Sigma, \Delta)$-compatible.

\begin{example}\label{ExampleWeak}
Let $R$ be a ring and $M$ an $(R,R)$-bimodule. \textit{The trivial extension of R by M} is defined as the ring $T(R,M):= R\oplus M$ with the usual addition and the multiplication defined as $(r_1,m_1)(r_2,m_2):=(r_1r_2, r_1m_2 + m_1r_2)$, for all $r_1,r_2 \in R$ and $m_1,m_2 \in M$. Notice that $T(R,M)$ is isomorphic to the matrix ring (with the usual matrix operations) of the form $\bigl(\begin{smallmatrix}r & m\\ 0 & r \end{smallmatrix}\bigr)$, where $r\in R$ and $m \in M$. In particular, we call ${\rm S}_2(\mathbb{Z})$ the ring of matrices isomorphic to $T(\mathbb{Z},\mathbb{Z})$.

Consider the ring ${\rm S}_2(\mathbb{Z})$ given by
\begin{center}
    ${\rm S}_2(\mathbb{Z})= \left \{ \begin{pmatrix}a & b\\ 0 & a \end{pmatrix} \mid a,b \in \mathbb{Z} \right \}$.
\end{center}
Let $\sigma_1={\rm id}_{{\rm S}_2(\mathbb{Z})}$ be the identity endomorphism of ${\rm S}_2(\mathbb{Z})$, and let $\sigma_2$ and $\sigma_3$ be the two endomorphisms of ${\rm S}_2(\mathbb{Z})$ defined by
\begin{center}
    $\sigma_2\left ( \begin{pmatrix}a & b\\ 0 & a \end{pmatrix} \right )= \begin{pmatrix}a & -b\\0 & a\end{pmatrix},\quad {\rm and}\quad \sigma_3\left ( \begin{pmatrix}a & b\\ 0 & a \end{pmatrix} \right )= \begin{pmatrix}a & 0\\0 & a\end{pmatrix}$,
\end{center}
respectively. Notice that the set of nilpotent elements of ${\rm S}_2(\mathbb{Z})$ is given by 
\begin{center}
    $N({\rm S}_2(\mathbb{Z}))= \left \{ \begin{pmatrix}0 & b\\ 0 & 0 \end{pmatrix} \mid b \in \mathbb{Z} \right \}$.
\end{center}
It is straightforward to see that ${\rm S}_2(\mathbb{Z})$ is weak $\Sigma$-compatible where $\Sigma = \left \{\sigma_1,\sigma_2,\sigma_3 \right \}$, but ${\rm S}_2(\mathbb{Z})$ is not $\sigma_3$-compatible. For example, if we take the matrices $C,D \in {\rm S}_2(\mathbb{Z})$ given by 
\begin{center}
    $C = \begin{pmatrix} 1 & 1\\ 0& 1\end{pmatrix},\ \ D = \begin{pmatrix} 0 & 1\\ 0 & 0 \end{pmatrix},$
\end{center}
then it is clear that $C\sigma_3(D)=0$ but $CD = \begin{pmatrix} 0 & 1\\ 0&
0\end{pmatrix}\neq 0$, whence ${\rm S}_2(\mathbb{Z})$ is not $\Sigma$-compatible.
\end{example}

Ouyang and Liu \cite[Lemma 2.13]{LunquenJingwang2011} characterized the nilpotent elements in an Ore extension over a weak $(\sigma, \delta)$-compatible and NI ring. Since skew PBW extensions generalize Ore extensions of injective type, Reyes and Su\'arez \cite{ReyesSuarez2019-2} extended this result as the following proposition shows. The NI property for skew PBW extensions was recently studied by Su\'arez et al. \cite{SuarezChaconReyes2022}. 

\begin{proposition}[{\cite[Theorem 4.6]{ReyesSuarez2019-2}}]\label{ReyesSuarez2019-2Theorem4.6} If $A = \sigma(R)\langle x_1,\dotsc, x_n\rangle$ is a skew PBW extension over a weak $(\Sigma,\Delta)$-compatible and NI ring $R$, then $f = a_0 + a_1X_1 + \dotsb + a_mX_m\in N(A)$ if and only if $a_i\in N(R)$, for every $i$.	
\end{proposition}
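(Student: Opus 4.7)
The plan is to pass to the quotient $A/N(R)A$ and exploit that this quotient is a reduced ring, handling each direction of the biconditional separately.

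First, I would verify that $N(R)$ is a $(\Sigma,\Delta)$-ideal of $R$: it is a two-sided ideal because $R$ is NI, and setting $a=1$ in the weak $\Sigma$- and $\Delta$-compatibility conditions gives $\sigma^{\alpha}(N(R))\subseteq N(R)$ and $\delta^{\beta}(N(R))\subseteq N(R)$ for all multi-indices. Applying the explicit multiplication formula of Proposition \ref{Reyes2015Proposition2.9}, the product $X_{\alpha}\cdot b$ with $b\in N(R)$ expands into a polynomial whose coefficients are $\sigma$- and $\delta$-evaluations of $b$, all of which still lie in $N(R)$; hence $N(R)A$ is a two-sided ideal of $A$. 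Moreover, $R/N(R)$ is reduced (by NI) and weak compatibility descends to honest $(\bar\Sigma,\bar\Delta)$-compatibility on $R/N(R)$, so $A/N(R)A$ is canonically isomorphic to the skew PBW extension $\sigma(R/N(R))\langle \bar x_1,\dots,\bar x_n\rangle$ with respect to the induced data.

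For the ``only if'' direction, I would invoke the known fact (already present in the literature on skew PBW extensions over reduced compatible rings, generalizing the Hashemi--Moussavi result for Ore extensions) that such an extension is reduced. Hence $A/N(R)A$ has no nonzero nilpotents; if $f\in N(A)$, then the class $\bar f$ is nilpotent in a reduced ring and must vanish, forcing $f\in N(R)A$ and therefore $a_i\in N(R)$ for every $i$.

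For the ``if'' direction, the task reduces to showing $N(R)A$ is a nil ideal. Given $f=\sum_{i=0}^m a_iX_i$ with every $a_i\in N(R)$, repeated use of Proposition \ref{Reyes2015Proposition2.9} shows that each coefficient of $f^k$ is a sum of products of $k$ elements drawn from $N(R)$ (each factor is either some $a_i$ or a $\sigma^{\gamma},\delta^{\gamma'}$-image of one, all of which remain in the nil ideal). An induction on the number of nonzero terms of $f$, combined with the nilpotency indices of the individual $a_i$'s and weak compatibility used to commute coefficients past monomials, then yields $f^k=0$ for $k$ sufficiently large. I expect this last step, controlling noncommutative products within a nil (possibly non-nilpotent) ideal, to be the main obstacle; weak compatibility is what makes it tractable, since it confines all intermediate coefficients to $N(R)$ and restricts the relevant $\sigma^{\gamma},\delta^{\gamma'}$-images to the finite orbit of $\{a_0,\dots,a_m\}$ that actually appears in the expansion of $f^k$.
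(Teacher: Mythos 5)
The paper itself offers no proof of this statement: it is quoted directly from \cite[Theorem 4.6]{ReyesSuarez2019-2}, so there is no internal argument to compare yours against. On its own terms, the first half of your plan is sound. Specializing the weak compatibility conditions at $a=1$ does show that $N(R)$ is a $(\Sigma,\Delta)$-ideal; together with $N(R)$ being a two-sided ideal (since $R$ is NI) and the expansion of Proposition \ref{Reyes2015Proposition2.9}, this makes $N(R)A$ a two-sided ideal with $A/N(R)A$ a skew PBW extension over the reduced, genuinely $(\bar\Sigma,\bar\Delta)$-compatible ring $R/N(R)$ (the structure constants survive the quotient because the $d_{i,j}$ are left invertible, hence not nilpotent, and $\sigma_i(r)\in N(R)$ iff $r\in N(R)$). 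The reducedness of such an extension is indeed available in the literature (compatible plus reduced is $\Sigma$-rigid, and skew PBW extensions of $\Sigma$-rigid rings are reduced, cf. \cite{ReyesSuarezUMA2018}), so the ``only if'' direction goes through: a nilpotent $f$ maps to a nilpotent, hence zero, element of the reduced quotient, forcing every $a_i$ into $N(R)$.

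The ``if'' direction, however, contains a genuine gap, precisely at the step you flag as the main obstacle. Your argument reduces to showing that $N(R)A$ is nil, and you propose to get $f^k=0$ from the observation that every coefficient of $f^k$ is a sum of products of $k$ elements of $N(R)$, controlled by ``the nilpotency indices of the individual $a_i$'' and an induction on the number of terms. This conflates nil with nilpotent: since $R$ is only NI, $N(R)=N^{*}(R)$ is merely a nil ideal, and products of arbitrarily many elements of a nil ideal need not vanish for any fixed length (finitely generated nil rings need not be nilpotent). Moreover, the factors actually occurring in $f^k$ are not the finitely many $a_i$ but their images under arbitrary compositions $\sigma^{\gamma}\delta^{\gamma'}$, an infinite family, so no finite collection of nilpotency indices bounds anything. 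Weak compatibility only ever returns \emph{membership} in $N(R)$, never an actual zero: already for a single term $f=a_1x_1$ with $a_1^2=0$ one gets $(a_1x_1)^2=a_1\sigma_1(a_1)x_1^2$ with $a_1\sigma_1(a_1)\in N(R)$ but not necessarily $0$, and iterating produces longer products $a_1\sigma_1(a_1)\sigma_1^2(a_1)\cdots$ that your argument gives no reason to annihilate; so the induction on the number of nonzero terms does not even start at one term. This inclusion $N(R)A\subseteq N(A)$ is exactly the nontrivial content of the cited Theorem 4.6, and as written your sketch does not establish it; you would need either to cite that result for this direction as well or to supply a genuinely different mechanism for killing these twisted products.
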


Notice that Proposition \ref{ReyesSuarez2019-2Theorem4.6} also generalizes the results presented by Ouyang and Birkenmeier for Ore extensions \cite[Lemma 2.6]{OuyangBirkenmeier2012}, and Ouyang and Liu for differential polynomials rings \cite[Lemma 2.12]{OuyangLiu2012}. 
Furthermore, since weak $(\Sigma,\Delta)$-compatible rings contain strictly $(\Sigma,\Delta)$-compatible rings, we have immediately the following corollary for skew PBW extensions over $(\Sigma,\Delta)$-compatible rings.

\begin{corollary}\label{corollary2.16} If $A = \sigma(R)\langle x_1,\dotsc, x_n\rangle$ is a skew PBW extension over a $(\Sigma,\Delta)$-compatible and NI ring $R$, then the following statements hold:
\begin{enumerate}
\item [\rm (1)] $N(A)$ is an ideal of $A$ and $N(A) = N(R)\langle x_1,\dotsc, x_n\rangle$. \label{MSc2.3.20}
\item [\rm (2)] $f = a_0 + a_1X_1 + \dotsb + a_mX_m\in N(A)$ if and only if $a_i\in N(R)$, for every $i = 0, \dotsc, m$. \label{MSc2.3.21}
\end{enumerate}
\end{corollary}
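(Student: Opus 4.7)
The plan is to derive this corollary as a direct consequence of Proposition \ref{ReyesSuarez2019-2Theorem4.6}, using the observation already recorded in the excerpt that every $(\Sigma,\Delta)$-compatible ring is weak $(\Sigma,\Delta)$-compatible. To justify this containment rigorously I would invoke the fact, noted after the definition of $(\Sigma,\Delta)$-compatible ideal, that when $R$ is $(\Sigma,\Delta)$-compatible and $R$ is NI (so $N(R)$ is an ideal), the ideal $N(R)$ is itself $(\Sigma,\Delta)$-compatible. Unwinding the definition of $(\Sigma,\Delta)$-compatible ideal for $I = N(R)$ gives precisely that $ab\in N(R)\Leftrightarrow a\sigma^{\alpha}(b)\in N(R)$ and $ab\in N(R)\Rightarrow a\delta^{\beta}(b)\in N(R)$, which is the defining property of weak $(\Sigma,\Delta)$-compatibility. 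This is the tiny verification that must not be skipped.

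Given that, statement (2) is immediate from Proposition \ref{ReyesSuarez2019-2Theorem4.6}: $f = a_0 + a_1X_1 + \cdots + a_mX_m\in N(A)$ if and only if every $a_i$ lies in $N(R)$.

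For statement (1), the equality $N(A) = N(R)\langle x_1,\dotsc,x_n\rangle$ is just a reformulation of (2) in light of the notation $BA$ introduced in Section \ref{SPBW}. What remains is to verify that this set is a two-sided ideal of $A$. Closure under sums is obvious since $N(R)$ is an ideal of the NI ring $R$. For closure under multiplication, I would expand $fh$ and $hf$ using Proposition \ref{Reyes2015Proposition2.9}: every coefficient of either product is a sum of terms of the form $u \cdot (\text{iterated applications of }\sigma_i\text{'s and }\delta_i\text{'s to }v)$, where one of $u,v$ belongs to $N(R)$ and the other to $R$. Because $N(R)$ is $(\Sigma,\Delta)$-compatible as an ideal, applying $\sigma^{\alpha}$ or $\delta^{\alpha}$ keeps elements inside $N(R)$ (take $a=1$ in the definition), and then the ideal property of $N(R)$ finishes the job on either side.

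The only place that deserves care — and which I regard as the main, though modest, obstacle — is bookkeeping the products obtained from Proposition \ref{Reyes2015Proposition2.9}: each cross term $a_i X_i b_j Y_j$ unfolds into a sum of expressions in which $\sigma$'s and $\delta$'s are interleaved in a way that depends on the multi-index $\alpha_i$. One has to argue uniformly that in every such summand the nilpotent factor is preserved (either because $a_i \in N(R)$ and $N(R)$ is a left ideal, or because $b_j \in N(R)$ and the $\sigma^{\alpha},\delta^{\alpha}$ send $N(R)$ into itself, after which $N(R)$ being a right ideal absorbs the outside factor). Once this accounting is done, (1) follows and the corollary is proved.
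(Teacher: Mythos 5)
Your proposal is correct and follows essentially the same route as the paper, which derives the corollary immediately from Proposition \ref{ReyesSuarez2019-2Theorem4.6} via the observation that every $(\Sigma,\Delta)$-compatible NI ring is weak $(\Sigma,\Delta)$-compatible (because $N(R)$ is then a $(\Sigma,\Delta)$-compatible ideal). Your extra bookkeeping for the ideal property of $N(A)$ in part (1) is exactly the verification the paper leaves implicit, so there is nothing to correct.
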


Corollary \ref{corollary2.16} generalizes the corresponding theorem for Ore extensions of injective type over $(\sigma,\delta)$-compatible and $2$-primal rings \cite[Corollary 2.2]{OuyangBirkenmeier2012}, and the result for differential polynomials rings over $\delta$-compatible reversible rings \cite[Corollary 2.13]{OuyangLiu2012}.

\medskip

Ouyang et al. \cite{Lunqunetal2013} introduced the notion of skew $\pi$-Armendariz ring in the following way. If $R$ is a ring with an endomorphism $\sigma$ and a $\sigma$-derivation $\delta$, then $R$ is called {\em skew $\pi$-Armendariz ring} if for polynomials $f(x) = \sum_{i=0}^{l} a_ix^i$ and $g(x) = \sum_{j=0}^{m} b_jx^j$ in $R[x; \sigma, \delta]$, $f(x)g(x) \in N(R[x; \sigma, \delta])$ implies that $a_ib_j \in N(R)$, for each $0 \leq i \leq l$ and $0 \leq j \leq m$. Skew $\pi$-Armendariz rings are more general than skew Armendariz rings when the ring of coefficients is $(\sigma,\delta)$-compatible \cite[Theorem 2.6]{Lunqunetal2013}, and also extend $\sigma$-Armendariz rings defined by Hong et al. \cite{Hongetal2006} considering $\delta$ as the zero derivation. For a detailed description of Armendariz rings in the commutative and noncommutative context, including skew PBW extensions, see \cite{Armendariz1974, BirkenmeierParkRizvi2013, HongKimKwak2000}.

\medskip

Ouyang and Liu \cite{LunquenJingwang2011} showed that if $R$ is a weak $(\sigma,\delta)$-compatible and NI ring, then $R$ is skew $\pi$-Armendariz ring  \cite[Corollary 2.15]{LunquenJingwang2011}. Reyes \cite{Reyes2018} formulated the analogue of skew $\pi$-Armendariz ring in the setting of skew PBW extensions. For a skew PBW extension $A = \sigma(R)\langle x_1,\dotsc, x_n\rangle$ over a ring $R$, $R$ is said to be {\em skew} $\Pi$-{\em Armendariz ring} if for elements $f = \sum_{i=0}^l a_iX_i$ and $g = \sum_{j=0}^m b_jY_j$ belong to $A$, $fg \in N(A)$ implies $a_ib_j \in N(R)$, for each $0 \leq i \leq l$ and $0 \leq j \leq m$. Reyes \cite[Theorem 3.10]{Reyes2018} showed that if $R$ is reversible (following Cohn \cite[p. 641]{Cohn1999}, $R$ is called {\em reversible} if $ab = 0$ implies $ba = 0$, where $a, b\in R$) and $(\Sigma,\Delta)$-compatible, then $R$ is skew $\Pi$-Armendariz ring. Related to this result, the following proposition generalizes \cite[Lemma 2.14]{OuyangLiu2012} and \cite[Corollary 2.3]{OuyangBirkenmeier2012}. 
 
\begin{proposition}[{\cite[Theorem 4.7]{ReyesSuarez2019-2}}]\label{ReyesSuarez2019-2Theorem 4.7}
Let $A = \sigma(R)\langle x_1,\dotsc, x_n\rangle$ be a skew PBW extension over a weak $(\Sigma,\Delta)$-compatible and NI ring $R$. If $f = a_0 + a_1X_1 + \dotsb + a_mX_m\in N(A)$ and $g = b_0 + b_1Y_1 + \dotsb + b_tY_t$ are elements of $A$, then $fg\in N(A)$ if and only if $a_ib_j\in N(R)$, for all $i, j$.
\end{proposition}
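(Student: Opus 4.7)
The plan is to use Proposition \ref{ReyesSuarez2019-2Theorem4.6} to translate membership in $N(A)$ into the condition that every coefficient (in the standard basis of $A$) lies in $N(R)$, then combine this with the expansion formula of Proposition \ref{Reyes2015Proposition2.9} and the weak $(\Sigma,\Delta)$-compatibility properties \cite[Proposition 4.2]{ReyesSuarez2019-2}. The two directions of the biconditional are handled separately.

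For the ``if'' direction, I would expand $fg=\sum_{i,j}a_iX_ib_jY_j$ using Proposition \ref{Reyes2015Proposition2.9}. Every coefficient arising in this expansion has the form $a_i\,c_{ij}$, where $c_{ij}$ is obtained from $b_j$ by a finite sequence of applications of $\sigma_k$'s and $\delta_k$'s (arising from the $p$-polynomials of Proposition \ref{coefficientes}), possibly multiplied on the right by some element of $R$ such as a $d_{\alpha,\beta}$. Starting from $a_ib_j\in N(R)$ and iterating items (1) and (4) of the weak compatibility proposition, each $a_i\,c_{ij}$ lies in $N(R)$; right-multiplying by an element of $R$ keeps it in $N(R)$, because $R$ is NI and $N(R)$ is therefore an ideal. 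Summing over $(i,j)$, every coefficient of $fg$ lies in $N(R)$, and Proposition \ref{ReyesSuarez2019-2Theorem4.6} gives $fg\in N(A)$.

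For the ``only if'' direction, I would proceed by induction on a well-order of the coefficient pairs $(i,j)$ refining the monomial order on ${\rm Mon}(A)$ (with $X_m\succ\cdots\succ X_0$, $Y_t\succ\cdots\succ Y_0$, and $(m,t)$ as top element). By Proposition \ref{ReyesSuarez2019-2Theorem4.6} every coefficient of $fg$ lies in $N(R)$, and in particular the coefficient of the leading monomial $x^{\alpha_m+\beta_t}$ equals $a_m\sigma^{\alpha_m}(b_t)d_{\alpha_m,\beta_t}\in N(R)$. Since $R$ is NI, the quotient $R/N(R)$ is reduced, and the left-invertibility of $d_{\alpha_m,\beta_t}$ permits right-cancellation in the quotient: from $\overline{r}\,\overline{d}=0$ and $\overline{e}\,\overline{d}=1$ one gets $(\overline{d}\,\overline{r})^2=\overline{d}(\overline{r}\,\overline{d})\overline{r}=0$, forcing $\overline{d}\,\overline{r}=0$ and then $\overline{r}=\overline{e}\,\overline{d}\,\overline{r}=0$. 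Hence $a_m\sigma^{\alpha_m}(b_t)\in N(R)$, and weak $\Sigma$-compatibility yields $a_mb_t\in N(R)$. In the inductive step, assuming $a_{i'}b_{j'}\in N(R)$ for all $(i',j')$ strictly above $(i,j)$, one examines the coefficient of $x^{\alpha_i+\beta_j}$ in $fg$: by Proposition \ref{Reyes2015Proposition2.9} it decomposes as $a_i\sigma^{\alpha_i}(b_j)d_{\alpha_i,\beta_j}$ plus terms of the form $a_{i'}\sigma^{\gamma}\delta^{\theta}(b_{j'})$ coming from the $p$-polynomials of already-treated pairs; by the inductive hypothesis and the weak compatibility proposition, each such term lies in $N(R)$, so $a_i\sigma^{\alpha_i}(b_j)d_{\alpha_i,\beta_j}\in N(R)$, and the reduced-quotient argument again yields $a_ib_j\in N(R)$.

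The main obstacle is combinatorial: at each stage of the induction, the coefficient of $x^{\alpha_i+\beta_j}$ in $fg$ receives contributions not only from $a_iX_ib_jY_j$ but also from the lower-order terms of products $a_{i'}X_{i'}b_{j'}Y_{j'}$ whose leading monomials are larger. In the Ore extension setting \cite{OuyangBirkenmeier2012, OuyangLiu2012} this bookkeeping is tractable because the $p$-polynomials have a clean binomial form in a single variable; in the skew PBW setting, Proposition \ref{Reyes2015Proposition2.9} generates many mixed terms across several variables involving iterated compositions of $\sigma_k$'s and $\delta_k$'s, and verifying that every such contribution is absorbed by $N(R)$ via the weak compatibility hypotheses is the technical heart of the argument.
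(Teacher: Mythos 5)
The paper does not actually prove this proposition --- it is quoted from \cite[Theorem 4.7]{ReyesSuarez2019-2} --- so I can only assess your argument on its own terms. (Note also that the printed hypothesis ``$f\in N(A)$'' would trivialize the statement via Proposition \ref{ReyesSuarez2019-2Theorem4.6}; like you, I read $f$ as an arbitrary element of $A$.) Your ``if'' direction is sound: every standard-basis coefficient of $fg$ is a sum of terms $(a_i w)r$ with $w$ an iterated image of $b_j$ under the $\sigma_k$'s and $\delta_k$'s and $r\in R$, and iterating items (1) and (4) of the weak-compatibility proposition, together with the fact that $N(R)$ is an ideal because $R$ is NI, places each such term in $N(R)$; Proposition \ref{ReyesSuarez2019-2Theorem4.6} then gives $fg\in N(A)$. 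The base case of your ``only if'' direction is also correct, including the cancellation of the left-invertible $d_{\alpha_m,\beta_t}$ in the reduced quotient $R/N(R)$.

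The genuine gap is in the inductive step of the ``only if'' direction. Your induction assumes that, once every pair with strictly larger leading monomial has been treated, the coefficient of $x^{\alpha_i+\beta_j}$ in $fg$ isolates the single term $a_i\sigma^{\alpha_i}(b_j)d_{\alpha_i,\beta_j}$ modulo quantities already known to lie in $N(R)$. This fails whenever two distinct pairs collide, i.e.\ $\alpha_i+\beta_j=\alpha_{i'}+\beta_{j'}$ with $(i,j)\neq(i',j')$: take $f=a_1x_1+a_2x_2$ and $g=b_1x_1+b_2x_2$, where $X_1Y_2$ and $X_2Y_1$ both deposit their leading terms into the coefficient of $x_1x_2$. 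One then only knows that a \emph{sum} of two such terms lies in $N(R)$, neither summand is covered by the inductive hypothesis, and membership of a sum in $N(R)$ does not pass to its summands. Closing this hole is precisely where the Armendariz-type trick enters: one must work in the reduced ring $R/N(R)$, use that reduced rings are reversible and satisfy the insertion property ($xy=0$ implies $xzy=0$), and multiply the coefficient identity by suitable images of the $b_{j'}$'s or $a_{i'}$'s to annihilate the competing summands one at a time, starting from products already known to vanish --- equivalently, pass to $A/N(A)$, a skew PBW extension over the reduced compatible ring $R/N(R)$, and invoke the skew $\Pi$-Armendariz property of such extensions (cf.\ \cite[Theorem 3.10]{Reyes2018}). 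Your induction as stated does not substitute for this step.
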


\section{Weak Annihilator Ideals}\label{WA}
In this section, we study the weak notion of annihilator introduced by Ouyang and Birkenmeier \cite{OuyangBirkenmeier2012} in the setting of skew PBW extensions.  

\begin{definition}[{\cite[Definition 2.1]{OuyangBirkenmeier2012}}]
Let $R$ be a ring. For $X\subseteq R$, it is defined $ N_R(X)= \left \{a \in R \ | \ xa \in N(R)\ \text{for all} \ x \in X \right \}$, which is called the {\em weak annihilator} of $X$ in $R$. If $X$ is a singleton, say $X= \left \{r \right \}$, we use $N_R(r)$ to denote $N_R(\left \{r \right\})$.
\end{definition}

Notice that for $X\subseteq R$, the sets given by $\left \{ a \in R \ | \ xa \in N(R)\ \text{for all} \ x \in X \right \}$ and $\left \{ b \in R \ | \ bx \in N(R)\ \text{for all} \ x \in X \right \}$ coincide. Moreover, $l_R(X),\ r_R(X) \subseteq N_R(X)$. It is clear that if $R$ is reduced, then $r_R(X) = l_R(X) = N_R(X)$. In addition, if $N(R)$ is an ideal of $R$, then $N_R(X)$ is also an ideal of $R$ \cite[p. 346]{OuyangBirkenmeier2012}.

\begin{example}[{\cite[Example 2.1]{OuyangBirkenmeier2012}}] Let $T_2(\mathbb{Z})$ be the $2\times 2$ upper triangular matrix ring over $\mathbb{Z}$ and $X=\left \{ \bigl(\begin{smallmatrix} 2 & 0\\ 0 & 2 \end{smallmatrix}\bigr) \right \} \subseteq T_2(\mathbb{Z})$. Then $r_{T_2(\mathbb{Z})}(X)=0$ and $N_{T_2(\mathbb{Z})}(X)= \left \{\bigl(\begin{smallmatrix} 0 & m\\ 0 & 0 \end{smallmatrix}\bigr) \ | \ m \in \mathbb{Z}\right \}$, whence $r_{T_2(\mathbb{Z})}(X)\neq N_{T_2(\mathbb{Z})}(X)$. This shows that a weak annihilator is not an immediate generalization of an annihilator.
\end{example}

We recall some properties of weak annihilators.

\begin{proposition}[{\cite[Proposition 2.1]{OuyangBirkenmeier2012}}]\label{Proposition2.1ouyang} If $X$ and $Y$ are subsets of a ring $R$, then we have the following:
\begin{itemize}
    \item[{\rm (1)}] $X \subseteq Y$ implies $N_R(Y) \subseteq N_R(X)$.
    \item[{\rm (2)}] $X \subseteq N_R(N_R(X)))$.
    \item[{\rm (3)}]$N_R(X)= N_R(N_R(N_R(X)))$.
\end{itemize}
\end{proposition}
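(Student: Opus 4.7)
The plan is to treat $N_R(\cdot)$ as a closure-type operator, exactly as one does for ordinary annihilators, and derive all three assertions from the defining bi-characterization of weak annihilators noted right after the definition, namely that
\[
N_R(X) \;=\; \{a\in R \mid xa\in N(R)\ \text{for all}\ x\in X\} \;=\; \{a\in R \mid ax\in N(R)\ \text{for all}\ x\in X\}.
\]
This symmetry is the only nontrivial ingredient; once it is in hand, (1)--(3) are formal.

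For (1), I would start with an arbitrary $a\in N_R(Y)$, unpack the definition to get $ya\in N(R)$ for every $y\in Y$, and then restrict to $X\subseteq Y$ to conclude $xa\in N(R)$ for all $x\in X$, i.e.\ $a\in N_R(X)$. This is a one-line verification.

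For (2), I would fix $x\in X$ and verify that $x$ lies in $N_R(N_R(X))$ by showing $ax\in N(R)$ for every $a\in N_R(X)$. But this is immediate from the symmetric form of the definition of $N_R(X)$ above: membership $a\in N_R(X)$ is equivalent to $ax\in N(R)$ for all $x\in X$. Hence $X\subseteq N_R(N_R(X))$.

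For (3), I would obtain the two inclusions separately. Applying (2) with $X$ replaced by $N_R(X)$ gives $N_R(X)\subseteq N_R(N_R(N_R(X)))$. Conversely, applying (1) to the inclusion $X\subseteq N_R(N_R(X))$ from (2) yields $N_R(N_R(N_R(X)))\subseteq N_R(X)$. Combining these gives equality.

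There is no real obstacle here; the only point one has to be careful about is to justify the left/right symmetry of $N_R(X)$ before starting, since the definition is stated on one side only. I would briefly note that this symmetry holds because $ra\in N(R)$ iff $ar\in N(R)$ (a standard fact: if $(ra)^k=0$ then $(ar)^{k+1}=a(ra)^k r=0$, and symmetrically), which is independent of any hypothesis on $R$. With this remark in place, parts (1)--(3) follow without any further computation.
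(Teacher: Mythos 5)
Your proof is correct. The paper itself gives no proof of this statement---it is quoted verbatim from \cite[Proposition 2.1]{OuyangBirkenmeier2012}---and your argument is the standard Galois-connection one used there: the left/right symmetry of $N_R(X)$ (which the paper already records immediately after the definition, and which you justify correctly via $(ar)^{k+1}=a(ra)^k r$) gives (1) and (2) formally, and (3) follows by applying (2) to $N_R(X)$ and (1) to the inclusion in (2).
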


For a skew PBW extension $A$ over a ring $R$, let ${\rm NAnn}_R(R) := \left \{N_R(U) \ | \ U \subseteq R\right \}$ and ${\rm NAnn}_A(A) := \left \{N_A(V) \ | \ V \subseteq A\right \}$. For $f = a_0 + a_1X_1 + \cdots + a_mX_m \in A$, we denote by $\left \{ a_0, \ldots, a_m \right \}$ or $C_f$ the set comprised of the coefficients of $f$, and for a subset $V \subseteq A$, $C_V = \bigcup_{f \in U}C_f$.

The following theorem establishes a bijective correspondence between weak annihilators of $A$ and weak annihilators of $R$. This result generalizes \cite[Theorem 3.21]{ReyesSuarez2021} which was formulated for classical annihilators.

\begin{theorem}\label{Theorem3.4}
If $A = \sigma(R)\left \langle x_1, \dotsc, x_n \right \rangle$ is a skew PBW extension over a $(\Sigma, \Delta)$-compatible and NI ring $R$, then the correspondence $\varphi :{\rm NAnn}_R(R) \rightarrow {\rm NAnn}_A(A)$, given by $\varphi(N_R(U)) = N_R(U)A$, for every $N_R(U) \in {\rm NAnn}_R(R)$, is bijective.
\end{theorem}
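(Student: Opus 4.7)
The strategy is to identify $\varphi$ with the assignment $N_R(U) \mapsto N_A(U)$ via the equality $N_R(U)A = N_A(U)$; bijectivity will then follow from restricting to $R$ (for injectivity and recovery of $U$) and taking coefficient sets (for surjectivity). For well-definedness, fix $U \subseteq R$ and $f = a_0 + a_1X_1 + \cdots + a_mX_m \in A$. For every $u \in U$, one has $uf = \sum_i (ua_i)X_i$, and Corollary \ref{corollary2.16} identifies $uf \in N(A)$ with the condition that $ua_i \in N(R)$ for every $i$. Hence $f \in N_A(U)$ iff $a_i \in N_R(U)$ for every $i$, iff $f \in N_R(U)A$. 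Consequently $\varphi(N_R(U)) = N_R(U)A = N_A(U) \in {\rm NAnn}_A(A)$, as required.

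Injectivity is immediate from the uniqueness of the PBW representation: viewing $r \in R$ as $r \cdot 1 \in A$, one has $N_R(U)A \cap R = N_R(U)$, so $N_R(U_1)A = N_R(U_2)A$ forces $N_R(U_1) = N_R(U_2)$.

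For surjectivity, given $N_A(V) \in {\rm NAnn}_A(A)$, I would take $U := C_V$ and prove $N_A(V) = N_A(C_V)$, which by the well-definedness step yields $N_A(V) = N_R(C_V)A = \varphi(N_R(C_V))$. The inclusion $\supseteq$ is the routine direction: if $f = \sum a_iX_i$ satisfies $b a_i \in N(R)$ for every $b \in C_V$ and every $i$, then for any $g = \sum b_jY_j \in V$ the expansion of $gf$ provided by Proposition \ref{Reyes2015Proposition2.9} writes each coefficient of $gf$ as a sum of products of some $b_j$ with iterated $\sigma$- and $\delta$-evaluations of some $a_i$, further multiplied by elements of $R$. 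Since $N(R)$ is a $(\Sigma, \Delta)$-compatible ideal, Proposition \ref{PropertynilR} keeps every such product in $N(R)$; hence $gf \in N(A)$ by Corollary \ref{corollary2.16}, so $f \in N_A(V)$.

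The main obstacle is the reverse inclusion: assuming $gf \in N(A)$ for every $g \in V$, one must extract $b_j a_i \in N(R)$ for every coefficient $b_j$ of an element of $V$ and every coefficient $a_i$ of $f$. This is a skew $\Pi$-Armendariz-type conclusion, and is \emph{not} supplied by Proposition \ref{ReyesSuarez2019-2Theorem 4.7}, whose hypothesis forces one of the two factors to already lie in $N(A)$. I would overcome this by passing to the quotient $A/N(A)$: Corollary \ref{corollary2.16} identifies $N(A)$ with $N(R)\langle x_1,\dotsc, x_n\rangle$, and since $N(R)$ is $\Sigma$-invariant and a $(\Sigma, \Delta)$-compatible ideal, the families $\Sigma, \Delta$ descend to $R/N(R)$, realizing $A/N(A)$ as a skew PBW extension $\sigma(R/N(R))\langle x_1, \dotsc, x_n\rangle$ over the reduced --- hence reversible --- and $(\bar\Sigma, \bar\Delta)$-compatible base $R/N(R)$. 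The cited theorem of Reyes asserting that reversible $(\Sigma, \Delta)$-compatible rings are skew $\Pi$-Armendariz then applies to $\bar g\bar f = 0$ in $A/N(A)$, yielding $\bar b_j \bar a_i = 0$ in $R/N(R)$, i.e.\ $b_j a_i \in N(R)$. Hence $a_i \in N_R(C_V)$ for every $i$, which completes the proof.
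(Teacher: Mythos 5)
Your proof is correct and follows the same skeleton as the paper's (well-definedness via the identity $N_A(U)=N_R(U)A$, injectivity by restricting to $R$, surjectivity by passing to the coefficient set $C_V$), but it diverges at the one step that carries all the weight: extracting $a_ib_j\in N(R)$ from $fg\in N(A)$ for an \emph{arbitrary} $f\in V$. The paper simply invokes Proposition \ref{ReyesSuarez2019-2Theorem 4.7} here, exactly in the unrestricted skew $\Pi$-Armendariz form that you point out its printed hypothesis does not literally license. You are right that, read literally, the hypothesis $f\in N(A)$ makes the proposition useless for this purpose (and indeed makes its ``if and only if'' vacuous, since over an NI ring both sides hold automatically once $f\in N(A)$); however, the surrounding text presents it as the analogue of \cite[Corollary 2.3]{OuyangBirkenmeier2012}, and the paper applies it again without that hypothesis in the proof of Theorem \ref{Nilpotentprimes}, so this is almost certainly a typographical slip in the statement rather than a substantive restriction. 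Your detour through $A/N(A)\cong\sigma(R/N(R))\langle x_1,\dotsc,x_n\rangle$ is a legitimate independent justification: $R/N(R)$ is reduced (hence reversible) and inherits $(\Sigma,\Delta)$-compatibility because $N(R)$ is a $(\Sigma,\Delta)$-compatible ideal, and Reyes's theorem on reversible compatible rings then yields $\overline{b_j}\,\overline{a_i}=0$, i.e.\ $b_ja_i\in N(R)$. What this buys is a self-contained argument that does not lean on the possibly misstated proposition; the price is the need to verify that $N(R)$ is $(\Sigma,\Delta)$-stable so that the extension structure genuinely descends to the quotient --- the $\Sigma$-stability is automatic for any endomorphism, and the $\Delta$-stability follows from Corollary \ref{corollary2.16}, since $x_ia=\sigma_i(a)x_i+\delta_i(a)\in N(A)=N(R)A$ forces $\delta_i(a)\in N(R)$ for $a\in N(R)$. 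It would strengthen your write-up to record this last verification explicitly, but there is no gap in the argument.
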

\begin{proof}
First, let us show that $\varphi$ is well defined, i.e., $N_A(U) =
N_R(U)A$, for every nonempty subset $U$ of $R$. Consider $f = a_0 + a_1X_1 + \cdots + a_lX_l \in N_R(U)A$ and $r\in U$. Since $a_i \in N_R(U)$, for all $0 \leq i \leq l$, this implies that $ra_i \in N(R)$, for each $i$. Thus, by Corollary \ref{MSc2.3.20}, we get $rf=ra_0+ra_1X_1 + \cdots + ra_lX_l \in N(R)A = N(A)$. Hence, $N_R(U)A \subseteq N_A(U)$. 

Now, let us prove that $N_A(U) \subseteq N_R(U)A$. If $f = a_0+a_1X_1+\cdots+a_lX_l \in N_A(U)$, then $rf = ra_0+ra_1X_1+\cdots+ra_lX_l \in N(A) = N(R)A$, for every element $r$ of $U$. This means that $ra_i \in N(R)$, for all $0 \leq i \leq l$, and so $a_i \in N_R(U)$, for each $i$, whence $f \in N_R(U)A$. Therefore, $N_A(U) = N_R(U)A$. 

Let $U,V$ be a two subsets of $R$ such that $\varphi(N_R(U))=\varphi(N_R(V))$. By definition of $\varphi$, $N_A(U)=N_A(V)$. In particular, we obtain $N_R(U)=N_R(V)$, that is, $\varphi$ is injective. 

Finally, let us show that $\varphi$ is surjective. Let $N_A(V) \in {\rm NAnn}_A(A)$ and consider $g = b_0 + b_1Y_1 + \cdots + b_sY_s \in N_A(V)$, for a subset $V$ of $A$. Then $fg \in N(A)$, for all $f = a_0 + a_1X_1 + \cdots + a_lX_l \in V$, and by Proposition \ref{ReyesSuarez2019-2Theorem 4.7}, $a_ib_j \in N(R)$, for each $i, j$. Thus, $b_j \in N_R(C_V)$, for all $0 \leq j \leq s$, whence $g \in N_R(C_V)A$, and so $N_A(V) \subseteq N_R(C_V)A$. Since $N_R(C_V)A \subseteq N_A(V)$, we obtain that $N_A(V) = N_R(C_V)A = \varphi(N_R(C_V))$, i.e., $\varphi$ is surjective.
\end{proof}

The following theorem generalizes \cite[Theorem 2.1]{OuyangBirkenmeier2012}.

\begin{theorem}\label{theorem3.4}
Let $A = \sigma(R)\left \langle x_1, \dots , x_n \right \rangle$ be a skew PBW extension over a $(\Sigma, \Delta)$-compatible and NI ring $R$. If for each subset $X \nsubseteq N(R)$, $N_R(X)$ is generated as an ideal by a nilpotent element, then for each subset $U \nsubseteq N(A)$, $N_A(U)$ is generated as an ideal by a nilpotent element.
\end{theorem}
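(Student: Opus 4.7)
The plan is to reduce the claim to the bijective correspondence $\varphi\colon N_R(X) \mapsto N_R(X)A$ established in Theorem~\ref{Theorem3.4}. Given a subset $U \nsubseteq N(A)$, let $C_U$ denote the set of coefficients of all polynomials appearing in $U$. I would first verify that $C_U \nsubseteq N(R)$: if instead every coefficient belonged to $N(R)$, then Corollary~\ref{corollary2.16}(1) would force $U \subseteq N(R)\langle x_1,\dotsc,x_n\rangle = N(A)$, contradicting our hypothesis on $U$. This step is where both the $(\Sigma,\Delta)$-compatibility and the NI hypothesis enter, since Corollary~\ref{corollary2.16} requires precisely those assumptions.

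With $C_U \nsubseteq N(R)$ in hand, the hypothesis on $R$ supplies a nilpotent $a \in R$ with $N_R(C_U) = RaR$ as ideals of $R$. The core step is then to identify $N_A(U)$ with the two-sided ideal $AaA$ of $A$ generated by the same element $a$, now regarded inside $A$. By the proof of Theorem~\ref{Theorem3.4} we have $N_A(U) = N_R(C_U)A$, and this is genuinely an ideal of $A$ because $N(A)$ is an ideal (Corollary~\ref{corollary2.16}(1)) and weak annihilators are ideals whenever the nilradical is. Since $a \in N_R(C_U) \subseteq N_A(U)$, we immediately get $AaA \subseteq N_A(U)$. For the reverse inclusion, any $f \in N_R(C_U)A$ has the form $\sum_i b_i X_i$ with $b_i \in RaR$, and writing $b_i = \sum_k r_{i,k}\, a\, s_{i,k}$ with $r_{i,k}, s_{i,k} \in R \subseteq A$ yields $f = \sum_{i,k} r_{i,k}\, a\, (s_{i,k} X_i) \in AaA$, so the two ideals coincide.

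Finally, since $a^m = 0$ in $R$ for some $m$ and $R$ is a subring of $A$, the element $a$ remains nilpotent in $A$, so $N_A(U) = AaA$ is generated as an ideal of $A$ by the nilpotent element $a$, as required. The main obstacle I expect is not the algebraic identification $N_A(U) = AaA$, which is essentially formal once the right ambient structure is in place, but rather the careful marshalling of Theorem~\ref{Theorem3.4} and Corollary~\ref{corollary2.16} to transfer information from $R$ to $A$ at the level of ideals rather than merely sets; both of those preparatory results hinge on the full $(\Sigma,\Delta)$-compatibility (not merely the weak variant) and on $R$ being NI, and without both hypotheses the coefficient-level description of $N_A(U)$ and the ideal property of $N(A)$ could fail, derailing the whole argument.
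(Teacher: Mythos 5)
Your proof is correct, and it reaches the conclusion by a genuinely more modular route than the paper's. The paper's own proof of Theorem~\ref{theorem3.4} does not invoke Theorem~\ref{Theorem3.4} at all: it picks the nilpotent $c$ with $N_R(C_U)=cR$, and then verifies both inclusions $cA\subseteq N_A(U)$ and $N_A(U)\subseteq cA$ by hand, expanding products $fcg$ and $fg$ via Proposition~\ref{Reyes2015Proposition2.9} and pushing the coefficients through $(\Sigma,\Delta)$-compatibility to land in $N(R)$. You instead outsource exactly that computational work to the identity $N_A(U)=N_R(C_U)A$ already established in the proof of Theorem~\ref{Theorem3.4}, so that all that remains is the purely formal identification of $N_R(C_U)A$ with the two-sided ideal $AaA$ of $A$; both containments ($AaA\subseteq N_A(U)$ because $N_A(U)$ is an ideal of $A$ containing $a$, and the reverse by decomposing each coefficient $b_i\in RaR$) are airtight, and the persistence of nilpotency of $a$ from $R$ to $A$ is immediate. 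What your approach buys is brevity and a cleaner separation of concerns; what it costs is nothing, since Theorem~\ref{Theorem3.4} precedes Theorem~\ref{theorem3.4} in the paper and carries the same hypotheses. One small point in your favor: you read ``generated as an ideal by a nilpotent element'' as the two-sided ideal $RaR$, whereas the paper works with $cR$ and concludes via $a_i=cr_i$; your reading is the more literal one, and your argument handles it correctly, whereas the paper's final step tacitly uses that $N_R(C_U)$ is the \emph{right} ideal $cR$.
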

\begin{proof}
Let $U$ be a subset of $A$ with $U \nsubseteq N(A)$. By Corollary \ref{MSc2.3.21}, $C_U \nsubseteq N(R)$. There exists $c\in N(R)$ such that $N_R(C_U)= c R$. Let us show that $N_A(U)=c A$. Let $f = a_0 + a_1X_1+ \cdots + a_lX_l \in U$ and $g= b_0 +b_1Y_1 + \cdots + b_sY_s \in A$. The idea is to show that $fcg \in N(A)$. Using that $cg= cb_0 + cb_1Y_1+ \cdots +cb_sY_s$, we get
\begin{center}
    $\displaystyle fcg = \sum_{k=0}^{s+l}\left ( \sum_{i+j=k} a_iX_icb_jY_j \right )=\sum_{k=0}^{s+l}\left ( \sum_{i+j=k} a_i\sigma^{\alpha_i}(cb_j)X_iY_j + a_ip_{\alpha_i,cb_j}Y_j \right)$.
\end{center}
Since $a_icb_j \in N(R)$, for every $0 \leq i \leq l$, $0\leq j \leq s$, \cite[Proposition 3.3]{HashemiKhalilAlhevaz2019} this implies that $a_i\sigma^{\alpha_i}(cb_j) \in N(R)$. Notice that $a_i\sigma^{\alpha}(\delta^{\beta}(cb_j))$ and $a_i\delta^{\beta}(\sigma^{\alpha}(cb_j))$ are elements of $N(R)$, for every $\alpha, \beta \in \mathbb{N}^n$. In addition, by Proposition \ref{Reyes2015Proposition2.9}, the polynomial $p_{\alpha_i,cb_j}$ involves elements obtained evaluating $\sigma$'s and $\delta$'s (depending on the coordinates of $\alpha_i$) in the element $cb_j$. Thus, $a_ip_{\alpha_i,cb_j}\in N(R)$ and $a_icb_j \in N(R)$, for every $i, j$, whence $fcg\in N(R)A = N(A)$, and therefore $cg \in N_A(U)$. 

Now, let us consider $f = a_0 + a_1X_1 +\cdots + a_lX_l \in N_A(U)$. By definition, for all $g=b_0 + b_1Y_1+ \cdots +b_sY_s$ it is satisfied that $fg \in N(A)$. From Corollary \ref{MSc2.3.21}, $fg \in N(R)A$, and so
\begin{center}
    $\displaystyle fg = \sum_{k=0}^{s+l}\left ( \sum_{i+j=k} a_iX_ib_jY_j \right )=\sum_{k=0}^{s+l}\left ( \sum_{i+j=k} a_i\sigma^{\alpha_i}(b_j)X_iY_j + p_{\alpha_i,b_j}Y_j \right) \in N(R)A$.
\end{center}
Again, since $a_i\sigma^{\alpha_i}(b_j)\in N(R)$, for every $i,j$, \cite[Proposition 3.3]{HashemiKhalilAlhevaz2019} implies that $a_ib_j \in N(R)$. Hence, $a_i \in N_R(C_U) = c R$, for each $0 \leq i \leq l$. This means that there exist elements $r_i \in R$ such that $a_i=cr_i$, for every $i$. In this way, $f = c(r_0 +r_1X_1 +\cdots + r_lX_l) \in cA$, that is, $f \in c A$. 
\end{proof}

\begin{corollary}[{\cite[Theorem 2.1]{OuyangBirkenmeier2012}}]
Let $R$ be a $(\sigma, \delta)$-compatible ring $R$. If for each subset $X\nsubseteq N(R)$, $N_R(X)$ is generated as an ideal by a nilpotent element, then for each subset $U\nsubseteq N(R[x;\sigma, \delta])$, $N_{R[x;\sigma,\delta]}(U)$ is generated as an ideal by a nilpotent element.
\end{corollary}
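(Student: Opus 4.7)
The plan is to recognize this corollary as the specialization of Theorem \ref{theorem3.4} to the case of a single indeterminate. By Remark \ref{comparisonendomorphism}(i), any Ore extension of injective type $R[x;\sigma,\delta]$ is a skew PBW extension $\sigma(R)\langle x\rangle$ with $n=1$, $\Sigma=\{\sigma\}$, and $\Delta=\{\delta\}$; indeed the defining relation $xr=\sigma(r)x+\delta(r)$ is precisely what conditions (iii)--(iv) of Definition \ref{gpbwextension} require when only one variable is present. Under this identification, the classical notion of $(\sigma,\delta)$-compatibility coincides verbatim with $(\Sigma,\Delta)$-compatibility, since the only exponents that occur are single natural numbers and $\sigma^{\alpha},\delta^{\alpha}$ are just iterates of $\sigma$ and $\delta$.

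The only hypothesis of Theorem \ref{theorem3.4} that is not made explicit in the statement of the corollary is the NI condition on $R$. As in Ouyang and Birkenmeier's original formulation \cite[Theorem 2.1]{OuyangBirkenmeier2012} (where $R$ was taken to be 2-primal, hence NI), this assumption must be carried along, since it is what guarantees that $N(R)$ is an ideal, that $N(A)=N(R)\langle x\rangle$ by Corollary \ref{corollary2.16}, and that Proposition \ref{ReyesSuarez2019-2Theorem 4.7} applies in the product expansions used in the proof of Theorem \ref{theorem3.4}. With this understanding, both hypotheses of Theorem \ref{theorem3.4} are in force for the algebra $A=R[x;\sigma,\delta]$.

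Applying Theorem \ref{theorem3.4} to $A$ then yields exactly the conclusion of the corollary: if every $N_R(X)$ with $X\nsubseteq N(R)$ is generated as an ideal of $R$ by a nilpotent element, then for every $U\nsubseteq N(A)$ the weak annihilator $N_A(U)$ is generated as an ideal of $A=R[x;\sigma,\delta]$ by a nilpotent element. I do not expect any genuine obstacle here; the entire content of the argument lies in making the one-variable identification explicit, which is already on record in Remark \ref{comparisonendomorphism}(i), and in matching the compatibility notions, which is a tautology. The corollary is therefore a direct quotation of Theorem \ref{theorem3.4} in the case $n=1$.
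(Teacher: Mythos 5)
Your proposal is correct and matches the paper's (implicit) argument exactly: the paper states this corollary as an immediate specialization of Theorem \ref{theorem3.4} to the one-variable case $A=R[x;\sigma,\delta]$, with no further proof given. Your observation that the NI (or 2-primal, as in Ouyang--Birkenmeier's original statement) hypothesis must be understood as part of the corollary's assumptions is a careful and accurate reading of what the application of Theorem \ref{theorem3.4} requires.
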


\begin{example}
\begin{enumerate}
\item [\rm (i)] Let $A$ be the skew PBW extension over the ring $\frac{\mathbb{F}_4[z]}{\left \langle z^2 \right \rangle}$ formulated in Example \ref{ExampleF4}. For any subset $X\subseteq \frac{\mathbb{F}_4[z]}{\left \langle z^2 \right \rangle}$ with $X \nsubseteq N\left(\frac{\mathbb{F}_4[z]}{\left \langle z^2 \right \rangle} \right)$, we obtain that $N_{\frac{\mathbb{F}_4[z]}{\left \langle z^2 \right \rangle}}(X)= z\frac{\mathbb{F}_4[z]}{\left \langle z^2 \right \rangle}$, where $z \in N\left(\frac{\mathbb{F}_4[z]}{\left \langle z^2 \right \rangle} \right)$. From Theorem \ref{theorem3.4}, we conclude that $N_{A}U=wA$, for any subset $U \subseteq A$ where $U\nsubseteq N(A)$, $w \in N(A)$ and $A=\sigma(R)\left \langle x_{1,0},x_{1,1},x_{1,2},x_{2,0},x_{2,1},x_{2,2} \right \rangle$.
\item[\rm (ii)] Consider Example \ref{ExampleMatrix}. For any subset $X\subseteq R$ with $X \nsubseteq N(R)$ it is satisfied that $N_R(X)= rR$, where $r\in N(R)$. Thus, by Theorem \ref{theorem3.4} we have that $N_{R[x;\overline{\sigma},\overline{\delta}]}U = wR[x;\overline{\sigma},\overline{\delta}]$, for any subset $U \subseteq R[x;\overline{\sigma},\overline{\delta}]$, where $U\nsubseteq N(R[x;\overline{\sigma},\overline{\delta}])$ and $w \in N(R[x;\overline{\sigma},\overline{\delta}])$. 
\end{enumerate}
\end{example}

The following theorem is a consequence of Theorem \ref{theorem3.4}  considering skew PBW extensions of the endomorphism type.  This result generalizes \cite[Theorem 2.2]{OuyangBirkenmeier2012}. 

\begin{theorem}\label{theorem3.4.1}
If $A = \sigma(R)\left \langle x_1, \dots , x_n \right \rangle$ is a skew PBW extension of endomorphism type over a $\Sigma$-compatible and NI ring $R$, then the following statements are equivalent:
\begin{enumerate}
    \item [{\rm (1)}] For each subset $X \nsubseteq N(R)$, $N_R(X)$ is generated as an ideal by a nilpotent element.
    \item[{\rm (2)}] For each subset $U \nsubseteq N(A)$, $N_A(U)$ is generated as an ideal by a nilpotent element.
\end{enumerate}
\end{theorem}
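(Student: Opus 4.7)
The equivalence has a trivial direction and a substantive one. For (1) $\Rightarrow$ (2), I would apply Theorem~\ref{theorem3.4} directly: because $A$ is of endomorphism type, every $\delta_i=0$, so $\Delta$-compatibility is vacuous and the $\Sigma$-compatible, NI hypothesis on $R$ delivers $(\Sigma,\Delta)$-compatibility for free. Statement (1) is then exactly the hypothesis of Theorem~\ref{theorem3.4}, whose conclusion is (2).

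For (2) $\Rightarrow$ (1), let $X\subseteq R$ with $X\nsubseteq N(R)$. By Corollary~\ref{corollary2.16}, $N(A)=N(R)A$ and hence $N(A)\cap R=N(R)$, so $X\nsubseteq N(A)$ when regarded as a subset of $A$. Hypothesis (2) supplies $f\in N(A)$ with $N_A(X)=(f)_A$; Corollary~\ref{corollary2.16} lets us write $f=c_0+c_1X_1+\cdots+c_mX_m$ with each $c_i\in N(R)$. The bijection $\varphi$ of Theorem~\ref{Theorem3.4} gives $N_A(X)=N_R(X)A$, so $(f)_A=N_R(X)A$, and reading off the coefficients of $f$ one obtains $c_i\in N_R(X)$ for every $i$. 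Set $c:=c_0\in N(R)\cap N_R(X)$, the candidate single generator.

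The remaining step, which I expect to be the main obstacle, is to prove $N_R(X)=cR$. The inclusion $cR\subseteq N_R(X)$ is clear because $c\in N_R(X)$ and $N_R(X)$ is a two-sided ideal under the NI hypothesis. For the reverse, take $r\in N_R(X)\subseteq fA$ and write $r=fh$ with $h=h_0+h_1Y_1+\cdots+h_tY_t$. The endomorphism-type identity $X_is=\sigma^{\alpha_i}(s)X_i$ for $s\in R$ yields
\[
fh \;=\; \sum_{i,j} c_i\,\sigma^{\alpha_i}(h_j)\, X_iY_j,
\]
and since $r\in R$ all non-constant PBW coefficients of $fh$ must vanish while the constant coefficient equals $r$. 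In a quasi-commutative setting each $X_iY_j$ would reduce to $d_{i,j}X_{i+j}$ with no lower-order tail, so only $i=j=0$ contributes to the constant coefficient and $r=c\,h_0\in cR$ directly. In the genuine endomorphism-type case, the relation $x_jx_i=d_{i,j}x_ix_j+r_0^{(i,j)}+\sum_k r_k^{(i,j)}x_k$ forces $X_iY_j$ to carry lower-degree (possibly constant) remainders, so extra contributions enter the constant coefficient of $fh$; these spurious terms must be absorbed into $cR$. The plan is to descend along the monomial order, using Proposition~\ref{Proposition3.8}(1)--(2)---which lets one swap $a\sigma^\alpha(b)\in N(R)$ for $ab\in N(R)$---to eliminate the contributions of $c_i$ with $i\geq1$ one by one, collapsing the constant equation to $r=c\,h_0\in cR$. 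Should this descent prove unwieldy, a fallback is to establish the ideal identity $(c)_A=(f)_A$ directly in $A$ (verifying $c_iX_i\in Ac_0A$ via the same PBW reductions) and then transfer the principal generation back to $R$ using the injectivity of $\varphi$ from Theorem~\ref{Theorem3.4}.
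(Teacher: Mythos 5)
Your reduction of (1) $\Rightarrow$ (2) to Theorem~\ref{theorem3.4} is exactly what the paper does, and your setup for (2) $\Rightarrow$ (1) --- pass from $X \nsubseteq N(R)$ to $X \nsubseteq N(A)$, take a nilpotent generator $f = c_0 + c_1X_1 + \cdots + c_mX_m$ of $N_A(X)$ with all $c_i \in N(R)$, and try to prove $N_R(X) = c_0R$ by writing each $r \in N_R(X)$ as $r = fh$ and reading off the constant coefficient --- is also the paper's argument. The easy inclusion $c_0R \subseteq N_R(X)$ is fine, as is your extra observation (via the bijection $\varphi$) that every $c_i$ lies in $N_R(X)$.

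The problem is that the decisive step is left as a ``plan,'' and the plan does not work as described. You correctly note that in a genuine (non-quasi-commutative) extension of endomorphism type the products $X_iY_j$ carry lower-order tails $p_{\alpha_i,\beta_j}$, since the relations $x_jx_i = d_{i,j}x_ix_j + r_0^{(i,j)} + \sum_k r_k^{(i,j)}x_k$ permit non-zero constants (think of the Weyl algebra, $yx = xy - 1$, which is of endomorphism type over $\Bbbk$). Hence the constant coefficient of $fh$ is
\[
r \;=\; c_0h_0 \;+\; \sum_{i\ge 1,\ j\ge 1} c_i\,\sigma^{\alpha_i}(h_j)\,\pi_0(X_iY_j),
\]
where $\pi_0$ extracts the constant term of the PBW expansion. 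The extra summands lie in $c_iR \subseteq N(R)\cap N_R(X)$, but nothing you invoke places them in $c_0R$: Proposition~\ref{Proposition3.8} only trades $ab \in N(R)$ for $a\sigma^{\alpha}(b) \in N(R)$, i.e.\ it controls membership in $N(R)$, not membership in the principal ideal $c_0R$, so the proposed ``descent along the monomial order'' has no mechanism for absorbing these terms; the fallback $(c_0)_A = (f)_A$ meets the identical obstruction. For what it is worth, the paper's own proof is silent on exactly this point: it writes $m = fg = \sum a_i\sigma^{\alpha_i}(b_j)X_iY_j$ and immediately concludes $m = a_0b_0$, i.e.\ it tacitly assumes the products $X_iY_j$ contribute nothing to the constant term (and it also assumes $a_0 \neq 0$ without justification, a degenerate case you likewise leave open). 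So you have located the genuine pressure point of the argument, but as written your proposal does not contain a proof of $N_R(X) \subseteq c_0R$; it is an identification of the difficulty rather than a resolution of it.
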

\begin{proof}
 By Theorem \ref{theorem3.4}, it suffices to show (2) $\Rightarrow$ (1). Let $X$ be a subset of $R$ with $X \nsubseteq N(R)$. Then $X \nsubseteq N(A)$, and so there exists $f = a_0 + a_1X_1 + \cdots + a_lX_l \in N(A)$ such that $N_A(X) = fA$. Notice that $f = a_0 +a_1X_1+\cdots+a_lX_l \in N(A)$, whence $a_i \in N(R)$, for all $0 \leq i \leq l$, by Corollary \ref{MSc2.3.21}. Suppose that $a_0 \neq 0$, and let us see that $N_R(X) = a_0R$. Since $a_0 \in N(R)$ and $N(R)$ is an ideal of $R$, we obtain $pa_0R \subseteq N(R)$, for each $p \in X$, whence $a_0R \subseteq N_R(X)$. If $m \in N_R(X)$, then $m \in N_A(X)$. Thus, there exists $g(x) = b_0 +b_1Y_1+\cdots+b_sY_s \in A$ such that
\begin{center}
  \begin{center}
    $\displaystyle m = fg = \sum_{k=0}^{s+l}\left ( \sum_{i+j=k} a_iX_ib_jY_j \right )=\sum_{k=0}^{s+l}\left ( \sum_{i+j=k} a_i\sigma^{\alpha_i}(b_j)X_iY_j \right )$
\end{center}  
\end{center}
Then $m = a_0b_0 \in a_0R$, and thus $N_R(X) \subseteq a_0R$, which means that $N_R(X) = a_0R$, with $a_0 \in N(R)$.
\end{proof}

\begin{corollary}[{\cite[Theorem 2.2]{OuyangBirkenmeier2012}}]
Let $R$ be a $(\sigma, \delta)$-compatible ring $R$. Then the following statements are equivalent:
\begin{enumerate}
\item [\rm (1)] For each subset $X\nsubseteq N(R)$, $N_R(X)$ is generated as an ideal by a nilpotent element. 
\item [\rm (2)]	For each subset $U\nsubseteq N(R[x;\sigma])$, $N_{R[x;\sigma]}(U)$ is generated as an ideal by a nilpotent element.
\end{enumerate}
\end{corollary}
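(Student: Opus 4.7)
Plan: The implication $(1) \Rightarrow (2)$ is immediate from Theorem \ref{theorem3.4}, since a skew PBW extension of endomorphism type is exactly the case $\Delta = \{0\}$, under which $\Sigma$-compatibility coincides with $(\Sigma,\Delta)$-compatibility. So the real work is in proving $(2) \Rightarrow (1)$.

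Given a subset $X \subseteq R$ with $X \nsubseteq N(R)$, I would first observe via Corollary \ref{corollary2.16} that $N(A) = N(R)A$, hence $N(A) \cap R = N(R)$, so $X \nsubseteq N(A)$ when regarded as a subset of $A$. The hypothesis in $(2)$ then produces a nilpotent element
\[
f = a_0 + a_1X_1 + \cdots + a_lX_l \in N(A) \quad \text{with} \quad N_A(X) = fA,
\]
and Corollary \ref{MSc2.3.21} forces each $a_i \in N(R)$. Assuming without loss of generality that $a_0 \neq 0$ (otherwise relabel using the smallest non-zero coefficient), the goal is to show that $N_R(X) = a_0R$ with the nilpotent element $a_0$ as its generator.

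The containment $a_0R \subseteq N_R(X)$ is straightforward: since $R$ is NI, $N(R)$ is a two-sided ideal, so $x\,a_0\,r \in N(R)$ for every $x \in X$ and $r \in R$. For the reverse containment, take $m \in N_R(X)$; by Theorem \ref{Theorem3.4} one has $N_R(X) \subseteq N_A(X) = fA$, so $m = fh$ for some $h = b_0 + b_1Y_1 + \cdots + b_sY_s \in A$. Using the endomorphism-type identity $X_ib_j = \sigma^{\alpha_i}(b_j)X_i$, I would expand
\[
m = fh = \sum_{i,j} a_i\sigma^{\alpha_i}(b_j)\,X_iY_j
\]
and read off the coefficient of $X_0 = 1$ in the PBW basis to conclude $m = a_0b_0 \in a_0R$.

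The delicate point is the constant-coefficient extraction. Proposition \ref{coefficientes}(2) yields $X_iY_j = d_{\alpha_i,\beta_j}x^{\alpha_i+\beta_j} + p_{\alpha_i,\beta_j}$, and for pairs $(i,j)$ with $i,j \geq 1$ the lower-order remainder $p_{\alpha_i,\beta_j}$ can carry a nonzero constant term, contributing stray summands $a_i\sigma^{\alpha_i}(b_j)\pi(p_{\alpha_i,\beta_j})$ to the constant part of $fh$. In the quasi-commutative subcase these remainders vanish and the argument goes through verbatim; in general they lie only in $N(R)$ a priori, and must be shown to lie in $a_0R$. The natural way to close this gap is to bootstrap using the identity $N_A(X) = N_R(X)A$ from Theorem \ref{Theorem3.4}: since $f \in N_A(X)$, every coefficient $a_i$ is already in $N_R(X)$, which combined with an induction on the total degree of $h$ should force the stray contributions into $a_0R$ and complete the proof.
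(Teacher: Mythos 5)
Your argument is correct and is essentially the paper's own route: the paper obtains this corollary as the one-variable instance of Theorem \ref{theorem3.4.1}, whose proof of (2) $\Rightarrow$ (1) is exactly your expansion $m = fg$ and extraction of the constant coefficient. The ``delicate point'' you raise at the end is vacuous for the statement actually being proved: in $R[x;\sigma]$ the monomials are powers of a single variable, so $X_iY_j = x^ix^j = x^{i+j}$ exactly, with $d_{\alpha_i,\beta_j}=1$ and $p_{\alpha_i,\beta_j}=0$; hence the constant term of $fg$ is literally $a_0b_0$ and no bootstrapping or induction on the degree of $h$ is needed. (That concern is a reasonable one to raise about the multivariate Theorem \ref{theorem3.4.1} itself, where condition (iv) of Definition \ref{gpbwextension} allows $x_jx_i - d_{i,j}x_ix_j$ to have a constant part, but it simply does not arise for the Ore-extension corollary.)
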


Next, we present a theorem for weak annihilators of principal right ideals. 

\begin{theorem}\label{Theorem3.1.6}
Let $A = \sigma(R)\left \langle x_1, \dots , x_n \right \rangle$ be a skew PBW extension over a $(\Sigma, \Delta)$-compatible and NI ring $R$. If for each principal right ideal $p R \nsubseteq N(R)$, $N_R(p R)$ is generated as an ideal by a nilpotent element, then for each principal right ideal $f A \nsubseteq N(A)$, $N_A(f A)$ is generated as an ideal by a nilpotent element.
\end{theorem}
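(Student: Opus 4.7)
My plan is to mimic the strategy of Theorem~\ref{theorem3.4}, but using the weaker principal-ideal hypothesis on $R$. Write $f = a_0 + a_1 X_1 + \dotsb + a_m X_m$. Since $N(A)$ is a two-sided ideal by Corollary~\ref{corollary2.16}, the assumption $fA \nsubseteq N(A)$ forces $f \notin N(A)$; Corollary~\ref{corollary2.16}(2) then gives some $a_i \notin N(R)$, and because $N(R)$ is an ideal this is equivalent to $a_i R \nsubseteq N(R)$. The hypothesis then supplies, for each such $i$, a nilpotent $c_i \in N(R)$ with $N_R(a_i R) = c_i R$, which is a two-sided ideal of $R$.

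The heart of the argument is a coefficient characterization: $g = b_0 + b_1 Y_1 + \dotsb + b_t Y_t$ lies in $N_A(fA)$ if and only if $b_j \in \bigcap_{i} N_R(a_i R)$ for every $j$. For $(\Leftarrow)$, expand $fhg$ by Proposition~\ref{Reyes2015Proposition2.9}: every coefficient of $fhg$ is a finite sum of terms $a_i \cdot u \cdot v$, where $u$ is built from the coefficients of $h$ by iterated applications of the endomorphisms $\sigma_k$ and derivations $\delta_k$, and $v$ is similarly built from $b_j$. The hypothesis $b_j \in N_R(a_i R)$ yields $a_i r b_j \in N(R)$ for every $r \in R$, and since $N(R)$ is a $(\Sigma,\Delta)$-compatible ideal (Proposition~\ref{PropertynilR}), one absorbs the $\sigma_k$'s and $\delta_k$'s applied to $b_j$ freely, so each summand lies in $N(R)$ and $fhg \in N(A)$ by Corollary~\ref{corollary2.16}(2). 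For $(\Rightarrow)$, take $h = r \in R$ and apply Proposition~\ref{ReyesSuarez2019-2Theorem 4.7} to $(fr)g \in N(A)$: each coefficient of $fr$ times each $b_j$ lies in $N(R)$. The coefficient of the top monomial $X_m$ in $fr$ is precisely $a_m \sigma^{\alpha_m}(r)$ (by degree compatibility of the admissible order no $p_{\alpha_k, r}$ can contribute to a monomial as large as $X_m$), whence $a_m \sigma^{\alpha_m}(r) b_j \in N(R)$; two applications of Proposition~\ref{PropertynilR}, namely part (1) to produce $a_m \sigma^{\alpha_m}(r b_j) \in N(R)$ followed by part (3), yield $a_m r b_j \in N(R)$. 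A downward induction on $i$ then peels off the contributions $a_k \cdot (\mbox{coefficient of } X_i \mbox{ in } p_{\alpha_k, r})$ with $k > i$ (absorbed by the inductive hypothesis applied to each such $k$) and produces $a_i r b_j \in N(R)$ for every $i$, $r$, $j$; that is, $b_j \in \bigcap_i N_R(a_i R)$.

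It remains to produce a single nilpotent $c \in R$ with $cR = \bigcap_{i: a_i \notin N(R)} c_i R$; the coefficient characterization will then give $N_A(fA) = cA$ with $c \in N(R) \subseteq N(A)$ nilpotent. Taking $c$ to be an appropriate product of the $c_i$'s yields $cR \subseteq \bigcap_i c_i R$, because each $c_i R$ is a two-sided ideal and each $c_i \in N(R)$, and this already delivers $cA \subseteq N_A(fA)$ from the characterization above. I expect the reverse containment $\bigcap_i c_i R \subseteq cR$ to be the main obstacle, since an intersection of principal two-sided ideals generated by nilpotents need not in general be principal. The argument must exploit the interplay between the NI condition, $(\Sigma,\Delta)$-compatibility, and an iterative application of the principal-ideal hypothesis, in the spirit of \cite[Theorem~2.3]{OuyangBirkenmeier2012}, to refine the product into a generator of the intersection. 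Once such a $c$ is in hand, the nilpotency of $c$ and the identification $N_A(fA) = cA$ are immediate.
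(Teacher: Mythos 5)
Your overall strategy is sound and your coefficient characterization of $N_A(fA)$ is correct, but the proof is not complete: you explicitly leave open the reverse containment $\bigcap_i N_R(a_iR)\subseteq cR$, call it ``the main obstacle,'' and speculate that closing it requires an iterative refinement of the principal-ideal hypothesis. In fact no such machinery is needed, and this is precisely the point the proposal misses. Since $R$ is NI, $N(R)$ is a two-sided ideal; hence for \emph{any} subset $X\subseteq R$ and any $d\in N(R)$ one has $xdr\in N(R)$ for all $x\in X$ and $r\in R$, i.e.\ $dR\subseteq N_R(X)$. Applying this with $d=c_i$ and $X=a_kR$ gives $c_iR\subseteq N_R(a_kR)=c_kR$ for every pair of indices $i,k$ with $a_i,a_k\notin N(R)$; by symmetry all these ideals coincide, so $\bigcap_{i}N_R(a_iR)=c_{i_0}R$ for any single admissible index $i_0$ (the factors with $a_i\in N(R)$ contribute $N_R(a_iR)=R$ and can be discarded). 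Your argument then closes with $c=c_{i_0}$: there is nothing to ``refine,'' since the intersection is principal for free, precisely because the generators are nilpotent.

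The paper exploits this from the outset: it fixes one index $i_0$ with $a_{i_0}R\nsubseteq N(R)$, sets $cR=N_R(a_{i_0}R)$, and proves $N_A(fA)=cA$ directly. The inclusion $cA\subseteq N_A(fA)$ is automatic because $c\in N(R)\subseteq N(A)$ and $N(A)$ is an ideal of $A$ by Corollary \ref{corollary2.16}, so $fgch\in N(A)$ for all $g,h\in A$; the reverse inclusion uses only the single index $i_0$, exactly as in your $(\Rightarrow)$ direction. So your route is essentially the paper's with an unnecessary detour through the full intersection, and once the one-line observation above is added the two arguments coincide in substance. (Your leading-coefficient downward induction in the $(\Rightarrow)$ direction is in fact spelled out more carefully than in the paper, which reads $a_i\sigma^{\alpha_i}(rp_j)\in N(R)$ off the displayed expansion without isolating the top term first.)
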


\begin{proof}
Let $f= a_0 +a_1X_1+ \cdots + a_lX_l \in A$ with $fA \nsubseteq N(A)$. If $a_i R \subseteq N(R)$, for all $0 \leq i \leq l$, by Corollary \ref{MSc2.3.21}, we have that $f A \subseteq N(A)$, which is a contradiction. In this way, there exists $0 \leq i \leq l$ such that $a_i R\nsubseteq N(R)$. Thus, there exists $c \in N(R)$ such that $N_R(a_iR) =c R$. Let us show that $N_A(fA)= c A$. Consider $g = b_0 + b_1Y_1 +\cdots + b_sY_s$ and $h=c_0 + c_1Z_1 + \cdots + c_tZ_t$ elements of $A$. By Theorem \ref{theorem3.4}, $gch \in N(R)A = N(A)$ and by Corollary \ref{MSc2.3.21}, for every $f \in A$, $fgch \in N(A) = N(R)A$, since that $N(A)$ is an ideal. Therefore, we obtain that $ch \in N_A(fA)$, that is, $cA \subseteq N_A(fA)$. Let $p = p_0 +p_1Y_1 +\cdots + p_sY_s \in N_A(f A)$. Then $f A p \subseteq N(A)$, for every $f = a_0 + a_1X_1 + \cdots + a_lX_l \in A$. In particular, $f R p \subseteq N(A)$. Let $r \in R$. We get $rp = rp_0 + rp_1Y_1 \cdots + rp_sY_s$, which implies that 
{\small{\begin{center}
    $\displaystyle frp = \sum_{k=0}^{s+l}\left ( \sum_{i+j=k} a_iX_irp_jY_j \right )=\sum_{k=0}^{s+l}\left ( \sum_{i+j=k} a_i\sigma^{\alpha_i}(rp_j)X_iY_j + a_ip_{\alpha_i,rp_j}Y_j \right ) \in N(R)A$.
\end{center}}}
Therefore, $a_i\sigma^{\alpha_i}(rp_j)\in N(R)$ and so $a_irp_j \in N(R)$, for every $i,j$. In particular, we obtain $p_j \in N_R(a_i R) = c R$, and thus there exists $r_j \in R$ such that $p_j=cr_j$  Hence, $p= p_0 +p_1Y_1+ \cdots + p_sY_s= c(r_0 + r_1Y_1 + \cdots + r_sY_s) \in c  A $. Therefore, we conclude that $p \in c A$ and so $N_A(fA)\subseteq cA$.
\end{proof}

\begin{corollary}[{\cite[Theorem 2.3]{OuyangBirkenmeier2012}}]
Let $R$ be a $(\sigma,\delta)$-compatible 2-primal ring. If for each principal right ideal $p R\nsubseteq N(R)$, $N_R(p R)$ is generated as an ideal by a nilpotent element, then for each principal right ideal $f(x) R[x;\sigma,\delta] \nsubseteq N(R[x;\sigma,\delta])$, $N_{R[x;\sigma,\delta]} (f(x) R[x;\sigma,\delta])$ is generated as an ideal by a nilpotent element.
\end{corollary}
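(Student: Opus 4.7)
The plan is to follow the author's strategy of locating a single distinguished coefficient of $f$ whose weak annihilator in $R$ produces the generator of $N_A(fA)$, then proving a double inclusion; Propositions \ref{PropertynilR}, \ref{ReyesSuarez2019-2Theorem 4.7}, and Corollary \ref{corollary2.16} do the heavy lifting.

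\emph{Step 1 (finding the generator).} Write $f = a_0 + a_1X_1 + \cdots + a_lX_l$ and suppose, for contradiction, that $a_i R \subseteq N(R)$ for every $i$. Then for an arbitrary $g = b_0 + b_1Y_1 + \cdots + b_sY_s \in A$ one has $a_ib_j \in N(R)$ for all $i,j$, so Proposition \ref{ReyesSuarez2019-2Theorem 4.7} yields $fg \in N(A)$, contradicting $fA \nsubseteq N(A)$. Hence some index $i$ satisfies $a_iR \nsubseteq N(R)$, and the hypothesis furnishes a nilpotent $c \in N(R)$ with $N_R(a_iR) = cR$. I would then show that $N_A(fA) = cA$.

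\emph{Step 2 (the inclusion $cA \subseteq N_A(fA)$).} This inclusion uses only that $c \in N(R)$, not the distinguished choice of $i$. Fix arbitrary $g,h \in A$; I would verify $fgch \in N(A)$. Since $N(R)$ is an ideal and is $(\Sigma,\Delta)$-compatible (Proposition \ref{PropertynilR}), every iterate $\sigma^\theta(\delta^\eta(c))$ lies in $N(R)$, so expanding $gc$ by Proposition \ref{Reyes2015Proposition2.9} shows that every coefficient of $gc$ is a sum of products $b_j \cdot \sigma^\theta(\delta^\eta(c)) \in R \cdot N(R) \subseteq N(R)$. Corollary \ref{corollary2.16}(1) then gives $gc \in N(A)$, and since $N(A)$ is an ideal, $fgch \in N(A)$. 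Thus $ch \in N_A(fA)$ for every $h$; since $N_A(fA)$ is an ideal, $cA \subseteq N_A(fA)$.

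\emph{Step 3 (the inclusion $N_A(fA) \subseteq cA$).} Let $p = p_0 + p_1Y_1 + \cdots + p_tY_t \in N_A(fA)$. For each $r \in R$, $fr \in fA$, so $(fr)p = f(rp) \in N(A)$; since $rp = \sum_j (rp_j)Y_j$ has coefficients $rp_j$, Proposition \ref{ReyesSuarez2019-2Theorem 4.7} forces $a_k(rp_j) \in N(R)$ for every $k,j$ and every $r \in R$. Specializing to the distinguished index $i$ gives $p_j \in N_R(a_iR) = cR$, so $p_j = cs_j$ with $s_j \in R$. Hence $p = c(s_0 + s_1Y_1 + \cdots + s_tY_t) \in cA$.

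\emph{Main obstacle.} The technical crux is Step 2: one must track every coefficient produced by the noncommutative rewriting in Proposition \ref{Reyes2015Proposition2.9} and confirm it lands in $N(R)$. This is where the $(\Sigma,\Delta)$-compatibility of the ideal $N(R)$ is indispensable, because it absorbs all the $\sigma^\theta$ and $\delta^\eta$ generated when $c$ is moved past the monomials $Y_j$; without the NI hypothesis, neither Corollary \ref{corollary2.16} nor Proposition \ref{ReyesSuarez2019-2Theorem 4.7} would be available, and the argument would break down.
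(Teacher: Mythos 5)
Your proposal is correct and follows essentially the same route as the paper: the corollary is obtained there by specializing Theorem \ref{Theorem3.1.6} to the Ore extension $R[x;\sigma,\delta]$ (a skew PBW extension over a $(\sigma,\delta)$-compatible $2$-primal, hence NI, ring), and your three steps reproduce the paper's proof of that theorem, with only a cosmetic difference in Step 2 where you track the coefficients of $gc$ directly via Proposition \ref{Reyes2015Proposition2.9} instead of citing the computation already done in Theorem \ref{theorem3.4}.
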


\begin{example}
\begin{enumerate}
\item[\rm (i)] Once more again, let $A$ be the skew PBW extension over the ring $\frac{\mathbb{F}_4[z]}{\left \langle z^2 \right \rangle}$ considered in Example \ref{ExampleF4}. Let $p \in \frac{\mathbb{F}_4[z]}{\left \langle z^2 \right \rangle}$ such that $p \frac{\mathbb{F}_4[z]}{\left \langle z^2 \right \rangle} \nsubseteq N\left(\frac{\mathbb{F}_4[z]}{\left \langle z^2 \right \rangle} \right)$. Notice that $N_{\frac{\mathbb{F}_4[z]}{\left \langle z^2 \right \rangle}}\left(p\frac{\mathbb{F}_4[z]}{\left \langle z^2 \right \rangle}\right)=  z \frac{\mathbb{F}_4[z]}{\left \langle z^2 \right \rangle}  $, where $z \in N\left(\frac{\mathbb{F}_4[z]}{\left \langle z^2 \right \rangle} \right)$.  By Theorem \ref{Theorem3.1.6} $N_{A}(fA)$ is generated by a nilpotent element for any principal right ideal $fA$ with $fA \nsubseteq N(A)$ and $A=\sigma(R)\left \langle x_{1,0},x_{1,1},x_{1,2},x_{2,0},x_{2,1},x_{2,2} \right \rangle$.
\item[\rm (ii)] In Example \ref{ExampleMatrix}, let $p \in R$ such that $pR \nsubseteq N(R)$, where $p= \begin{pmatrix} p(t) & q(t)\\0 & q(t) \end{pmatrix}$, for some $p(t), q(t) \in \Bbbk[t]$ and $p(t) \neq 0$. It is straightforward to see that 
\begin{center}
    $N_R(pR)=\left \{ \begin{pmatrix} 0 & q(t)\\0 &0 \end{pmatrix} \ | \ q(t) \in \Bbbk[t]\ \right \} =  \begin{pmatrix} 0 & 1\\0 &0 \end{pmatrix}R  $, 
\end{center}
with $\begin{pmatrix} 0 & 1\\0 &0 \end{pmatrix} \in N(R)$. By Theorem \ref{Theorem3.1.6},  $N_{R[x;\overline{\sigma},\overline{\delta}]}(fR[x;\overline{\sigma},\overline{\delta}])$ is generated by a nilpotent element for any principal right ideal $fR[x;\overline{\sigma},\overline{\delta}]$ where $fR[x;\overline{\sigma},\overline{\delta}] \nsubseteq N(R[x;\overline{\sigma},\overline{\delta}])$ with $f \in R[x;\overline{\sigma},\overline{\delta}]$.
\end{enumerate}
\end{example}

\begin{theorem}\label{Theorem3.13}
If $A = \sigma(R)\left \langle x_1, \dots , x_n \right \rangle$ is a skew PBW extension of endomorphism type over a $\Sigma$-compatible and NI ring $R$, then the following statements are
equivalent:
\begin{enumerate}
    \item[{\rm (1)}] For each principal right ideal $p R \nsubseteq N(R)$, $N_R(pR)$ is generated as an ideal by a nilpotent element.
   \item[{\rm (2)}] For each principal right ideal $f A \nsubseteq N(A), N_A(f A)$ is generated as an ideal by a nilpotent element. 
\end{enumerate}
\end{theorem}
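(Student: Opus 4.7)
The implication $(1) \Rightarrow (2)$ is a direct consequence of Theorem \ref{Theorem3.1.6}. Since an endomorphism-type skew PBW extension over a $\Sigma$-compatible ring $R$ is precisely the case of a $(\Sigma,\Delta)$-compatible extension in which every $\delta_i$ vanishes, the hypothesis (1) on $R$ is exactly what is needed to invoke that theorem and conclude (2).

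\smallskip

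For $(2) \Rightarrow (1)$, the plan is to transport the strategy of Theorem \ref{theorem3.4.1} to the setting of principal ideals. Begin with a principal right ideal $pR \nsubseteq N(R)$. Since $N(R)$ is an ideal, $p \notin N(R)$, and Corollary \ref{MSc2.3.21} then forces $p \notin N(A)$, so $pA \nsubseteq N(A)$. By (2) there exists $f = a_0 + a_1X_1 + \dotsb + a_lX_l \in N(A)$ with $N_A(pA) = fA$, and Corollary \ref{MSc2.3.21} applied to $f$ delivers $a_i \in N(R)$ for every $i$. Assuming $a_0 \neq 0$, the target is to prove $N_R(pR) = a_0R$. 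The inclusion $a_0R \subseteq N_R(pR)$ is immediate from $a_0 \in N(R)$ and the fact that $N(R)$ is a two-sided ideal of $R$. For the reverse inclusion, take $b \in N_R(pR)$ and first lift it to $N_A(pA) = fA$. Given $h = h_0 + h_1Y_1 + \dotsb + h_sY_s \in A$, the endomorphism-type identity $Y_jb = \sigma^{\beta_j}(b)Y_j$ (no differential correction appears, since $\delta = 0$) produces
\begin{equation*}
(ph)b = ph_0b + \sum_{j=1}^{s} ph_j\sigma^{\beta_j}(b)\,Y_j.
\end{equation*}
Each coefficient $ph_j b$ belongs to $N(R)$ because $b \in N_R(pR)$, and Proposition \ref{PropertynilR}(1), applied to the $(\Sigma,\Delta)$-compatible ideal $N(R)$, then gives $ph_j\sigma^{\beta_j}(b) \in N(R)$. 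All coefficients of $(ph)b$ therefore lie in $N(R)$, so $(ph)b \in N(R)A = N(A)$ by Corollary \ref{MSc2.3.20}. Hence $b \in N_A(pA) = fA$, and we can write $b = fg$ with $g = g_0 + g_1Z_1 + \dotsb + g_tZ_t$. Expanding $fg$ via the endomorphism-type relations $a_iX_ig_jZ_j = a_i\sigma^{\alpha_i}(g_j)X_iZ_j$ and comparing the constant coefficients of both sides exactly as in the proof of Theorem \ref{theorem3.4.1} yields $b = a_0g_0 \in a_0R$.

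\smallskip

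I expect the main obstacle to reside in that last identification $b = a_0g_0$. In a general skew PBW extension the basis-reduction polynomials $p_{\alpha_i,\beta_j}$ from Proposition \ref{coefficientes}(2) can in principle contribute terms of every degree below $|\alpha_i|+|\beta_j|$ to $X_iZ_j$, so one must argue carefully, using the vanishing of the degree $\geq 1$ coefficients of $fg$ (forced by $b \in R$ together with the freeness of $A$ on ${\rm Mon}(A)$), that the surviving constant coefficient collapses to the single term $a_0g_0$. This is the same technical point handled by degree comparison in the proof of Theorem \ref{theorem3.4.1}, and the argument transfers without change to the principal-ideal setting here.
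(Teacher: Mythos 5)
Your proof follows the paper's argument essentially verbatim: $(1)\Rightarrow(2)$ is quoted from Theorem \ref{Theorem3.1.6}, and for $(2)\Rightarrow(1)$ both you and the paper apply hypothesis (2) to the ideal generated by $p$ inside $A$, take the constant coefficient $a_0 \in N(R)$ of the resulting generator $f$, and establish $N_R(pR) = a_0R$ by the same two inclusions, the reverse one by writing $m = fg$ and reading off the constant coefficient. The only real difference is cosmetic: you work with the genuine principal right ideal $pA$ and therefore supply the extra lifting step $N_R(pR) \subseteq N_A(pA)$, whereas the paper applies (2) directly to $N_A(pR)$; and the constant-coefficient subtlety you flag at the end (possible degree-zero contributions from the reduction polynomials $p_{\alpha_i,\beta_j}$ in the products $X_iY_j$) is present, and left equally implicit, in the paper's own proof.
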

\begin{proof}
By Theorem \ref{Theorem3.1.6}, it suffices to show ${\rm (ii)} \Rightarrow {\rm (i)}$. Let $ pR$ be a principal right ideal of $R$ with $p R \nsubseteq N(R)$. In particular, we have that $pR \nsubseteq N(A)$. Therefore, there exists $f = a_0 + a_1X_1 + \cdots + a_lX_l \in N(A)$ such that $N_A(pR) =fA$. With this in mind, notice that $f = a_0 + a_1X_1 + \dotsc + a_lX_l \in N(A)$, whence $a_i \in N(R)$, for all $0 \leq i \leq l$, by Corollary \ref{MSc2.3.21}. We may assume that $a_0 \neq 0$.
Let us show that $N_R(pR) = a_0R$. Since $a_0 \in N(R)$ and $N(R)$ is an ideal of $R$, we obtain $pra_0R \subseteq N(R)$, for each $r \in R$. Thus $a_0 R \subseteq N_R(pR)$. If $m \in N_R(pR)$, then $m \in N_A(pR)$, and so there exists $g = b_0 +b_1Y_1+\cdots+b_sY_s \in A$ such that
\begin{center}
  \begin{center}
    $\displaystyle m = fg = \sum_{k=0}^{s+l}\left ( \sum_{i+j=k} a_iX_ib_jY_j \right )=\sum_{k=0}^{s+l}\left ( \sum_{i+j=k} a_i\sigma^{\alpha_i}(b_j)X_iY_j \right )$
\end{center}  
\end{center}
Hence, we have $m = a_0b_0 \in a_0R$, whence $N_R(pR) \subseteq a_0R$. Therefore, we conclude that $N_R(p R) = a_0R$ where $a_0 \in N(R)$.
\end{proof}

\begin{corollary}
	[{\cite[Theorem 2.4]{OuyangBirkenmeier2012}}] Let $R$ be a $\sigma$-compatible 2-primal ring. Then the following statements are equivalent:
	\begin{enumerate}
	\item [\rm (1)] For each principal right ideal $p R\nsubseteq N(R)$, $N_R(pR)$ is generated as an ideal by a nilpotent element.
	\item [\rm (2)] For each principal right ideal $f(x) R[x;\sigma]\nsubseteq N(R[x;\sigma])$, $N_{R[x;\sigma]}(f(x) R[x;\sigma])$ is generated as an ideal by a nilpotent element.	
	\end{enumerate}
\end{corollary}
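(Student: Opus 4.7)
The plan is minimal: the corollary is the single-variable Ore-extension specialization of Theorem \ref{Theorem3.13}, so the task reduces to checking that the hypotheses of Theorem \ref{Theorem3.13} hold in this setting and then invoking it.

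The first verification is that $R[x;\sigma]$ is itself a skew PBW extension of endomorphism type on the single generator $x$ over $R$, with $\Sigma=\{\sigma\}$ and $\Delta=\{0\}$. The defining relation $xr=\sigma(r)x$ for $r\in R$ matches Definition \ref{gpbwextension}(iii) with zero correction; Definition \ref{gpbwextension}(iv) is vacuous since there is a single generator; and $\delta=0$ places this in the endomorphism-type class of Definition \ref{sigmapbwderivationtype}(iii). The injectivity of $\sigma$ required by the skew PBW framework is a free consequence of the $\sigma$-compatibility hypothesis on $R$. This identification is also the content of Remark \ref{comparisonendomorphism}(i).

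The remaining verifications are compatibility translations. The $\sigma$-compatibility hypothesis on $R$ is exactly the $\Sigma$-compatibility hypothesis for the singleton family $\Sigma=\{\sigma\}$. The 2-primal hypothesis $P(R)=N(R)$ forces $N(R)$ to be an ideal of $R$ (since $P(R)$ is always an ideal), and hence $N^{*}(R)=N(R)$, which is the NI property required by Theorem \ref{Theorem3.13}. With all hypotheses verified, applying Theorem \ref{Theorem3.13} to $A=R[x;\sigma]$ delivers the equivalence of (1) and (2) verbatim. I do not anticipate any genuine obstacle: the entire content of the corollary lives in Theorem \ref{Theorem3.13}, and what remains is only the translation of the conclusion back into classical Ore-extension notation, with the principal right ideal $fA$ of the theorem becoming the principal right ideal $f(x)R[x;\sigma]$ of the corollary.
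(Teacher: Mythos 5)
Your proposal is correct and matches the paper's (implicit) argument: the corollary is stated there without proof precisely because it is the one-variable, endomorphism-type specialization of Theorem \ref{Theorem3.13}, and your verifications (injectivity of $\sigma$ from compatibility, $\Sigma$-compatibility for the singleton family, and 2-primal $\Rightarrow$ NI via $P(R)\subseteq N^{*}(R)\subseteq N(R)$) are exactly the translations needed. Nothing is missing.
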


\begin{theorem}\label{theorem3.9}
Let $A = \sigma(R)\left \langle x_1, \dots , x_n \right \rangle$ be a skew PBW extension over a $(\Sigma, \Delta)$-compatible and NI ring $R$. If for each $p \notin N(R)$, $N_R(p)$ is generated as an ideal by a nilpotent element, then for each $f \notin N(A)$, $N_A(f)$ is generated as an ideal by a nilpotent element.
\end{theorem}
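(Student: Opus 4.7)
The plan is to mirror the proofs of Theorems~\ref{theorem3.4} and \ref{Theorem3.1.6}, relying on two facts already in hand: Corollary~\ref{corollary2.16}(2), which detects $N(A)$ coefficient-wise, and Proposition~\ref{ReyesSuarez2019-2Theorem 4.7}, which converts the condition $fh \in N(A)$ into the pointwise conditions $a_i b_j \in N(R)$ on the coefficients.

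First, writing $f = a_0 + a_1 X_1 + \dotsb + a_l X_l$, I would use Corollary~\ref{corollary2.16}(2) to select an index $i_0$ with $a_{i_0} \notin N(R)$ (such an index exists because $f \notin N(A)$), and then invoke the hypothesis of the theorem to obtain a nilpotent $c \in N(R)$ with $N_R(a_{i_0}) = cR$. The target is to establish $N_A(f) = cA$. For the inclusion $cA \subseteq N_A(f)$, take an arbitrary $g = b_0 + b_1 Y_1 + \dotsb + b_s Y_s \in A$: because $c \in N(R)$ and $R$ is NI, each coefficient $cb_j$ of $cg$ lies in $N(R)$, so $a_i(cb_j) = (a_ic)b_j \in N(R)$ for every $i, j$, and Proposition~\ref{ReyesSuarez2019-2Theorem 4.7} applied to the product $f \cdot (cg)$ forces $f(cg) \in N(A)$, i.e.\ $cg \in N_A(f)$. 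For the reverse inclusion, given $h = b_0 + b_1 Y_1 + \dotsb + b_s Y_s \in N_A(f)$ so that $fh \in N(A)$, Proposition~\ref{ReyesSuarez2019-2Theorem 4.7} delivers $a_i b_j \in N(R)$ for all $i, j$; specialising to $i = i_0$ yields $b_j \in N_R(a_{i_0}) = cR$, say $b_j = c r_j$, and therefore $h = c(r_0 + r_1 Y_1 + \dotsb + r_s Y_s) \in cA$.

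The step I expect to be the main subtlety is the reverse inclusion: one has to convert the global condition $fh \in N(A)$ into a conclusion about the single coefficient $a_{i_0}$ of $f$, and this is exactly what Proposition~\ref{ReyesSuarez2019-2Theorem 4.7} provides (in the same coefficient-wise form in which it is used in the proofs of Theorems~\ref{theorem3.4} and \ref{Theorem3.1.6}). The other inclusion is essentially automatic, because $c \in N(R)$ makes $a_ic \in N(R)$ for \emph{every} $i$ by the NI hypothesis, so no additional compatibility or $(\Sigma,\Delta)$-calculation beyond what is already encoded in Proposition~\ref{ReyesSuarez2019-2Theorem 4.7} is required.
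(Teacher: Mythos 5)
Your proposal is correct and follows essentially the same route as the paper's proof: choose a coefficient $a_{i_0}\notin N(R)$, take $c\in N(R)$ with $N_R(a_{i_0})=cR$, and establish $N_A(f)=cA$ by verifying both inclusions coefficient-wise. The only cosmetic difference is that you channel both inclusions through Proposition \ref{ReyesSuarez2019-2Theorem 4.7}, whereas the paper re-expands the products explicitly and appeals to the $(\Sigma,\Delta)$-compatibility of $N(R)$ (citing the computation from Theorem \ref{theorem3.4}); the underlying argument is the same.
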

\begin{proof}
Consider $f= a_0 + a_1X_1 + \cdots + a_lX_l \in A$ with $f \notin N(A)$, and let us see that $N_A(f)$ is generated as an ideal by a nilpotent element. If $a_i R \subseteq N(R)$, for all $0 \leq i \leq l$,  Corollary \ref{MSc2.3.21} implies that $f \in N(A)$, which is a contradiction. Hence, there exist $0 \leq i \leq l$ such that $a_i \notin N(R)$, and so there is $c \in N(R)$ such that $N_R(a_i) = cR$. The idea is to show that $N_A(f)= cA$. If $h = c_0 + c_1Z_1 +\cdots + c_tZ_t \in A$, by Proposition \ref{theorem3.4}, we have $fch \in N(R)A = N(A)$. Therefore, we obtain that $ch \in N_A(f)$. On the other hand, let $p = p_0 +p_1Y_1+ \cdots + p_sY_s \in N_A(f)$. For $f= a_0 +a_1X_1+ \cdots + a_lX_l \in A$, we have $fp \in N(A)$. Thus, we obtain that
\begin{center}
    $\displaystyle fp = \sum_{k=0}^{s+l}\left ( \sum_{i+j=k} a_iX_ip_jY_j \right )=\sum_{k=0}^{s+l}\left ( \sum_{i+j=k} a_i\sigma^{\alpha_i}(p_j)X_iY_j + a_ip_{\alpha_i,p_j}Y_j \right ) \in N(R)A$
\end{center}
Therefore, $a_i\sigma^{\alpha_i}(p_j)\in N(R)$ and so $a_ip_j \in N(R)$ for every $i,j$. In particular, we obtain that $p_j \in N_R(a_i) = c R$, and thus there exists $r_j \in R$ such that $p_j=cr_j$, whence $p= p_0 +p_1Y_1+ \cdots + p_sY_s= c(r_0 +r_1Y_1+ \cdots + r_sY_s) \in c A $. Therefore, we have proved that $p \in c A$. 
\end{proof}

\begin{corollary}
[{\cite[Theorem 2.5]{OuyangBirkenmeier2012}}] Let $R$ be a $(\sigma,\delta)$-compatible 2-primal ring. If for each $p\notin N(R)$, $N_R(p)$ is generated as an ideal by a nilpotent element, then for each $f(x)\notin N(R[x;\sigma,\delta])$, $N_{R[x;\sigma,\delta]}(f(x))$ is generated as an ideal by a nilpotent element.	
\end{corollary}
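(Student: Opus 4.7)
The plan is to mimic the structure of the proofs of Theorems \ref{theorem3.4} and \ref{Theorem3.1.6}: identify a witness coefficient of $f$ outside $N(R)$, invoke the single-element hypothesis to extract a nilpotent generator $c$, and then verify that $N_A(f) = cA$ by showing both inclusions. Concretely, since $f \notin N(A)$, Corollary \ref{corollary2.16}(\ref{MSc2.3.21}) guarantees at least one coefficient $a_i$ of $f$ with $a_i \notin N(R)$. Applying the hypothesis to this single element yields a nilpotent $c \in R$ with $N_R(a_i) = cR$, and this $c$ is the candidate generator.

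For the inclusion $cA \subseteq N_A(f)$, I would take an arbitrary $h = c_0 + c_1Z_1 + \cdots + c_tZ_t \in A$ and argue purely from the NI hypothesis: because $c \in N(R)$ and $N(R)$ is an ideal of $R$, every product $cc_k$ lies in $N(R)$, so $ch \in N(R)A = N(A)$ by Corollary \ref{corollary2.16}(\ref{MSc2.3.20}). Since $N(A)$ is itself an ideal of $A$, $fch \in N(A)$, i.e., $ch \in N_A(f)$. Note this step does \emph{not} require any compatibility gymnastics — it only uses that $c$ is nilpotent and that $N(A)$ is a two-sided ideal.

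For the reverse inclusion $N_A(f) \subseteq cA$, I would take $p = p_0 + p_1Y_1 + \cdots + p_sY_s \in N_A(f)$, so $fp \in N(A)$, and invoke Proposition \ref{ReyesSuarez2019-2Theorem 4.7} (which applies because any $(\Sigma,\Delta)$-compatible ring is in particular weakly $(\Sigma,\Delta)$-compatible) to conclude $a_kp_j \in N(R)$ for every pair $(k,j)$. Specializing $k = i$ gives $p_j \in N_R(a_i) = cR$ for every $j$; writing $p_j = cr_j$ assembles into $p = c(r_0 + r_1Y_1 + \cdots + r_sY_s) \in cA$. The main obstacle here is less a clever maneuver than a careful bookkeeping step: translating from the polynomial-level condition $fp \in N(A)$ down to the coefficient-level conditions $a_kp_j \in N(R)$. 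That translation is precisely what Proposition \ref{ReyesSuarez2019-2Theorem 4.7} delivers, and it is where the $(\Sigma,\Delta)$-compatibility and NI hypotheses are genuinely used (via Proposition \ref{Reyes2015Proposition2.9}, the coefficients of $fp$ are sums of $a_k$'s applied against $\sigma$'s and $\delta$'s of $p_j$'s, which compatibility converts back into the clean products $a_kp_j$). A subtle but reassuring point is that even though many other coefficients $a_k$ of $f$ may already lie in $N(R)$, they create no obstruction: the witness $a_i$ alone controls the annihilator since the nilpotent element $c$ (being absorbed by $N(A)$) handles the $N(R)$-coefficients for free.
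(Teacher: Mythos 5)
Your proposal is correct and follows essentially the same route as the paper: the corollary is obtained there as the one-variable specialization of Theorem \ref{theorem3.9}, and your argument faithfully reproduces that theorem's proof (a witness coefficient $a_i \notin N(R)$, a nilpotent $c$ with $N_R(a_i) = cR$, and the two inclusions establishing $N_A(f) = cA$). The only cosmetic differences are that you justify $cA \subseteq N_A(f)$ directly from $ch \in N(R)A = N(A)$ together with the fact that $N(A)$ is an ideal, rather than re-running the compatibility expansion, and that you extract $a_k p_j \in N(R)$ from Proposition \ref{ReyesSuarez2019-2Theorem 4.7} instead of expanding $fp$ coefficientwise; both shortcuts are valid and are used elsewhere in the paper.
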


\begin{example}
\begin{enumerate}
\item[\rm (i)] If $A$ is the skew PBW extension over the ring $\frac{\mathbb{F}_4[z]}{\left \langle z^2 \right \rangle}$, let $p \in \frac{\mathbb{F}_4[z]}{\left \langle z^2 \right \rangle}$ such that $p  \notin N\left(\frac{\mathbb{F}_4[z]}{\left \langle z^2 \right \rangle} \right)$. Notice that $N_{\frac{\mathbb{F}_4[z]}{\left \langle z^2 \right \rangle}}\left(p\right)=  z \frac{\mathbb{F}_4[z]}{\left \langle z^2 \right \rangle}$, where $z \in N\left(\frac{\mathbb{F}_4[z]}{\left \langle z^2 \right \rangle} \right)$.  By using Theorem \ref{theorem3.9} we conclude that the ideal $N_{A}(f)$ is generated by a nilpotent element for any element $f$ where $f \notin N(A)$ and $A=\sigma(R)\left \langle x_{1,0},x_{1,1},x_{1,2},x_{2,0},x_{2,1},x_{2,2} \right \rangle$.
\item[\rm (ii)] In Example \ref{ExampleMatrix}, let $p \in R$ such that $p \notin N(R)$, where $p= \begin{pmatrix} p(t) & q(t)\\0 & p(t) \end{pmatrix}$, for some $p(t), q(t) \in \Bbbk[t]$ and $p(t) \neq 0$. It is easy to see that 
\begin{center}
    $N_R(p)=\left \{ \begin{pmatrix} 0 & q(t)\\0 &0 \end{pmatrix} \ | \ q(t) \in \Bbbk[t]\ \right \} =  \begin{pmatrix} 0 & 1\\0 &0 \end{pmatrix}R  $, 
\end{center}
where $\begin{pmatrix} 0 & 1\\0 &0 \end{pmatrix} \in N(R)$. By using Theorem \ref{theorem3.9} we obtain that the ideal $N_{R[x;\overline{\sigma},\overline{\delta}]}(f)$ is generated by a nilpotent element of $R[x;\overline{\sigma},\overline{\delta}]$, for any element $f \in R[x;\overline{\sigma},\overline{\delta}]$ where $f \notin N(R[x;\overline{\sigma},\overline{\delta}])$.
\end{enumerate}
\end{example}

For a skew PBW extension of endomorphism type over a ring $R$, the following theorem characterizes extensions for which every weak annihilator of an element is generated by a nilpotent element. Our result extends \cite[Theorem 2.6]{OuyangBirkenmeier2012}.

\begin{theorem}\label{Theorem3.18}
If $A= \sigma(R)\left \langle x_1, \dots , x_n \right \rangle$ is a skew PBW extension of endomorphism type over a $\Sigma$-compatible and NI ring $R$, then the following statements are
equivalent:
\begin{enumerate}
    \item[{\rm (1)}] For each $p \notin N(R)$, $N_R(p)$ is generated as an ideal by a nilpotent element.
 \item[{\rm (2)}] For each $f \notin N(A)$, $N_A(f)$ is generated as an ideal
by a nilpotent element. 
\end{enumerate}
\end{theorem}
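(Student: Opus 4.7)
The plan is to observe that $(1) \Rightarrow (2)$ is an immediate consequence of Theorem \ref{theorem3.9}: a $\Sigma$-compatible ring equipped with an endomorphism type extension is trivially $(\Sigma, \Delta)$-compatible with $\Delta = \{0\}$. The substance of the argument therefore lies in the reverse implication $(2) \Rightarrow (1)$, and I will mirror the proof of Theorem \ref{Theorem3.13}, adapting the reasoning from principal right ideals to individual elements.

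Start from $p \in R$ with $p \notin N(R)$. Corollary \ref{corollary2.16} gives $N(A) = N(R)\langle x_1, \dotsc, x_n\rangle$, so $p \notin N(A)$. By hypothesis $(2)$, there exists $f = a_0 + a_1 X_1 + \dotsb + a_l X_l \in N(A)$ with $N_A(p) = fA$, and the same corollary forces each $a_i \in N(R)$. The target equality is $N_R(p) = a_0 R$ (where, if $a_0 = 0$, the right side is the zero ideal, still generated by the nilpotent element $0$). The inclusion $a_0 R \subseteq N_R(p)$ follows because $N(R)$ is an ideal: for any $r \in R$, $p(a_0 r) \in N(R)$, so $a_0 r \in N_R(p)$. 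For the reverse inclusion, take $m \in N_R(p)$; then $pm \in N(R) \subseteq N(A)$ forces $m \in N_A(p) = fA$, so $m = fg$ for some $g = b_0 + b_1 Y_1 + \dotsb + b_s Y_s \in A$. Exploiting that $A$ is of endomorphism type (each $\delta_i = 0$, whence $X_i b_j = \sigma^{\alpha_i}(b_j) X_i$), expand
\[
m = fg = \sum_{i, j} a_i \sigma^{\alpha_i}(b_j) X_i Y_j
\]
and compare constant coefficients on both sides to deduce $m = a_0 b_0 \in a_0 R$.

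The step I expect to cost the most care is the constant-coefficient extraction, because reducing each product $X_i Y_j$ to standard form via Proposition \ref{coefficientes} may produce nontrivial lower-order terms, including constant ones. Handling this exactly as in Theorem \ref{Theorem3.13} --- where one exploits that every $a_i \in N(R)$ together with the NI hypothesis so that the spurious constant contributions lie in $N(R)$ and can be absorbed --- closes the argument. Once the inclusion $N_R(p) \subseteq a_0 R$ is in hand, one concludes $N_R(p) = a_0 R$ with $a_0 \in N(R)$, which is precisely statement $(1)$.
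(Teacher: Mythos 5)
Your argument reproduces the paper's proof essentially verbatim: the implication (1) $\Rightarrow$ (2) is delegated to Theorem \ref{theorem3.9}, and for (2) $\Rightarrow$ (1) you take $f$ with $N_A(p)=fA$, note that every $a_i\in N(R)$ by Corollary \ref{corollary2.16}, and prove $N_R(p)=a_0R$ via the same two inclusions, using $\delta_i=0$ to commute coefficients past monomials.

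The only place you go beyond the paper is the closing remark about extracting the constant coefficient of $fg$, and there your proposed patch is not sound. You are right that this is the delicate step: even in an extension of endomorphism type the products $X_iY_j$ with $i,j\ge 1$ re-expand as $d_{\alpha_i,\beta_j}x^{\alpha_i+\beta_j}+p_{\alpha_i,\beta_j}$ by Proposition \ref{coefficientes}, and $p_{\alpha_i,\beta_j}$ may carry a non-zero constant term because Definition \ref{gpbwextension}(iv) permits $x_jx_i-d_{i,j}x_ix_j$ to have a component in $R$ (the quantized Weyl algebra with $yx=qxy+1$ is of endomorphism type over $\Bbbk$, for instance). Hence the constant coefficient of $fg$ is $a_0b_0+\sum_{i,j\ge 1}a_i\sigma^{\alpha_i}(b_j)c_{ij}$ for certain $c_{ij}\in R$, not $a_0b_0$ alone. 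Observing that the extra summands lie in $N(R)$ does not let you ``absorb'' them: the inclusion you are proving is $N_R(p)\subseteq a_0R$, and $N(R)\subseteq a_0R$ is not available. Nor is there anything in Theorem \ref{Theorem3.13} to appeal to --- the paper's proofs of that theorem and of the present one simply assert $m=a_0b_0$ and pass over the lower-order contributions in silence, so the difficulty you correctly identified is a gap shared with the source rather than one you have repaired. Closing it genuinely requires either an additional hypothesis (for example that the constants $r_0^{(i,j)}$ in Definition \ref{gpbwextension}(iv) vanish, as happens for quasi-commutative extensions) or a different argument showing that these spurious constants land in $a_0R$.
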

\begin{proof}
By Theorem \ref{theorem3.9}, it suffices to show ${\rm (ii)} \Rightarrow {\rm (i)}$. Let $p \in R$ with $p \notin N(R)$, whence $p \notin N(A)$. Thus, there exists $f = a_0 + a_1X_1 + \cdots + a_lX_l \in N(A)$ such that $N_A(p) =fA$. Notice that $f = a_0 +a_1X_1+\cdots+a_lX_l \in N(A)$ and so $a_i \in N(R)$, for all $0 \leq i \leq l$ by Corollary \ref{MSc2.3.21}. We may assume that $a_0 \neq 0$. Now, we show that $N_R(p) = a_0 R$. Since $a_0 \in N(R)$ and $N(R)$ is an ideal of $R$, we obtain $p a_0 R \subseteq N(R)$, that is, $a_0 R \subseteq N_R(p)$. If $m \in N_R(p)$, then, we get that $m \in N_A(p)$. Thus, there exists $g = b_0 +b_1Y_1+\cdots+b_sY_s \in A$ such that
\begin{center}
  \begin{center}
    $\displaystyle m=fg = \sum_{k=0}^{s+l}\left ( \sum_{i+j=k} a_iX_ib_jY_j \right )=\sum_{k=0}^{s+l}\left ( \sum_{i+j=k} a_i\sigma^{\alpha_i}(b_j)X_iY_j \right )$
\end{center}  
\end{center}
Hence, we have $m = a_0b_0 \in a_0 R$, and thus $N_R(p) \subseteq a_0R$. Therefore, we conclude that $N_R(p) = a_0 R$ where $a_0 \in N(R)$.
\end{proof}

\section{Nilpotent Associated Prime Ideals}\label{NAP}

It is well-known that associated prime ideals are an important tool in areas such as commutative algebra and algebraic geometry concerning the primary decomposition of ideals (e.g., Eisenbud \cite[Chapter 3]{Eisenbud1995} and Eisenbud and Harris \cite[Section II.3.3]{EisenbudHarris2000}). Briefly, for a ring $R$ and a right $R$-module $N_R$, the {\em right annihilator} of $N_R$ is denoted by $r_R(N_R)$ = $\left \{r \in R\mid Nr = 0\right \}$, and $N_R$ is said to be {\em prime} if $N_R \neq 0$ and $r_R(N_R) = r_R(N_R^{'})$, for every non-zero submodule $N_R^{'} \subseteq N_R$ \cite[Definition 1.1]{Annin2004}. If $M_R$ is a right $R$-module, an ideal $P$ of $R$ is called an {\em associated prime} of $M_R$ if there exists a prime submodule $N_R \subseteq M_R$ such that $P = r_R(N_R)$. The set of associated primes of $M_R$ is denoted by ${\rm Ass}(M_R)$ \cite[Definition 1.2]{Annin2004}. One of the most important results on these ideals was proved by Brewer and Heinzer \cite{BrewerHeinzer1974} who showed that for a commutative ring $R$, the associated primes ideals of the commutative polynomial ring $R[x]$ are all extended, that is, every $P\in {\rm Ass}(R[x])$ may be expressed as $P = P_0[x]$, where $P_0 = P \cap R \in {\rm Ass}(R)$ (Faith \cite{Faith2000} presented a proof of this fact using different algebraic techniques). In several papers, Annin \cite{Annin, Annin2002, Annin2004} extended the result above to the noncommutative setting of Ore extensions, while Ni\~no et al. \cite{NinoRamirezReyes2020} formulated this result for skew PBW extensions. 

\medskip

Since Ouyang and Birkenmeier \cite{OuyangBirkenmeier2012} introduced the notion of nilpotent associated prime as a generalization of associated prime, and described all nilpotent associated primes of the skew polynomial ring $R[x;\sigma,\delta]$ in terms of the nilpotent associated primes of $R$, a natural task is to investigate this kind of ideals in the context of skew PBW extensions with the aim of generalizing the results formulated in \cite{OuyangBirkenmeier2012}. This is precisely the purpose of this section. 

\medskip

We start by recalling some definitions presented by Ouyang and Birkenmeier \cite{OuyangBirkenmeier2012}.

\begin{definition} Let $R$ be a ring. \label{definitionnilpotentprime}
\begin{enumerate}
    \item [\rm (i)] (\cite[Definition 3.1]{OuyangBirkenmeier2012}) Let $I$ be a right ideal of a non-zero ring $R$. $I$ is called a {\em right quasi-prime ideal} if $I\nsubseteq N(R)$ and $N_R(I)=N_R(I')$, for every right ideal $I' \subseteq I$ and $I'\nsubseteq N(R)$.
    \item [\rm (ii)] (\cite[Definition 3.2]{OuyangBirkenmeier2012}) Let $N(R)$ be an ideal of a ring $R$. An ideal $P$ of $R$ is called a {\em nilpotent associated prime} of $R$ if there exists a right quasi-prime ideal $I$ such that $P = N_R(I)$. The set nilpotent associated primes of $R$ is denoted by ${\rm NAss}(R)$.
\end{enumerate} 
\end{definition}

An important concept in the characterization of nilpotent associated prime ideals is the notion of good polynomial. These polynomials were first used by Shock \cite{Shock1972} with the aim of proving that uniform dimensions of a ring $R$ and the polynomial ring $R[x]$ are equal (Annin \cite{Annin, Annin2002, Annin2004} also considered such polynomials in his study of associated prime ideals of Ore extensions). A polynomial $f(x) \in R[x]$ is called \textit{good polynomial} if it has the property that the right annihilators of the coefficients of $f(x)$ are equal. Shock proved that, given any non-zero polynomial $f(x) \in R[x]$, there exists $r \in R$ such that $f(x)r$ is good \cite[Proposition 2.2]{Shock1972}. Ouyang and Birkenmeier \cite[Definition 3.3]{OuyangBirkenmeier2012} introduced the notions of nilpotent degree and nilpotent good polynomial to study the nilpotent associated prime ideals of Ore extensions. Motivated by their ideas, we present the following definitions in the setting of skew PBW extensions. 

\begin{definition}
Let $A = \sigma(R)\langle x_1,\dotsc, x_n\rangle$ be a skew PBW extension over a ring $R$, and consider an element $f = r_0 + r_1X_1 + \cdots + r_kX_k + \cdots + r_lX_l \notin N(R)A$, where $X_l \succ X_{l-1} \succ \dots \succ X_k \succ X_{k-1} \succ \cdots \succ X_1$, ${\rm lm}(f) = x^{\alpha_l}$ and ${\rm lc}(f) = r_l$.
\begin{enumerate}
    \item [\rm (i)]  If $r_k \notin N(R)$ and $r_i \in N(R)$, for all $i > k$, then we say that the {\em nilpotent degree} of $f$ is $k$, which is denoted as ${\rm Ndeg}(f)$. If $f \in N(R)A$, then we define ${\rm Ndeg}(f) =-1$.
    \item [\rm (ii)] Suppose that the nilpotent degree of $f$ is $k$. If $N_R(r_k) \subseteq N_R(r_i)$, for all $i \leq k$, then we say that $f$ is a {\em nilpotent good polynomial}.
\end{enumerate}
\end{definition}

The following result generalizes \cite[Lemma 3.1]{OuyangBirkenmeier2012} and \cite[Lemma 2.16]{OuyangLiu2012}.

\begin{theorem}\label{proposition3.4}
Let $A = \sigma(R)\langle x_1,\dotsc, x_n\rangle$ be a skew PBW extension over a $(\Sigma, \Delta)$-compatible and NI ring $R$. For any $ f = r_0 + r_1X_1 + \cdots + r_kX_k + \cdots + r_lX_l \notin N(R)A$, there exists $r \in R$ such that $fr$ is a nilpotent good polynomial.
\end{theorem}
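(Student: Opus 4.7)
The plan is to induct on the nilpotent degree $k := {\rm Ndeg}(f)$, which is a well-defined non-negative integer since $f \notin N(R)A$. When $k = 0$ there is nothing to show: the only potentially non-nilpotent coefficient is $r_0$, so the goodness condition $N_R(r_0) \subseteq N_R(r_i)$ for $i \leq 0$ holds trivially and one takes $r = 1$. For the inductive step, if $f$ is already good, again take $r = 1$; otherwise, failure of goodness produces some index $i < k$ with $N_R(r_k) \not\subseteq N_R(r_i)$, hence an $s \in R$ with $r_k s \in N(R)$ but $r_i s \notin N(R)$. Set $j := \max\{i < k : r_i s \notin N(R)\}$.

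The heart of the argument is to show that $fs \notin N(A)$ and ${\rm Ndeg}(fs) = j$, so that applying the inductive hypothesis to $fs$ yields some $s'$ for which $f(ss')$ is a nilpotent good polynomial; then $r = ss'$ finishes the proof. To compute $fs$, I would expand each $X_m s$ by Proposition \ref{Reyes2015Proposition2.9}: the leading term contributes $\sigma^{\alpha_m}(s) X_m$, while every lower-order term carries a coefficient obtained from $s$ by iterated compositions of maps in $\Sigma$ and $\Delta$. Since $R$ is NI and $(\Sigma,\Delta)$-compatible, $N(R)$ is a $(\Sigma,\Delta)$-compatible ideal, and Proposition \ref{PropertynilR} gives: whenever $r_m s \in N(R)$, every product $r_m \sigma^{\theta}(\delta^{\beta}(s))$ lies in $N(R)$, so every contribution of $r_m X_m s$ to any coefficient of $fs$ is nilpotent.

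Because the monomials appearing in $X_m s$ are $\preceq X_m$, the coefficient of $X_i$ in $fs$ receives contributions only from indices $m \geq i$. For $i > j$ all such indices satisfy $r_m s \in N(R)$—either by the maximality of $j$ when $j < m < k$, or because $r_m \in N(R)$ (and hence $r_m s \in N(R)$) when $m \geq k$—so this coefficient lies in $N(R)$. For $i = j$ the coefficient equals $r_j \sigma^{\alpha_j}(s)$ plus a sum of nilpotent contributions from $m > j$; $\Sigma$-compatibility forces $r_j \sigma^{\alpha_j}(s) \notin N(R)$ from $r_j s \notin N(R)$, and additive closedness of $N(R)$ (guaranteed by the NI hypothesis) keeps the full coefficient outside $N(R)$. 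This simultaneously yields $fs \notin N(R)A = N(A)$ via Corollary \ref{corollary2.16} and pins ${\rm Ndeg}(fs)$ at $j < k$, closing the induction. The main obstacle is exactly this bookkeeping: one must verify that every cross term produced in the skew PBW reduction of $r_m X_m s$ is nilpotent when $r_m s \in N(R)$, which requires the ideal-level compatibility of Proposition \ref{PropertynilR} rather than the zero-ideal version of Proposition \ref{Proposition3.8}.
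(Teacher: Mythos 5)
Your argument is correct and takes essentially the same route as the paper's proof: both extract a witness element ($s$ here, $b$ in the paper) from the failure of goodness, use Proposition \ref{Reyes2015Proposition2.9} together with the $(\Sigma,\Delta)$-compatibility of the ideal $N(R)$ (Proposition \ref{PropertynilR}) to show that multiplying by it makes the top coefficient nilpotent while keeping the product outside $N(R)A$, and then descend --- you by induction on the nilpotent degree, the paper by a minimal counterexample, which is the same device. Your explicit choice of the maximal index $j$ with $r_j s \notin N(R)$ makes the bookkeeping of which coefficient becomes the new top non-nilpotent one somewhat more transparent than the paper's ``nilpotent degree at most $k-1$'' claim, but the underlying mechanism is identical.
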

\begin{proof}
Let us assume that the result is false and suppose that $f = r_0 + r_1X_1 + \dotsb + r_kX_k + \cdots +r_lX_l \notin N(A)$ is a counterexample of minimal nilpotent degree ${\rm Ndeg}(f) = k$, that is, $fr$ is not a nilpotent good polynomial. In particular, if $r=1$, we have that $f$ is not a nilpotent good polynomial. Hence, there exists $i < k$ such that $N_R(r_k) \nsubseteq N_r(r_i)$. Thus, we can find $b \in R$ such that $b\in N_R(r_k)$ and $b \notin N_R(r_i)$, whence $r_ib \notin N(R)$ and $r_kb \in N(R)$.

Notice that the degree $k$ coefficient of $fb$ is $r_k\sigma^{\alpha_k}(b) + \sum_{i=k+1}^l r_i p_{\alpha_i,b}$. Now, $r_k \sigma^{\alpha_k}(b) \in N(R)$ due to the $(\Sigma, \Delta)$-compatibility of $R$. On the other hand, we have ${\rm Ndeg}(f) = k$, whence $r_i \in N(R)$, for all $i > k$. This means that $r_ip_{\alpha_i,b} \in N(R)$, for all $i > k$ and so $\sum_{i=k+1}^l r_i p_{\alpha_i,b} \in N(R)$ since $N(R)$ is an ideal of $R$. Therefore, $fb$ has nilpotent degree at most $k-1$. Since $r_ib \notin N(R)$, we have $fb \notin N(R)A$. By the minimality of $k$, there exists $c \in R$ with $fbc$ a nilpotent good polynomial. However, this contradicts the fact that $f$ is a counterexample, since $bc \in R$ and $f(bc)$ is a nilpotent good polynomial. This concludes the proof. 
\end{proof}

The following theorem characterizes the nilpotent associated primes ideals in a skew PBW extension over a $(\Sigma, \Delta)$-compatible and NI ring. Later, we state the different results from the literature that are corollaries of Theorem \ref{Nilpotentprimes}.

\begin{theorem}\label{Nilpotentprimes}
If $A$ is a skew PBW extension over a $(\Sigma, \Delta)$-compatible and NI ring $R$, then
\[
{\rm NAss}(A) = \{PA\mid P\in {\rm NAss}(R)\}.
\]
\end{theorem}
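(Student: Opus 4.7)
The idea is to use the bijection between weak annihilators in $R$ and $A$ (Theorem \ref{Theorem3.4}) as the backbone, and Theorem \ref{proposition3.4} (existence of nilpotent good polynomials after right multiplication) as the key technical device for the nontrivial direction. I would prove the two inclusions separately.

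For $\supseteq$, given $P = N_R(I) \in \mathrm{NAss}(R)$ with $I$ a right quasi-prime ideal of $R$, I would show that $IA$ is a right quasi-prime ideal of $A$ whose weak annihilator equals $PA$. That $IA$ is a right ideal of $A$ follows because right multiplication in $A$ (described by Proposition \ref{Reyes2015Proposition2.9}) produces coefficients of the form $b_i\sigma^{\alpha_i}(c_j)$ plus terms whose coefficients are left multiples of $b_i \in I$; all of these sit in the right ideal $I$ of $R$. Corollary \ref{corollary2.16} gives $IA \nsubseteq N(A)$ from $I \nsubseteq N(R)$, and Theorem \ref{Theorem3.4} yields $N_A(IA) = N_R(I)A = PA$. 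Now, for any right ideal $J' \subseteq IA$ with $J' \nsubseteq N(A)$, the right ideal $I'$ of $R$ generated by the coefficient set $C_{J'}$ satisfies $I' \subseteq I$ and $I' \nsubseteq N(R)$ (again by Corollary \ref{corollary2.16}); the quasi-primality of $I$ gives $N_R(I') = N_R(I)$, and two applications of Theorem \ref{Theorem3.4} produce $N_A(J') = N_R(C_{J'})A = N_R(I')A = N_R(I)A = N_A(IA)$.

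For $\subseteq$, given $Q = N_A(J) \in \mathrm{NAss}(A)$ with $J$ right quasi-prime in $A$, Theorem \ref{Theorem3.4} already yields $Q = N_R(C_J)A$, so setting $P := N_R(C_J)$ it suffices to produce a right quasi-prime ideal $I \subseteq R$ with $N_R(I) = P$. I would take $I$ to be the right ideal of $R$ generated by $\{\mathrm{lc}(f) : f \in J \text{ is a nilpotent good polynomial}\}$. Theorem \ref{proposition3.4} guarantees such polynomials exist in $J$ and that their leading (nonnilpotent) coefficients lie outside $N(R)$, so $I \nsubseteq N(R)$. The defining property of a nilpotent good polynomial—$N_R(\mathrm{lc}(f)) \subseteq N_R(r_i)$ for $i \leq \mathrm{Ndeg}(f)$ and $r_i \in N(R)$ otherwise—implies $N_R(\mathrm{lc}(f)) = N_R(C_f)$ for every such $f$. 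Combining this with the observation that for any $g \in J \setminus N(A)$ the coefficients of a nilpotent good $gr \in J$ are built from those of $g$ via $\sigma^\alpha$ and $\delta^\beta$ (cf.\ Proposition \ref{Reyes2015Proposition2.9}), and invoking $(\Sigma,\Delta)$-compatibility together with the NI hypothesis (Propositions \ref{Proposition3.8} and \ref{PropertynilR}), one deduces $N_R(I) = N_R(C_J) = P$.

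The main obstacle is verifying that $I$ is right quasi-prime. Given a right ideal $I' \subseteq I$ with $I' \nsubseteq N(R)$, the plan is to lift $I'$ to a sub-right-ideal of $J$ by setting $J' := \{f \in J : C_f \subseteq I'\}$, which is a right ideal of $A$ by the coefficient calculus of Proposition \ref{Reyes2015Proposition2.9} together with the fact that $I'$ is a right ideal of $R$. The delicate point is showing $J' \nsubseteq N(A)$: starting from a nonnilpotent $a \in I'$, express $a$ as a right $R$-combination of leading coefficients of nilpotent good polynomials in $J$, assemble the analogous combination $h \in J$, and apply Theorem \ref{proposition3.4} once more to get a nilpotent good element of $J$ whose coefficients remain inside $I'$ (using the right-ideal property of $I'$ and the $(\Sigma,\Delta)$-compatibility to control the lower coefficients produced by the $p_{\alpha,r}$-terms). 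Once $J' \nsubseteq N(A)$ is in hand, the quasi-primality of $J$ forces $N_A(J') = N_A(J)$; Theorem \ref{Theorem3.4} then gives $N_R(C_{J'}) = N_R(C_J) = N_R(I)$, and since $C_{J'} \subseteq I' \subseteq I$ we conclude $N_R(I') = N_R(I)$, finishing the verification.
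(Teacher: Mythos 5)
Your $\supseteq$ direction is sound and is essentially the paper's argument: you verify that $IA$ is right quasi-prime with $N_A(IA)=N_R(I)A$ via Theorem \ref{Theorem3.4}, and you handle sub-right-ideals of $IA$ by passing to the right ideal of $R$ generated by their coefficients, exactly as in the paper. The gap is in the $\subseteq$ direction, and it is created by your choice of candidate ideal $I$, namely the right ideal generated by the nilpotent-degree coefficients of \emph{all} nilpotent good polynomials of $J$ (note also that this distinguished coefficient $r_{{\rm Ndeg}(f)}$ is not ${\rm lc}(f)$ in general, since the coefficients above it may be nonzero nilpotents). Already the identity $N_R(I)=N_R(C_J)$ is not proved by what you sketch: if $gr$ is nilpotent good and $b$ weakly annihilates its coefficients, you only learn that expressions of the form $g_i\sigma^{\alpha_i}(r)+\sum_{j>i}g_j\,p_{\alpha_j,r}$ multiply $b$ into $N(R)$, and neither $\Sigma$-compatibility (which governs two-factor products $a\sigma^{\alpha}(b)$ only) nor the NI hypothesis lets you peel off the twisted factor of $r$ to conclude $g_ib\in N(R)$. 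The identity is in fact true, but the only accessible proof goes through the quasi-primality of $J$: one shows $N_A(J)=N_A(mA)=N_R(m_kR)A$ for a \emph{single} nilpotent good $m\in J\setminus N(A)$ with distinguished coefficient $m_k$, which is the paper's computation and which you do not invoke at this step.

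The more serious failure is the verification that $I$ is right quasi-prime. A non-nilpotent $a\in I'$ is a mixed combination $a=\sum_j m^{(j)}_{k_j}s_j$ of distinguished coefficients of \emph{different} nilpotent good polynomials $f_j\in J$, and the proposed lift $h=\sum_j f_js_j$ fails on both counts: its coefficients are sums of terms $(f_j)_i\sigma^{\alpha}(s_j)$ and $(f_j)_i\,p_{\alpha,s_j}$, so they involve \emph{all} coefficients of the $f_j$ and need not lie in $I'$ (so $h\notin J'$), and no coefficient of $h$ equals $a$ because the $s_j$ are twisted by $\sigma$'s and contaminated by $\delta$-terms, so there is no reason to have $h\notin N(A)$. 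Hence $J'\nsubseteq N(A)$ is not established, and without it the appeal to quasi-primality of $J$ collapses; nor does quasi-primality of a sum of right ideals follow formally from quasi-primality of the summands. The paper avoids all of this by shrinking first: quasi-primality of $J$ reduces to the principal right ideal $mA$ of one nilpotent good $m$, and the candidate in $R$ is the \emph{principal} right ideal $m_kR$. Then any right ideal $Q\subseteq m_kR$ with $Q\nsubseteq N(R)$ contains some $m_ka\notin N(R)$, the one-element lift $m(m_ka)\in J$ is non-nilpotent because $m_k(m_ka)\notin N(R)$ (an NI-ring computation combined with Proposition \ref{ReyesSuarez2019-2Theorem 4.7}), and the set $W=\{mr\mid r\in Q\}$ generates a non-nilpotent sub-right-ideal of $mA$ to which quasi-primality of $J$ applies. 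That single-generator control is exactly what your mixed sums lack; replacing your $I$ by $m_kR$ for one nilpotent good $m\in J$ repairs the argument and recovers the paper's proof.
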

\begin{proof} With the aim of establishing the desired equality, we proof the two implications.
Let $P \in {\rm NAss}(R)$. By Definition \ref{definitionnilpotentprime} (ii), there exists a right ideal $I \nsubseteq N(R)$ with $I$ a right quasi-prime ideal of $R$ and $P = N_R(I)$. Let us show that $PA = N_A(IA)$ and also that $IA$ is a quasi-prime ideal. We show first that $PA = N_A(IA)$. Let $i = a_0 +a_1X_1+ \cdots + a_mX_m \in IA$ and let $f = b_0 +b_1Y_1+ \cdots + b_lY_l \in PA $. Then,
\begin{center}
    $\displaystyle if = \sum_{k=0}^{m+l}\left ( \sum_{i+j=k}a_iX_ib_jY_j\right )= \sum_{k=0}^{m+l}\left ( \sum_{i+j=k}a_i\sigma^{\alpha_i}(b_j) X_iY_j + a_ip_{\alpha_i,b_j}Y_j\right )$
\end{center}
Since $a_ib_j \in N(R)$ for every $i,j$, Proposition \ref{PropertynilR} implies that $a_i\sigma^{\alpha_i}(b_j) \in N(R)$, Furthermore, the polynomial $p_{\alpha_i,b_j}$ involves elements obtained evaluating $\sigma$'s and $\delta$'s (depending on the coordinates of $\alpha_i$) in the element $b_j$ by Proposition \ref{Reyes2015Proposition2.9}. This means that $a_ip_{\alpha_i,b_j}\in N(R)$ for every $i,j$ and thus, we obtain $if\in N(R)A = N(A)$. Hence, $PA \subseteq N_A(IA)$. 

Conversely, let $f = b_0 +b_1Y_1+ \cdots+ b_mY_m \in N_A(IA)$, then $if \in N(A) = N(R)A$, for all $i = a_0 +a_1X_1+ \cdots + a_lX_l \in IA$, whence $a_i\sigma^{\alpha_i}(b_j) \in N(R)$, which implies that $a_ib_j \in N(R)$, by Proposition \ref{PropertynilR}. Since $a_i \in I$, we have  $b_j \in N_R(I) = P$. Therefore, we get $f \in PA $ and thus $N_A(IA) \subseteq PA $. Hence, we conclude that $PA = N_A(I A)$.

Since the ideal $I$ is a right quasi-prime ideal, we have $I \nsubseteq N(R)$, which implies that $IA \nsubseteq N(A)$. Let us show that for any right ideal $U$ of $R$ it is satisfied that, if $U \nsubseteq N(A)$ and $U \subseteq IA$, then $N_A(U)=N_A(IA)$. Let us see first that $N_A(I A) \subseteq N_A(U)$. If $f= a_0 +a_1X_1+ \cdots + a_mX_m \in N_A(IA)$, this means that $if \in N(A)$, for all $i \in I A$. In particular, since $U \subseteq IA$, then  $if \in N(A)$, for all $i \in U$, whence $f \in N_A(U)$. Hence, we have $N_A(I A) \subseteq N_A(U)$.

Conversely, let $C_U \subseteq R$ consisting of all coefficients of elements of $U$. Let us first consider $P'$ the right ideal of $R$ generated by $C_U$. Since $U \nsubseteq N(A) = N(R) A$, this means that $C_U \nsubseteq N(R)$, and hence $P' \subseteq I$ and $P'\nsubseteq N(R)$. Thus, as $I$ is a right quasi-prime ideal this implies that $N_R(P')=N_R(I) = P$. Now, if $f = a_0+ a_1X_1+ \cdots +a_mX_m \in N_A(U)$ and $u = u_0 + u_1Y_1 + \cdots +u_tY_t \in U$, then $uf \in N(A)$, whence $u_i\sigma^{\beta_{j}} (a_j) \in N(R)$ and therefore $u_ia_j \in N(R)$, for all $0 \leq i \leq t$, $0 \leq j \leq m$. Since $R$ is a NI ring, $N(R)$ is an ideal, $u_ia_j \in N(R)$ implies $a_ju_i \in N(R)$ and thus $u_iRa_j \in N(R)$ gives that $(u_iRa_j)^2 \in N(R)$. Therefore, 
\[
a_j \in N_R(P') = N_R(I) = P, \quad \text{for all} \quad 0 \leq j \leq m.
\]

Let $i = b_0 + b_1Z_1+ \cdots + b_rZ_r \in IA$. We have $b_ma_j \in N(R)$, and so $b_m\sigma^{\alpha}(\delta^{\beta}(a_j))$ and $a_m\delta^{\beta}(\sigma^{\alpha}(a_j))$ are elements of $N(R)$, for every $\alpha, \beta \in \mathbb{N}^n$. Therefore, we get that $if \in N(R)A = N(A)$, which implies that $f \in N_A(IA)$. Hence, we conclude $N_A(U) \subseteq N_A(IA)$. Thus, we have proved that $PA = N_A(IA)$ and also that $IA$ is a quasi-prime ideal.

Let $I \in {\rm NAss}(A)$. By Definition \ref{definitionnilpotentprime} (ii), there exists a right ideal $J \nsubseteq N(A)$ with $J$ a right quasi-prime ideal of $A$ and $I = N_A(J)$. Let $m = m_0 +m_1X_1+\cdots+m_kX_k
 +\cdots+m_nX_n \notin N(A) = N(R)A$ and $m \in J$. Since $J \nsubseteq N(A)$, we may assume that $m$ is nilpotent good and ${\rm Ndeg}(m) = k$, by Theorem \ref{proposition3.4}. We consider $J_0 = m A$ the principal right ideal of $A$ generated by $m$. Since $m \notin N(A) = N(R) A$ this implies that  $J_0 = m A \nsubseteq N(R)A = N(A)$, whence $N_A(J) = N_A(J_0) = N_A(m A) = I$ because $J$ is quasi-prime ideal. Now, we consider the right ideal $m_k R$, and let us denote $U = N_R(m_k R)$.
 
Let us prove first that $I = UA$. Let $g = b_0 +b_1Y_1+\cdots+b_lY_l \in UA$. Since $b_j \in U$, then $m_kRb_j \in N(R)$ for all $0 \leq j \leq l$. Furthermore, $m$ is nilpotent good polynomial and ${\rm Ndeg}(m) = k$, hence $m_iRb_j \in N(R)$, for all $0 \leq i \leq k$, and $0 \leq j \leq l$. On the other hand, for all $i > k$, $m_i \in N(R)$. Thus, we get $m_iRb_j \in N(R)$, for all $0 \leq i \leq n$ and $0 \leq j \leq l$. Now, for any element $h \in A$ where $h = h_0 +h_1Z_1+ \cdots+ h_pZ_p $, it is satisfied that $m_ih_db_j \in N(R)$, for all $0 \leq i \leq n$, $0 \leq d \leq p$ and $0 \leq j \leq l$. Therefore, by $(\Sigma,\Delta)$-compatibility of $R$, we obtain $mhg \in N(A)$. This implies that $g \in N_A(m A) = I$, and so $UA \subseteq I$. Conversely, let $g = b_0 +b_1Y_1+\cdots+b_lY_l \in I$. Since $mRg \in N(A)$, it follows that $m_iRb_j \in N(R)$, for all $0 \leq i \leq n$, and $0 \leq j \leq l$. Thus, we have that $b_j \in N_R(m_k R)$, for all $0 \leq j \leq l$, and so $g \in UA$. Hence, we conclude $I \subseteq UA$ which implies that $I = UA$.

Now, let us show that $m_kR$ is a quasi-prime ideal. Let $m_k R$ the principal right ideal of $A$ generated by $m_k$. Since $m_k \notin N(R)$, we have $m_k R \nsubseteq N(R)$. The idea is to show that for any right ideal $Q\subseteq m_k R$ with $Q\nsubseteq N(R)$, it follows that $N_R(Q) = N_R(m_k R)$. Assume that a right ideal $Q \subseteq m_k R$, and $Q \nsubseteq N(R)$. Then $N_R(m_k R) \subseteq N_R(Q)$ by Proposition \ref{Proposition2.1ouyang}. Now, we show that $N_R(Q) \subseteq N_R(m_k R)$. Let $W$ be the following set $W = \left \{mr \ | \ r \in Q\right \}$, and let $WA$ the right ideal of $A$ generated by $W$.

First, notice that $WA \subseteq m A$. Since $Q \nsubseteq N(R)$, there exists $a \in R$ such that $m_ka \in Q$ and $m_ka \notin N(R)$. If $m_k(m_ka) \in N(R)$, then we have $m_ka \in N(R)$ which contradicts to the fact that $m_ka \notin N(R)$. Thus $m_k(m_ka) \notin N(R)$, and hence $m(m_ka) \notin N(A)$, by Proposition \ref{ReyesSuarez2019-2Theorem 4.7}. This implies that $WA \nsubseteq N(A)$. Since $J$ is quasi-prime ideal, we obtain $N_A(WA) = N_A(m A) = I$.

Suppose $q \in N_R(Q)$. Then $rq \in N(R)$, for each $r \in Q$. Now, for any $mrf \in WA$ where $f = a_0 +a_1Y_1+\cdots+a_lY_l \in A$, the term of $mr f$ is $m_iX_ira_jY_j$. The idea is to show that $m_iX_ira_jY_j \in N(R)$. Since $rq \in N(R)$ and $N(R)$ is an ideal, it follows that
\begin{center}
$\displaystyle rq \in N(R) \Rightarrow qr \in N(R) \Rightarrow ra_j(qr)a_jq \in N(R) \Rightarrow ra_jq \in N(R)$.    
\end{center}
If $ra_jq \in N(R)$, then  $m_ira_jq \in N(R)$. Thus, due to the $(\Sigma, \Delta)$-compatibility of $R$, we have that $m_iX_ira_jY_jq \in N(R)A$ which implies that $mrfq \in N(R)A = N(A)$. Hence, for any $\sum mr_if_i \in WA$ it follows that $\sum (mr_if_i)q \in N(A)$. Therefore, $q \in N_A(WA) = I = UA$, and so $q \in U = N_R(m_k R)$. Thus, $N_R(Q) \subseteq N_R(m_k R)$, and this implies that $N_R(Q) = N_R(m_k R)$. Hence, we conclude that $m_k R$ is quasi-prime ideal.
\end{proof}
\begin{corollary}[{\cite[Theorem 3.1]{OuyangBirkenmeier2012}}]
Let $R$ be a $(\sigma,\delta)$-compatible 2-primal ring. Then
\[
{\rm NAss}(R[x;\sigma,\delta]) = \{P[x;\sigma,\delta]\mid P\in {\rm NAss}(R)\}.
\]
\end{corollary}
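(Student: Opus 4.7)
The plan is to establish the two inclusions of the claimed equality separately, leaning on Corollary \ref{corollary2.16}, Propositions \ref{PropertynilR} and \ref{Reyes2015Proposition2.9}, Proposition \ref{ReyesSuarez2019-2Theorem 4.7}, and Theorem \ref{proposition3.4}. For the containment $\{PA \mid P\in {\rm NAss}(R)\}\subseteq {\rm NAss}(A)$, I take $P\in {\rm NAss}(R)$ with a right quasi-prime witness $I$ of $R$ so that $P=N_R(I)$. I would show (a) $N_A(IA)=PA$ and (b) $IA$ is a right quasi-prime ideal of $A$. Step (a) is a bilateral computation: given $f\in IA$ and $g\in PA$, expand $fg$ via Proposition \ref{Reyes2015Proposition2.9} into summands of the form $a_i\sigma^{\alpha_i}(b_j)X_iY_j$ and $a_ip_{\alpha_i,b_j}Y_j$; since each $a_ib_j\in N(R)$, the $(\Sigma,\Delta)$-compatibility (Proposition \ref{PropertynilR}) and the fact that $N(R)$ is an ideal push every coefficient into $N(R)$, so $fg\in N(R)A=N(A)$. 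The reverse inclusion uses Proposition \ref{ReyesSuarez2019-2Theorem 4.7} applied to $i f\in N(A)$ for $i\in IA$ to conclude coefficient-wise nilpotence of products $a_ib_j$, which places the coefficients of $f$ in $N_R(I)=P$.

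For step (b), by Corollary \ref{corollary2.16}, $I\nsubseteq N(R)$ forces $IA\nsubseteq N(A)$. Given a right ideal $U\subseteq IA$ with $U\nsubseteq N(A)$, form the coefficient set $C_U\subseteq R$ and the right ideal $P'=C_UR$. Then $P'\subseteq I$ and $P'\nsubseteq N(R)$ (since $U\nsubseteq N(R)A$), so the quasi-primality of $I$ yields $N_R(P')=N_R(I)=P$. Testing weak-annihilator membership coefficient-by-coefficient, exactly as in the proof of Theorem \ref{Theorem3.4}, I get $N_A(U)=PA=N_A(IA)$, establishing quasi-primality of $IA$.

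For the reverse containment ${\rm NAss}(A)\subseteq\{PA\mid P\in {\rm NAss}(R)\}$, I take $I\in {\rm NAss}(A)$ with a right quasi-prime witness $J\subseteq A$. Pick any $h\in J\setminus N(A)$ and apply Theorem \ref{proposition3.4} to obtain $r\in R$ such that $m:=hr\in J$ is a nilpotent good polynomial with ${\rm Ndeg}(m)=k$, say $m=m_0+m_1X_1+\dotsb+m_nX_n$ with $m_k\notin N(R)$, $m_i\in N(R)$ for $i>k$, and $N_R(m_k)\subseteq N_R(m_i)$ for $i\le k$. Since $mA\subseteq J$ and $mA\nsubseteq N(A)$, the quasi-primality of $J$ gives $N_A(J)=N_A(mA)=I$. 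The plan now is to set $P:=N_R(m_kR)$ and prove both $I=PA$ and that $m_kR$ is a right quasi-prime ideal of $R$. For $I=PA$: if $g=b_0+b_1Y_1+\dotsb+b_lY_l\in PA$, then $m_kRb_j\in N(R)$ for all $j$, and the nilpotent-good-polynomial hypothesis upgrades this to $m_iRb_j\in N(R)$ for $i\le k$, while $m_i\in N(R)$ handles $i>k$; using NI and $(\Sigma,\Delta)$-compatibility to absorb all cross-terms produced by the expansion in Proposition \ref{Reyes2015Proposition2.9}, one obtains $mAg\subseteq N(A)$, hence $g\in N_A(mA)=I$. The converse is obtained by observing that for $g\in N_A(mA)$ the product $mRg\subseteq N(A)$ projects down to $m_iRb_j\in N(R)$ for every $i,j$, so in particular $b_j\in N_R(m_kR)$.

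Finally, to show $m_kR$ is right quasi-prime in $R$, take a right ideal $Q\subseteq m_kR$ with $Q\nsubseteq N(R)$; one inclusion $N_R(m_kR)\subseteq N_R(Q)$ is from Proposition \ref{Proposition2.1ouyang}. For the other, form $W=\{mr\mid r\in Q\}\subseteq mA\subseteq J$. Choose $a\in R$ with $m_ka\in Q\setminus N(R)$; the NI property gives $m_k(m_ka)\notin N(R)$, hence by Proposition \ref{ReyesSuarez2019-2Theorem 4.7}, $m(m_ka)\notin N(A)$, so $WA\nsubseteq N(A)$. Quasi-primality of $J$ then yields $N_A(WA)=N_A(mA)=I=PA$, and a coefficient-level argument, combined once more with the NI property and $(\Sigma,\Delta)$-compatibility, shows $N_R(Q)\subseteq N_R(m_kR)$, completing quasi-primality. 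The main obstacles I anticipate are bookkeeping the lower-order ``correction'' terms $p_{\alpha_i,b_j}$ produced by Proposition \ref{Reyes2015Proposition2.9} to ensure they land in $N(R)$ (this genuinely uses both $\Sigma$- and $\Delta$-compatibility together with NI, not just $\Sigma$-compatibility), and carefully using the good-polynomial reduction to replace $J$ with the principal ideal $mA$ without losing the quasi-prime hypothesis.
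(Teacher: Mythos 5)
Your argument is correct, but you have effectively re-proved the paper's general result rather than deriving the stated corollary from it. In the paper this statement is an immediate consequence of Theorem \ref{Nilpotentprimes}: an Ore extension $R[x;\sigma,\delta]$ over a $(\sigma,\delta)$-compatible ring is precisely the one-variable case of a skew PBW extension over a $(\Sigma,\Delta)$-compatible ring, and a 2-primal ring is automatically NI (since $P(R)\subseteq N^{*}(R)\subseteq N(R)$), so the equality ${\rm NAss}(R[x;\sigma,\delta])=\{P[x;\sigma,\delta]\mid P\in{\rm NAss}(R)\}$ follows by direct specialization with no further work. What you have written out --- the two inclusions via quasi-prime witnesses, the passage to the coefficient ideal $P'=C_UR$, the reduction to a nilpotent good polynomial $m$ via Theorem \ref{proposition3.4}, the identification $I=N_R(m_kR)A$, and the verification that $m_kR$ is right quasi-prime using the set $W=\{mr\mid r\in Q\}$ --- is, step for step, the paper's proof of Theorem \ref{Nilpotentprimes} itself (indeed you carry it out in the full multivariable notation $A$, $X_i$, $p_{\alpha_i,b_j}$, not just for $R[x;\sigma,\delta]$). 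So the content is sound and your handling of the lower-order correction terms and of the good-polynomial reduction matches the paper's; the only thing you miss is the economy of simply citing the theorem that the corollary is attached to.
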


\begin{corollary} Let $R$ be a ring.
\begin{enumerate}
    \item[\rm (1)] (\cite[Theorem 3.1]{OuyangLiu2012}) Let $R$ be a $\delta$-compatible reversible ring. Then
    \[ {\rm NAss}(R[x;\delta]) = \{P[x; \delta]\mid P\in {\rm NAss}(R)\}.
    \]
    \item[\rm (2)] (\cite[Corollary 3.2]{OuyangLiu2012}) Let $R$ be a reversible ring. Then
    \[{\rm NAss}(R[x]) = \{P[x]\mid P\in {\rm NAss}(R)\}.
    \]
\end{enumerate}
\end{corollary}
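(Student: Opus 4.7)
The plan is to derive both parts of the corollary as specializations of Theorem \ref{Nilpotentprimes} after verifying that the hypotheses on $R$ and the structural description of the extension $A$ match up correctly in the one-variable differential / ordinary polynomial case.

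For part (1), I would first observe that the differential polynomial ring $R[x;\delta]$ is an Ore extension of injective type with $\sigma = {\rm id}_R$, so by Remark \ref{comparisonendomorphism}(i) it is a skew PBW extension $A = \sigma(R)\langle x\rangle$ over $R$ with $\Sigma = \{{\rm id}_R\}$ and $\Delta = \{\delta\}$. I would then check the two hypotheses of Theorem \ref{Nilpotentprimes}: $\Sigma$-compatibility is automatic because $\sigma = {\rm id}_R$ forces $a\sigma^{\alpha}(b)=ab$ for every $\alpha$, while $\Delta$-compatibility is exactly the assumed $\delta$-compatibility of $R$. For the NI property, I would use the standard fact that any reversible ring is $2$-primal (since $ab=0$ implies $ba=0$ gives that $N(R)$ forms an ideal coinciding with $P(R)$), and $2$-primal rings are NI by definition $P(R)=N(R)\subseteq N^{*}(R) \subseteq N(R)$.

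Having verified the hypotheses, I would invoke Theorem \ref{Nilpotentprimes} to obtain
\[
{\rm NAss}(R[x;\delta]) = \{P A \mid P \in {\rm NAss}(R)\}.
\]
The only remaining step is to identify $PA$ with $P[x;\delta]$: since $A$ is a left free $R$-module on $\{x^i\}_{i\ge 0}$, the set $PA$ is precisely the set of polynomials $\sum_{i} p_i x^i$ with $p_i\in P$, which is the standard meaning of $P[x;\delta]$. This gives the stated equality.

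For part (2), I would simply take $\delta = 0$, in which case $R[x;\delta]=R[x]$. The $\delta$-compatibility hypothesis is vacuous because $a\delta(b)=0$ always, so a reversible ring is automatically $\delta$-compatible for the zero derivation. Applying part (1) yields ${\rm NAss}(R[x]) = \{P[x]\mid P\in {\rm NAss}(R)\}$. The main substantive point, and the only mildly delicate step, is citing/justifying that reversible rings are NI; everything else is a direct reading of Theorem \ref{Nilpotentprimes} in the one-variable Ore case and a bookkeeping identification of $PA$ with $P[x;\delta]$.
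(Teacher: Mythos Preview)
Your proposal is correct and matches the paper's intent: the corollary is stated without proof precisely because it is the direct specialization of Theorem \ref{Nilpotentprimes} to the one-variable case $A=R[x;\delta]$ (respectively $A=R[x]$), once one notes that reversible $\Rightarrow$ 2-primal $\Rightarrow$ NI and that $\sigma={\rm id}_R$ makes $\Sigma$-compatibility automatic. The identification $PA=P[x;\delta]$ is exactly the routine bookkeeping you describe.
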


\begin{corollary} Let $R$ be a ring and consider $M_R=R_R$. 
\begin{enumerate}
    \item [\rm (1)](\cite[Theorem 3.1.2]{NinoRamirezReyes2020}) Let $A = \sigma(R)\langle x_1,\dotsc, x_n\rangle$ be a skew PBW extension over $R$ and $M_R$ a right $R$-module. If $M_R$ is $(\Sigma, \Delta)$-compatible, then
\[
{\rm Ass}(M\langle X\rangle_A) = \{P\left \langle X \right \rangle\mid P\in {\rm 
Ass}(R)\}.
\]
    \item [\rm (2)] ({\cite[Theorem 2.1]{Annin2004}})  Let $S=R[x;\sigma,\delta]$ and $M_R$ a right $R$-module. If $M_R$ is $(\sigma,\delta)$-compatible, then
    \[
{\rm Ass}(M[X]_S) = \{P[X]\mid P \in {\rm 
Ass}(M_R)\}.
\]
    \item [\rm (3)] Let $S=R[x;\delta]$ be a differential polynomial ring of $R$ and $M_R$ a right $R$-module. If $M_R$ is $\delta$-compatible, then
    \[
{\rm Ass}(M[X]_S) = \{P[X]\mid {P} \in {\rm 
Ass}(M_R)\}.
\]
    
\end{enumerate}
\end{corollary}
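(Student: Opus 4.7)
The plan is to observe that the three assertions form a single hierarchy: part (1) is the most general, and parts (2)--(3) are special cases obtained by collapsing the family of indeterminates to a single one and further degenerating the endomorphism. Since part (1) is already established in \cite[Theorem 3.1.2]{NinoRamirezReyes2020}, the bulk of the argument reduces to exhibiting the correct identifications between Ore-type rings and one-variable skew PBW extensions.

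For part (2), every Ore extension $S=R[x;\sigma,\delta]$ of injective type is isomorphic to the skew PBW extension $A=\sigma(R)\langle x\rangle$ in a single indeterminate (see Remark \ref{comparisonendomorphism}(i)), with $\Sigma=\{\sigma\}$ and $\Delta=\{\delta\}$. Under this identification, the $(\sigma,\delta)$-compatibility of the module $M_R$ is precisely the $(\Sigma,\Delta)$-compatibility condition, and the extended module $M[X]_S$ coincides as a right $A$-module with $M\langle X\rangle_A$. Invoking part (1) then yields the desired description of ${\rm Ass}(M[X]_S)$. For part (3), write $R[x;\delta]=R[x;{\rm id}_R,\delta]$, so the $\delta$-compatibility of $M_R$ coincides with $({\rm id}_R,\delta)$-compatibility; applying part (2) with $\sigma={\rm id}_R$ delivers the conclusion.

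Concerning part (1), the proof blueprint parallels that of Theorem \ref{Nilpotentprimes}, but with classical annihilators $r_R(\cdot)$ in place of weak annihilators $N_R(\cdot)$, with the module $M_R$ in place of the underlying ring, and with prime submodules of $M\langle X\rangle_A$ in place of right quasi-prime ideals. The essential technical input is the module-theoretic analogue of Theorem \ref{proposition3.4}: for any non-zero element $m=m_0+m_1X_1+\cdots+m_\ell X_\ell$ in $M\langle X\rangle_A$, one produces $r\in R$ such that the right annihilators $r_R(m_ir)$ agree as $i$ varies (i.e., $mr$ is a ``good polynomial'' in the sense of Shock). Once this reduction is available, the inclusion $\{P\langle X\rangle\mid P\in{\rm Ass}(M_R)\}\subseteq {\rm Ass}(M\langle X\rangle_A)$ is obtained by extending a prime submodule $N\subseteq M$ with $r_R(N)=P$ to $N\langle X\rangle\subseteq M\langle X\rangle_A$, checking primality, and computing its annihilator via Proposition \ref{Reyes2015Proposition2.9}; the reverse inclusion follows by contracting $Q\in{\rm Ass}(M\langle X\rangle_A)$ to $Q\cap R$ and identifying it as an element of ${\rm Ass}(M_R)$.

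The main technical obstacle, were one to reprove (1) from scratch, would be handling the cross-terms involving the $\delta_i(m_j)$ that arise because $A$ is not quasi-commutative: the expansion $x^\alpha r$ given in Proposition \ref{Reyes2015Proposition2.9} produces iterated compositions of the $\sigma_i$ and $\delta_i$ acting on module elements, and one must verify that classical annihilator relations are preserved under all such compositions. This is precisely what the $(\Sigma,\Delta)$-compatibility of $M_R$ guarantees --- the module analogue of Proposition \ref{Proposition3.8} ensures that $m r=0$ implies $m\,\sigma^\theta(r)=m\,\delta^\beta(r)=0$ for all $\theta,\beta\in\mathbb{N}^n$ --- so that leading-coefficient arguments transfer cleanly between $M$ and $M\langle X\rangle_A$ without generating extraneous annihilator conditions.
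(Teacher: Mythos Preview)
The paper does not supply a proof of this corollary at all: it simply records parts (1)--(3) as known results, citing \cite[Theorem 3.1.2]{NinoRamirezReyes2020} and \cite[Theorem 2.1]{Annin2004} for (1) and (2), and leaving (3) implicit. Its placement after Theorem \ref{Nilpotentprimes} signals a structural parallel rather than a strict logical deduction, since that theorem treats the \emph{nilpotent} associated primes ${\rm NAss}$ while the corollary concerns the classical ${\rm Ass}$, and these coincide only under a reducedness hypothesis that the corollary does not impose.

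Your approach is correct and in fact more explicit than the paper's. You correctly take (1) as input from the cited source and obtain (2) and (3) by specializing to a single indeterminate via Remark \ref{comparisonendomorphism}(i), noting the needed injectivity of $\sigma$; this is precisely how the hierarchy collapses. Your additional sketch of a direct proof of (1), by rerunning the argument of Theorem \ref{Nilpotentprimes} with classical annihilators and the module-theoretic Shock reduction, accurately mirrors the strategy in \cite{NinoRamirezReyes2020}, so nothing is missing or mistaken. The only caveat is that this corollary is really a compendium of cited results rather than a consequence of the paper's own theorems, and you have treated it that way.
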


\begin{example} 
Since Example \ref{ExampleMatrix} satisfies the conditions of Theorem \ref{Nilpotentprimes}, it follows that ${\rm NAss}(R[x;\overline{\sigma},\overline{\delta}])={\rm NAss}(R)[x;\overline{\sigma},\overline{\delta}]$. On the other hand, the right ideals of $R$ are given by 
\begin{center}
    $I_1=\begin{pmatrix} 0 & 0 \\ 0 & 0 \end{pmatrix}R,\ \ \ \ \ \ \ \ \ I_2=\begin{pmatrix} 0 & 0 \\ 0 & q(t) \end{pmatrix}R, \ \ \ \ \ \ \ \ \ I_3=\begin{pmatrix} 0 & p(t) \\ 0 & 0 \end{pmatrix}R$,
\end{center}

\begin{center}
     $I_4=\begin{pmatrix} 0 & p(t) \\ 0 & q(t) \end{pmatrix}R,\ \ \ \ \ \   I_5=\begin{pmatrix} q(t) & p(t) \\ 0 & 0 \end{pmatrix}R,\ \ \ \ \ \   I_6=\begin{pmatrix} q(t) & p(t) \\ 0 & s(t) \end{pmatrix}R$,
\end{center}
for $p(t),q(t) \in \Bbbk[t]$. Additionally, we can observe that the only quasi-prime ideals of $R$ are $I_2,I_4,I_5$, and $I_6$. Furthermore,  $N_R(I_2)=N_R(I_4)=N_R(I_5)=N_R(I_6)=I_3$, where $I_3 = N(R)$. Thus, it follows that ${\rm NAss}(R)=\left \{ N(R)\right \}=\left \{I_3\right \}$. Therefore, we conclude that ${\rm NAss}(R[x;\overline{\sigma},\overline{\delta}]) = N(R)[x;\overline{\sigma},\overline{\delta}]=I_3[x;\overline{\sigma},\overline{\delta}]$.
\end{example}

\section{Examples}\label{examplespaper}
As is clear, the relevance of the results presented in the paper is appreciated when we apply them to algebraic structures more general than those considered by Ouyang et al. \cite{OuyangBirkenmeier2012, OuyangLiu2012}, i.e., noncommutative rings which cannot be expressed as Ore extensions. With the aim of presenting different examples of this kind of rings, in this section we consider several and remarkable families of rings that have been studied in the literature which are subfamilies of skew PBW extensions. Of course, the list of examples is not exhaustive.

\begin{definition}[{\cite{BellSmith1990}; \cite[Definition C4.3]{Rosenberg1995}}]\label{monito}
A $3$-{\em dimensional algebra} $\mathcal{A}$ is a $\Bbbk$-algebra generated by the indeterminates $x, y, z$ subject to the relations $yz-\alpha zy=\lambda$, $zx-\beta xz=\mu$, and $xy-\gamma yx=\nu$, where $\lambda,\mu,\nu \in \Bbbk x+\Bbbk y+\Bbbk z+\Bbbk$, and $\alpha, \beta, \gamma \in \Bbbk^{*}$. $\mathcal{A}$ is called a \textit{3-dimensional skew polynomial $\Bbbk$-algebra} if the set $\{x^iy^jz^k\mid i,j,k\geq 0\}$ forms a $\Bbbk$-basis of the algebra.
\end{definition}

Different authors have studied ring-theoretical and geometrical properties of 3-dimensional skew polynomial algebras (e.g., \cite{Jordan1993, JordanWells1996, RedmanPhD1996, Redman1999, SarmientoReyes2022}. Next, we recall the classification of these algebras.

\begin{proposition}[{\cite{Rosenberg1995}, Theorem C4.3.1]}]\label{quasipolynomial}
 Up to isomorphism, a 3-dimensional skew polynomial $\Bbbk$-algebra $\mathcal{A}$ is given by the following relations:
\begin{enumerate}
    \item [\rm (1)] If $|\{\alpha,\beta,\gamma\}|=3$, then $\mathcal{A}$ is defined by the relations $yz-\alpha zy=0$, $zx-\beta xz=0$, and $xy-\gamma yx=0$.
    \item [\rm (2)] If $|\{\alpha,\beta,\gamma\}|=2$ and $\beta\not=\alpha=\gamma=1$, then $\mathcal{A}$ is defined by one of the following rules:
    \begin{enumerate}
        \item [\rm (a)] $yz-zy=z$, $zx-\beta xz=y$, and $xy-yx=x$.
        \item [\rm (b)] $yz-zy=z$, $zx-\beta xz=b$, and $xy-yx=x$.
        \item [\rm (c)] $yz-zy=0$, $zx-\beta xz=y$, and $xy-yx=0$.
        \item [\rm (d)] $yz-zy=0$, $zx-\beta xz=b$, and $xy-yx=0$.
        \item [\rm (e)] $yz-zy=az$, $zx-\beta xz=0$, and $xy-yx=x$.
        \item [\rm (f)] $yz-zy=z$, $zx-\beta xz=0$, and $xy-yx=0$.
    \end{enumerate}
    Here $a,b\in \Bbbk$ are arbitrary; all non-zero values of $b$ yield isomorphic algebras.
    \item [\rm (3)] If $\alpha=\beta=\gamma\not=1$, and if $\beta\not=\alpha=\gamma\not= 1$, then $\mathcal{A}$ is one of the following algebras:
    \begin{enumerate}
        \item [\rm (a)] $yz-\alpha zy=0$, $zx-\beta xz=y+b$, and $xy-\alpha yx=0$.
        \item [\rm (b)] $yz-\alpha zy=0$, $zx-\beta xz=b$, and $xy-\alpha yx=0$.
    \end{enumerate}
    Here $a,b\in \Bbbk$ is arbitrary; all non-zero values of $b$ yield isomorphic algebras.
    \item [\rm (4)] If $\alpha=\beta=\gamma\not=1$, then $\mathcal{A}$ is determined by the relations $yz-\alpha zy= a_1 x + b_1$, $zx-\alpha xz= a_2 y + b_2$, and $xy-\alpha yx= a_3 z + b_3$.

    If $a_i=0$, then all non-zero values of $b_i$ yield isomorphic algebras.
    \item [\rm (5)] If $\alpha=\beta=\gamma=1$, then $\mathcal{A}$ is isomorphic to one of the following algebras:
    \begin{enumerate}
        \item [\rm (a)] $yz-zy=x$, $zx-xz=y$, and $xy-yx=z$.
        \item [\rm (b)] $yz-zy=0$, $zx-xz=0$, and $xy-yx=z$.
        \item [\rm (c)] $yz-zy=0$, $zx-xz=0$, and $xy-yx=b$.
        \item [\rm (d)] $yz-zy=-y$, $zx-xz=x+y$, and $xy-yx=0$.
        \item [\rm (e)] $yz-zy=az$, $zx-xz=x$, and $xy-yx=0$.
    \end{enumerate}
    Here $a,b\in \Bbbk$ are arbitrary; all non-zero values of $b$ yield isomorphic algebras.
\end{enumerate}
\end{proposition}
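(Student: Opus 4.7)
The plan is to derive the classification by exploiting the hypothesis that $\{x^iy^jz^k\mid i,j,k\geq 0\}$ is a $\Bbbk$-basis, which forces a diamond (overlap) condition on the three defining relations
\[
yz=\alpha zy+\lambda,\qquad zx=\beta xz+\mu,\qquad xy=\gamma yx+\nu,
\]
where $\lambda=\lambda_0+\lambda_1x+\lambda_2y+\lambda_3z$ and analogously for $\mu$ and $\nu$. The central calculation is to reduce the triple product $xyz$ (and its permutations $yzx$, $zxy$) in two different orders using these rewriting rules and to demand that the two PBW normal forms agree. This is the Diamond Lemma condition, and it produces a polynomial system linking the coefficients $\lambda_i,\mu_i,\nu_i$ with products of $\alpha,\beta,\gamma$.

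First I would expand $(xy)z$ and $x(yz)$ in the free algebra on $x,y,z$, reduce both expressions to the PBW basis by systematically applying the three relations, and compare. The coefficient of each basis monomial $x^ay^bz^c$ in the difference must vanish, yielding a finite linear system whose entries are polynomial in $\alpha,\beta,\gamma$. Typical rows take the form $(1-\alpha\beta\gamma)\lambda_i=0$ or $(\alpha-\beta)\mu_j=0$, so the solvability of the system splits naturally according to the multiplicities among $\alpha,\beta,\gamma$ and according to whether $\alpha\beta\gamma=1$. I would then repeat the procedure for the remaining overlaps $yzx$ and $zxy$ to obtain the full list of constraints on the affine parts.

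Next I would perform the case analysis dictated by $|\{\alpha,\beta,\gamma\}|$ and by coincidences with $1$. In the fully generic case where $\alpha,\beta,\gamma$ are pairwise distinct, all constraint equations force $\lambda=\mu=\nu=0$, giving item $(1)$. In the partially degenerate cases $(2)$ and $(3)$ some affine parts survive, and here I would apply a linear change of variables $x\mapsto x+c_1,\ y\mapsto y+c_2,\ z\mapsto z+c_3$ together with diagonal rescalings to absorb as many parameters as possible and reach the listed normal forms. For $\alpha=\beta=\gamma\ne 1$ (item $(4)$) a symmetric family of affine coefficients $a_i,b_i$ survives, and an analogous affine change of variables reduces each non-zero $b_i$ to $1$. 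Finally, for $\alpha=\beta=\gamma=1$ (item $(5)$) the defining relations encode the bracket of a three-dimensional Lie algebra together with an affine translation, so the classification specializes to the well-known classification of three-dimensional Lie algebras over $\Bbbk$, from which the five canonical forms emerge.

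The main obstacle I expect is the sheer bookkeeping required by the associativity computation: each of the three overlaps produces constraints on four affine parameters per relation, and careful case analysis is needed where coincidences among $\alpha,\beta,\gamma$ enlarge the dimension of the solution space and allow more normal forms. A secondary obstacle is verifying that the listed families are pairwise non-isomorphic, which one handles by computing invariants such as the center, the abelianization, and the associated graded algebra of $\mathcal{A}$. Since the statement is classical and is quoted verbatim from \cite[Theorem C4.3.1]{Rosenberg1995}, the final bookkeeping would refer to the tables in that reference; the proposal above only outlines the structural reduction that converts the PBW hypothesis plus associativity into the explicit normal forms $(1)$--$(5)$.
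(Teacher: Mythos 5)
The paper does not prove this proposition: it is imported verbatim, with a citation, from \cite{Rosenberg1995} (Theorem C4.3.1), going back to Bell and Smith \cite{BellSmith1990}, and is used in Section \ref{examplespaper} only as a source of examples. There is therefore no internal proof to compare yours against. What can be said is that your outline follows the standard strategy underlying the cited classification: translate the PBW hypothesis into overlap (Diamond Lemma) conditions, solve the resulting linear constraints on the affine parts $\lambda,\mu,\nu$ according to the coincidences among $\alpha,\beta,\gamma$ and whether $\alpha\beta\gamma=1$, and then normalize by affine changes of variables and rescalings.

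Two caveats. First, a technical slip: in the free algebra $(xy)z$ and $x(yz)$ are literally the same word, so ``comparing'' them yields no condition. With normal forms $x^iy^jz^k$ the reduction rules rewrite the out-of-order pairs $yx$, $zx$, $zy$, and the unique overlap ambiguity is the word $zyx$, which must be reduced both as $(zy)x$ and as $z(yx)$; that single resolution is what produces constraints of the shape $(1-\alpha\beta\gamma)\lambda_i=0$, not a triple of independent ``permutation'' checks. Second, your proposal is an outline rather than a proof: the case analysis, the normalization to the listed forms (including which non-zero parameters can be scaled to $1$), and the verification that the resulting families are pairwise non-isomorphic constitute essentially the entire content of the theorem, and you defer all of them to the reference. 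As a justification for quoting the result this is adequate --- and it is all the paper itself does --- but it should not be mistaken for an independent proof.
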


It follows from Definition \ref{monito} that every $3$-dimensional skew polynomial algebra is a skew PBW extension over the field $\Bbbk$, that is, $\mathcal{A} \cong \sigma(\Bbbk)\left \langle x,y,z \right \rangle$. For these algebras, Theorem \ref{proposition3.4} guarantees the existence of nilpotent good polynomials and Theorem \ref{Nilpotentprimes} characterizes the nilpotent associated prime ideals. In particular, since $\Bbbk$ is a field (hence reduced), the only nilpotent associated prime ideal is the zero ideal.

     \begin{example}
     Following Havli\v{c}ek et al. \cite[p. 79]{HavlicekKlimykPosta2000}, the algebra $U_q'(\mathfrak{so}_3)$ is generated over a field $\Bbbk$ by the indeterminates $I_1, I_2$, and $I_3$, subject to the relations given by
    \begin{equation*}
        I_2I_1 - qI_1I_2 = -q^{\frac{1}{2}}I_3,\quad 
        I_3I_1 - q^{-1}I_1I_3 = q^{-\frac{1}{2}}I_2,\quad 
        I_3I_2 - qI_2I_3 = -q^{\frac{1}{2}}I_1,
    \end{equation*}
    where $q\in \Bbbk\ \backslash\ \{0, \pm 1\}$. It is straightforward to show that $U_q'(\mathfrak{so}_3)$ cannot be expressed as an iterated Ore extension. However, this algebra is a skew PBW extension over $\Bbbk$, i.e., $U_q'(\mathfrak{so}_3) \cong \sigma(\Bbbk) \langle I_1, I_2, I_3\rangle$ \cite[Example 1.3.3]{LFGRSV}. On the other hand, notice that $\mathbb{R}$ is a $(\Sigma,\Delta)$-compatible NI ring with the identity endomorphism of $\mathbb{R}$ and the trivial $\sigma$-derivation. Thus, since that $r_{\Bbbk}(X)=N_{\Bbbk}(X)=\left \{ 0 \right \}$, for all $X\nsubseteq N(\Bbbk)$, Theorem \ref{theorem3.4} guarantees that $N_{U_q'(\mathfrak{so}_3)}(U)$ is an ideal generated by a nilpotent element of $U_q'(\mathfrak{so}_3)$, for each subset $U \nsubseteq N(U_q'(\mathfrak{so}_3))$. Following a similar argument, by Theorem \ref{Theorem3.1.6} $N_{U_q'(\mathfrak{so}_3)}(f\ U_q'(\mathfrak{so}_3) )$ is an ideal generated by a nilpotent element, for each principal right ideal $f\ U_q'(\mathfrak{so}_3) \nsubseteq N(U_q'(\mathfrak{so}_3))$. Now, Theorem \ref{theorem3.9} guarantees that $N_{U_q'(\mathfrak{so}_3)}(f)$ is an ideal generated by a nilpotent element, for each $f \notin N(U_q'(\mathfrak{so}_3))$. Thinking about nilpotent good polynomials and nilpotent associated primes ideals, Theorem \ref{proposition3.4} allows us to conclude that the algebra $U_q'(\mathfrak{so}_3)$ has nilpotent good polynomials, while Theorem \ref{Nilpotentprimes} describes the nilpotent associated primes ideals for these algebras.

    \end{example}
    
    \begin{example}
    Zhedanov \cite[Section 1]{Zhedanov} introduced the {\em Askey-Wilson algebra} $AW(3)$ as the algebra generated by three operators $K_0, K_1$, and $K_2$, that satisfy the commutation relations
    \begin{align}
        [K_0, K_1]_{\omega} &\ = K_2, \label{Zhedanov1991relation(1.1a)} \\
        [K_2, K_0]_{\omega} &\ = BK_0 + C_1K_1 + D_1, \label{Zhedanov1991relation(1.1b)}\\ 
        [K_1, K_2]_{\omega} &\ = BK_1 + C_0K_0 + D_0, \label{Zhedanov1991relation(1.1c)}
    \end{align}
    where $B, C_0, C_1, D_0, D_1$, are the structure constants of the algebra, which Zhedanov assumes are real, and the $q$-commutator $[ - , -]_{\omega}$ is given by $[\square, \triangle]_{\omega}:= e^{\omega}\square \triangle - e^{- \omega}\triangle \square$, where $\omega$ is an arbitrary real parameter. 
    
    Notice that in the limit $\omega \to 0$, the algebra AW(3) becomes an ordinary Lie algebra with three generators ($D_0$ and $D_1$ are included among the structure constants of the algebra in order to take into account algebras of Heisenberg-Weyl type).
    
    The relations (\ref{Zhedanov1991relation(1.1a)}) - (\ref{Zhedanov1991relation(1.1c)}) can be written as
    \begin{align}
        e^{\omega}K_0K_1 - e^{-\omega}K_1K_0 &\ = K_2\\
        e^{\omega} K_2K_0 - e^{-\omega}K_0 K_2 &\ = BK_0 + C_1K_1 + D_1\\
        e^{\omega}K_1K_2 - e^{-\omega}K_2K_1 &\ = BK_1 + C_0K_0 + D_0.
    \end{align}
    According to the relations that define the algebra, it is clear that AW(3) cannot be expressed as an iterated Ore extension. Nevertheless, using techniques such as those presented in \cite[Theorem 1.3.1]{LFGRSV}, it can be shown that ${\rm AW(3)}$ is a skew PBW extension of endomorphism type, that is,  ${\rm AW(3)} \cong \sigma(\mathbb{R})\langle K_0, K_1, K_2\rangle$. Since that $\mathbb{R}$ is reduced, $r_{\mathbb{R}}(X)=N_{\mathbb{R}}(X)=\left \{ 0 \right \}$, for all $X\nsubseteq N(\mathbb{R})$. Hence, the characterization the weak annihilators of $N_{{\rm AW(3)}}(U)$, $N_{{\rm AW(3)}}(f\ {\rm AW(3)})$, and $N_{{\rm AW(3)}}(f)$,  for certain subsets, principal right ideals, and elements of algebra ${\rm AW(3)}$, follow from Theorems \ref{theorem3.4}, \ref{Theorem3.1.6}, and \ref{theorem3.9}, respectively. Furthermore, Theorems \ref{theorem3.4.1}, \ref{Theorem3.13} and \ref{Theorem3.18} are also valid for the description of the weak annihilators mentioned above. For AW(3), Theorem \ref{proposition3.4} guarantees the existence of nilpotent good polynomials, and Theorem \ref{Nilpotentprimes} describes the nilpotent associated prime ideals. In particular, since that $\mathbb{R}$ is a field (hence reduced), the only nilpotent associated prime ideal is the zero ideal. All the theorems mentioned above are valid for ${\rm AW}(3)$ if we change $\mathbb{R}$ to any reduced ring $R$. 
    
    \end{example}
    
    \begin{example}
    As is well-known, algebras whose generators satisfy quadratic relations such as Clifford algebras, Weyl-Heisenberg algebras, and Sklyanin algebras, play an important role in analysis and mathematical physics. Motivated by these facts, Golovashkin and Maximov \cite{GolovashkinMaximov2005} considered the algebras $Q(a, b, c)$, with two generators $x$ and $y$, generated by the quadratic relations 
    \begin{equation}\label{GolovashkinMaximov2005(1)}
        yx = ax^2 + bxy + cy^2,
    \end{equation}
    where the coefficients $a, b$, and $c$ belong to an arbitrary field $\Bbbk$ of characteristic zero. They studied conditions on these elements under which such an algebra has a PBW basis of the form $\{x^my^n\mid m, n \in \mathbb{N}\}$.  
    
    In \cite[Section 3]{GolovashkinMaximov2005}, Golovashkin and Maximov presented a necessary and sufficient condition for a PBW basis as above exists in the case $ac + b \neq 0$, while in \cite[Section 5]{GolovashkinMaximov2005}, for the case $ac + b = 0$, they proved that if $b \neq 0, -1$, then $Q(a, b, c)$ has a PBW basis, and if $b = -1$, then the elements $\{x^my^n\mid m, n \in \mathbb{N}\}$ are linearly independent but do not form a PBW basis of $Q(a, b, c)$.
    
    From relation (\ref{GolovashkinMaximov2005(1)}) one can see that if $a, b,$ and $c$ are not zero simultaneously, then $Q(a, b, c)$ is neither an Ore extension of $\Bbbk$, $\Bbbk[x]$ nor of $\Bbbk[y]$. On the other hand, if $b\neq 0$ and $c=0$, then one can check that $Q(a, b, c)$ is a skew PBW extension over $\Bbbk[x]$, i.e., $Q(a, b, c)\cong \sigma(\Bbbk[x])\langle y\rangle$. Note that $\Bbbk[x]$ is a $(\Sigma,\Delta)$-compatible reduced ring with the identity endomorphism of $\Bbbk[x]$ and the trivial $\sigma$-derivation. Since that $\Bbbk[x]$ is a reduced ring, we get $r_{\Bbbk[x]}(X)=N_{\Bbbk[x]}(X)=\left \{ 0 \right \}$, for all $X\nsubseteq N(\Bbbk[x])$. Hence, the characterization the weak annihilators of $N_{Q(a, b, c)}(U)$, $N_{Q(a, b, c)}(f\ Q(a, b, c))$, and $N_{Q(a, b, c)}(f)$,  for some subsets, principal right ideals, and elements of algebra $Q(a, b, c)$, follow from Theorems \ref{theorem3.4}, \ref{Theorem3.1.6}, and \ref{theorem3.9}, respectively. Also, Theorems \ref{theorem3.4.1}, \ref{Theorem3.13} and \ref{Theorem3.18} are also valid for the description of these weak annihilators. Lastly, for the algebra $Q(a,b,c)$, the existence of nilpotent good polynomials and characterization of the nilpotent associated prime ideals follow from Theorem \ref{proposition3.4} and \ref{Nilpotentprimes}, respectively. As an observation, if $a=0$, then one can check that $Q(a, b, c)$ is a skew PBW extension on $\Bbbk[y]$, i.e., $Q(a, b, c)\cong \sigma(\Bbbk[y])\langle x\rangle$ and all the previous results are satisfied too. Similarly, if we change $\Bbbk$ to a reduced ring $R$, all our observations and results are still valid for $Q(a,b,c)$. 
    \end{example}
    
    \begin{example}
    \cite[Section 25.2]{BurdikNavratil2009} The basis of algebra $\mathfrak{g} = \mathfrak{so}(5, \mathbb{C})$ consists of the elements $\bf{J}_{\alpha \beta} = -{\bf J}_{\beta \alpha}$, $\alpha, \beta = 1, 2, 3, 4, 5$ satisfying the commutation relations
    \begin{equation}\label{2009LieAlgebraso(5)(25.2)}
        [\bf{J}_{\alpha \beta}, \bf{J}_{\mu \nu}] = \delta_{\beta \mu} \bf{J}_{\alpha \nu} + \delta_{\alpha \nu}\bf{J}_{\beta \mu} - \delta_{\beta \nu}\bf{J}_{\alpha \mu} - \delta_{\alpha \mu}\bf{J}_{\beta \nu}.
    \end{equation}
    If we consider the elements
    \begin{align*}
        {\bf H}_1 & = \bf{iJ}_{12}, & \bf{H}_2 & = \bf{iJ}_{34},\\
        \bf{E}_1 & = \frac{1}{\sqrt{2}} (\bf{J}_{45} + i\bf{J}_{35}), & \bf{E}_2 & = \frac{1}{2} (\bf{J}_{23} + i\bf{J}_{13} - \bf{J}_{14} + i \bf{J}_{24}),\\
        \bf{E}_3 & = \frac{1}{\sqrt{2}} (\bf{J}_{15} - i\bf{J}_{25}), & \bf{E}_4 & = \frac{1}{2} (\bf{J}_{23} + i\bf{J}_{13} + \bf{J}_{14} - i\bf{J}_{24}), \\ 
        \bf{F}_1 & = \frac{1}{\sqrt{2}} (-\bf{J}_{45} + i\bf{J}_{35}), & \bf{F}_2 & = \frac{1}{2} (-\bf{J}_{23} + i \bf{J}_{13} + \bf{J}_{14} + i\bf{J}_{24}),\\
        \bf{F}_3 & = \frac{1}{\sqrt{2}} (-\bf{J}_{15} -i\bf{J}_{25}), & \bf{F}_4 & = \frac{1}{2}(-\bf{J}_{23} + i\bf{J}_{13} - \bf{J}_{14} - i\bf{J}_{24}),
    \end{align*}
    the commutation relations (\ref{2009LieAlgebraso(5)(25.2)}) can be expressed as
    \begin{align*}
        [\bf{H}_1, \bf{E}_1] & = 0, & [\bf{H}_1, \bf{E}_2] & = \bf{E}_2, & [\bf{H}_1, \bf{E}_3] & = \bf{E}_3, & [\bf{H}_1, \bf{E}_4] & = \bf{E}_4,\\ 
        [\bf{H}_2, \bf{E}_1] & = \bf{E}_1, & [\bf{H}_2, \bf{E}_2] & = -\bf{E}_2, & [\bf{H}_2, \bf{E}_3] & = 0, & [\bf{H}_2, \bf{E}_4] & = \bf{E}_4,\\
        [\bf{H}_1, \bf{F}_1] & = 0, & [\bf{H}_1, \bf{F}_2] & = -\bf{F}_2, & [\bf{H}_1, \bf{F}_3] & = -\bf{F}_3, & [\bf{H}_1, \bf{F}_4] & = -\bf{F}_4,\\
        [\bf{H}_2, \bf{F}_1] & = -\bf{F}_1, & [\bf{H}_2, \bf{F}_2] & = \bf{F}_2, & [\bf{H}_2, \bf{F}_3] & = 0, & [\bf{H}_2, \bf{F}_4] & = - \bf{F}_4,\\
        [\bf{E}_1, \bf{E}_2] & = \bf{E}_3, & [\bf{E}_1, \bf{E}_3] & = \bf{E}_4, & [\bf{E}_1, \bf{E}_4] & = 0, \\
        [\bf{E}_2, \bf{E}_3] & = 0, & [\bf{E}_2, \bf{E}_4] & = 0, & [\bf{E}_3, \bf{E}_4] & = 0, \\
        [\bf{F}_1, \bf{F}_2] & = -\bf{F}_3 & [\bf{F}_1, \bf{F}_3] & = -\bf{F}_4, & [\bf{F}_1, \bf{F}_4] & = 0, \\
        [\bf{F}_2, \bf{F}_3] & = 0, & [\bf{F}_2, \bf{F}_4] & = 0, & [\bf{F}_3, \bf{F}_4] & = 0, \\
        [\bf{E}_1, \bf{F}_1] & = \bf{H}_2, & [\bf{E}_1, \bf{F}_2] & = 0, & [\bf{E}_1, \bf{F}_3] & = -\bf{F}_2, & [\bf{E}_1, \bf{F}_4] & = - \bf{F}_3, \\
        [\bf{E}_2, \bf{F}_1] & = 0, & [\bf{E}_2, \bf{F}_2] & = \bf{H}_1 - \bf{H}_2, & [\bf{E}_2, \bf{F}_3] & = \bf{F}_1, & [\bf{E}_2, \bf{F}_4] & = 0, \\
        [\bf{E}_3, \bf{F}_1] & = -\bf{E}_2 & [\bf{E}_3, \bf{F}_2] & = \bf{E}_1, & [\bf{E}_3, \bf{F}_3] & = \bf{H}_1, & [\bf{E}_3, \bf{F}_4] & = \bf{F}_1, \\
        [\bf{E}_4, \bf{F}_1] & = - \bf{E}_3 & [\bf{E}_4, \bf{F}_2] & = 0, & [\bf{E}_4, \bf{F}_3] & = \bf{E}_1, & [\bf{E}_4, \bf{F}_4] & = \bf{H}_1 + \bf{H}_2. 
    \end{align*}
    Having in mind the classical PBW theorem for the universal enveloping algebra $U(\mathfrak{so}(5, \mathbb{C}))$ of $\mathfrak{so}(5, \mathbb{C})$, and since $U(\mathfrak{so}(5, \mathbb{C}))$ is a PBW extension of $\mathbb{C}$ \cite[Section 5]{BellGoodearl1988}, then $U(\mathfrak{so}(5, \mathbb{C}))$ is a skew PBW extension over $\mathbb{C}$, i.e., $U(\mathfrak{so}(5, \mathbb{C})) \cong \sigma(\mathbb{C})\langle \bf{J}_{\alpha \beta}\mid 1 \le \alpha \leq \beta \le 5 \rangle$. The weak annihilator of certain subsets, principal right ideals, and elements of algebra $U(\mathfrak{so}(5, \mathbb{C}))$ are characterized by Theorems \ref{theorem3.4}, \ref{Theorem3.1.6}, and \ref{theorem3.9}, respectively. Furthermore, Theorems \ref{theorem3.4.1}, \ref{Theorem3.13} and \ref{Theorem3.18} hold since $U(\mathfrak{so}(5, \mathbb{C}))$ is a skew PBW extension of endomorphism type. Theorem \ref{proposition3.4} asserts the existence of nilpotent good polynomials and Theorem \ref{Nilpotentprimes} describes the nilpotent associated prime ideals of $U(\mathfrak{so}(5, \mathbb{C}))$.

    \end{example}
    
    \begin{example}
    With the purpose of introducing generalizations of the classical bosonic and fermionic algebras of quantum mechanics concerning several versions of the Bose-Einstein and Fermi-Dirac statistics, Green \cite{Green1953} and Greenberg and Messiah \cite{GreenbergMessiah1965} introduced by means of generators and relations the {\em parafermionic} and {\em parabosonic algebras}. For the completeness of the paper, briefly we recall the definition of each one of these structures following the treatment developed by Kanakoglou and Daskaloyannis \cite{KanakoglouDaskaloyannis2009}. Let $[\square, \triangle] := \square \triangle - \triangle \square$ and $\{\square, \triangle\} := \square \triangle + \triangle \square$.

   Consider the $\Bbbk$-vector space $V_F$ freely generated by the elements $f_i^{+}, f_j^{-}$, with $i, j = 1,\dotsc, n$. If $T(V_F)$ is the tensor algebra of $V_F$ and $I_F$ is the two-sided ideal $I_F$ generated by the elements $[[f_i^{\xi}, f_j^{\eta}], f_k^{\varepsilon}] - \frac{1}{2}(\varepsilon - \eta)^{2} \delta_{jk} f_i^{\xi} + \frac{1}{2}(\varepsilon - \xi)^{2}\delta_{ik}f_j^{\eta}$, for all values of $\xi, \eta, \varepsilon = \pm 1$, and $i, j, k = 1,\dotsc, n$, then the {\em parafermionic algebra} in $2n$ generators $P_F^{(n)}$ ($n$ parafermions) is the quotient algebra of $T(V_F)$ with the ideal $I_F$, that is, {\small{
   \[
   P_F^{(n)} = \frac{T(V_F)}{\langle [[f_i^{\xi}, f_j^{\eta}], f_k^{\varepsilon}] - \frac{1}{2}(\varepsilon - \eta)^{2} \delta_{jk} f_i^{\xi} + \frac{1}{2}(\varepsilon - \xi)^{2}\delta_{ik}f_j^{\eta} \mid \xi, \eta, \varepsilon = \pm 1, i, j, k = 1,\dotsc, n\rangle}
   \]
   }}
   It is well-known (e.g., \cite[Section 18.2]{KanakoglouDaskaloyannis2009}) that a parafermionic algebra $P_F^{(n)}$ in $2n$ generators is isomorphic to the universal enveloping algebra of the simple complex Lie algebra $\mathfrak{so}(2n+1)$ (according to the classification of the simple complex Lie algebras, e.g., Kac \cite{Kac1977, Kac1977a}), i.e., $P_F^{(n)} \cong U(\mathfrak{so}(2n+1))$. On the other hand, $P_F^{(n)}$ is a skew PBW extension over $\Bbbk$, that is,  $P_F^{(n)} \cong \sigma(\Bbbk)\langle f_i^{\xi}, f_j^{\eta}\rangle$, for values $\xi, \eta = \pm 1$, and $1 \le i, j \le n$. Since $\Bbbk$ is a field (hence reduced), then $r_{\Bbbk}(X)=N_{\Bbbk}(X)=\left \{ 0 \right \}$, for all $X\nsubseteq N(\Bbbk)$. This implies that the characterization the weak annihilators of $N_{P_F^{(n)}}(U)$, $N_{P_F^{(n)}}(f\ P_F^{(n)})$, and $N_{P_F^{(n)}}(f)$, for some subsets, principal right ideals, and elements of algebra $P_F^{(n)}$, follow from Theorems \ref{theorem3.4}, \ref{Theorem3.1.6}, and \ref{theorem3.9}, respectively. Furthermore, Theorems \ref{theorem3.4.1}, \ref{Theorem3.13} and \ref{Theorem3.18} are also valid for the description of these weak annihilators. We recall that these subsets, principal right ideals and elements, are not contained in the set of nilpotent elements of the algebra $P_F^{(n)}$. For the parafermionic algebra, Theorem \ref{proposition3.4} guarantees the existence of nilpotent good polynomials and Theorem \ref{Nilpotentprimes} describes the nilpotent associated prime ideals.
    
  \vspace{0.2cm}
    
    Similarly, if $V_B$ denotes the $\Bbbk$-vector space freely generated by the elements $b_i^{+}, b_j^{-}$, $i, j = 1,\dotsc, n$, $T(V_B)$ is the tensor algebra of $V_B$, and $I_B$ is the two-sided ideal of $T(V_B)$ generated by the elements $[\{b_i^{\xi}, b_j^{\eta}\}, b_k^{\varepsilon}] - (\varepsilon - \eta)\delta_{jk}b_i^{\xi} - (\varepsilon - \xi)\delta_{ik} b_j^{\eta}$, for all values of $\xi, \eta, \varepsilon = \pm 1$, and $i, j = 1, \dotsc, n$, then the {\em parabosonic algebra} $P_B^{(n)}$ in $2n$ generators ($n$ parabosons) is defined as the quotient algebra $P_B^{(n)} / I_B$, that is,
    \[
    P_B^{(n)} = \frac{T(V_B)}{\langle [\{b_i^{\xi}, b_j^{\eta}\}, b_k^{\varepsilon}] - (\varepsilon - \eta)\delta_{jk}b_i^{\xi} - (\varepsilon - \xi)\delta_{ik} b_j^{\eta}\mid \xi, \eta, \varepsilon = \pm 1, i, j = 1, \dotsc, n \rangle }
    \]
    It is known that the parabosonic algebra $P_B^{(n)}$ in $2n$ generators is isomorphic to the universal enveloping algebra of the classical simple complex Lie superalgebra $B(0,n)$, that is, $P_B^{(n)}\cong U(B(0,n))$. For more details about {\em parafermionic} and {\em parabosonic algebras}, see \cite[Proposition 18.2]{KanakoglouDaskaloyannis2009} and references therein.

    Similar to the case of parafermionic algebra, the parabosonic algebra is a skew PBW extension over $\Bbbk$, that is, $P_B^{(n)} \cong \sigma(\Bbbk)\langle b_i^{\xi}, b_j^{\eta}\rangle$ for values $\xi, \eta = \pm 1$, and $1 \le i, j \le n$. The results mentioned for the case of parafermionic algebras hold for parabasonic algebras. The weak annihilator of some subsets, principal right ideals, and elements of algebra $P_B^{(n)}$ are characterized by Theorems \ref{theorem3.4}, \ref{Theorem3.1.6}, and \ref{theorem3.9}, respectively. Furthermore, Theorems \ref{theorem3.4.1}, \ref{Theorem3.13} and \ref{Theorem3.18} are valid since $P_B^{(n)}$ is a skew PBW extension of endomorphism type. Finally, thinking about the theory of nilpotent associated prime ideals, Theorem \ref{proposition3.4} guarantees the existence of nilpotent good polynomials, and Theorem \ref{Nilpotentprimes} describes the nilpotent associated prime ideals of $P_B^{(n)}$.
   
    \end{example}

\begin{example}
Other algebraic structures that illustrate the results obtained in the paper concerns examples of {\em generalized Weyl algebras}, {\em down-up algebras}, and {\em ambiskew polynomial rings}. For the completeness of the paper, we will briefly present the definitions and some relations between these algebras (see \cite{Jordan1995, Jordan1995b, Jordan2000, JordanWells1996} for a detailed description).

Given an automorphism $\sigma$ and a central element $a$ of a ring $R$,  Bavula \cite{Bavula92} defined the {\em generalized Weyl algebra} $R(\sigma, a)$ as the ring extension of $R$ generated by the indeterminates $X^{-}$ and $X^{+}$ subject to the relations
\begin{equation*}
    X^{-}X^{+} = a,\qquad X^{+}X^{-} = \sigma(a),
\end{equation*}
and, for all $b\in R$,
\begin{equation*}
X^{+}b = \sigma(b)X^{+},\qquad X^{-}\sigma(b) = bX^{-}.
\end{equation*}
This family of algebras includes the classical Weyl algebras, primitive quotients of $U(\mathfrak{sl}_2)$, and ambiskew polynomial rings (see below the definition of these objects). Generalized Weyl algebras have been extensively studied in the literature by various authors (see \cite{Bavula92, Bavula1996, Bavula2021, Jordan2000}, and references therein).

\medskip

The {\em down-up algebras} $A(\alpha, \beta, \gamma)$, where $\alpha, \beta, \gamma \in \mathbb{C}$, were defined by Benkart and Roby \cite{Benkart1998, BenkartRoby1998} as generalizations of algebras generated by a pair of operators, precisely, the \textquotedblleft down\textquotedblright\ and \textquotedblleft up\textquotedblright\ operators, acting on the vector space $\mathbb{C}P$ for certain partially ordered sets $P$. Let us see the details.

Consider a partially ordered set $P$ and let $\mathbb{C} P$ be the complex vector space with basis $P$. If for an element $p$ of $P$, the sets $\{x\in P\mid x \succ p\}$ and $\{x\in P\mid x\prec p\}$ are finite, then we can define the \textquotedblleft down\textquotedblright\ operator $d$ and the \textquotedblleft down\textquotedblright\ operator $u$ in ${\rm End}_{\mathbb{C}} \mathbb{C}P$ as $u(p) = \sum_{x\succ p} x$ and $d(p) = \sum_{x \prec p} x$, respectively (for partially ordered sets in general, one needs to complete $\mathbb{C}P$ to define $d$ and $u$). For any $\alpha, \beta, \gamma \in \mathbb{C}$, the {\em down-up algebra} is the $\mathbb{C}$-algebra generated by $d$ and $u$ subject to the relations 
\begin{align}
    d^2 u = \alpha dud + \beta ud^2 + \gamma d, \label{(1a)}  \\
    du^2 = \alpha udu + \beta u^2 d + \gamma u. \label{(1b)}
\end{align}
A partially ordered set $P$ is called $(q, r)$-{\em differential} if there exist $q, r\in \mathbb{C}$ such that the down and up operators for $P$ satisfy the relations (\ref{(1a)}) and (\ref{(1b)}), and $\alpha = q(q+1), \beta = -q^3$, and $\gamma = r$. From \cite{BenkartRoby1998}, we know that for $0\neq \lambda \in \mathbb{C}$, $A(\alpha, \beta, \gamma) \simeq A(\alpha, \beta, \lambda \gamma)$. This means that when $\gamma \neq 0$, no problem if we assume $\gamma = 1$. For more details about the combinatorial origins of down-up algebras, see \cite[Section 1]{Benkart1998}.

Remarkable examples of down-up algebras include the enveloping algebra of the Lie algebra $\mathfrak{sl}_2(\mathbb{C})$ and some of its deformations introduced by Witten \cite{Witten1990} and  Woronowicz \cite{Woronowicz1987}. Related to the theoretical properties of these algebras, Kirkman et al. \cite{Kirkmanetal1999} proved that a down-up algebra $A(\alpha, \beta, \gamma)$ is Noetherian if and only if $\beta$ is non-zero. As a matter of fact, they showed that $A(\alpha, \beta, \gamma)$ is a generalized Weyl algebra in the sense of Bavula \cite{Bavula92}, and that $A(\alpha, \beta, \gamma)$ has a filtration for which the associated graded ring is an iterated Ore extension over $\mathbb{C}$.

Following \cite[p. 32]{Benkart1998}, if $\mathfrak{g}$ is a 3-dimensional Lie algebra over $\mathbb{C}$ with basis $x, y, [x, y]$ such that $[x, [x, y]] = \gamma x$ and $[[x, y], y] = \gamma y$, then in the universal enveloping algebra $U(\mathfrak{g})$ of $\mathfrak{g}$ these relations are given by
\begin{align*}
    x^2y - 2xyx + yx^{2} = &\ \gamma x,\\
    xy^2 - 2yxy + y^2x = &\ \gamma y.
\end{align*}
Notice that $U(\mathfrak{g})$ is a homomorphic algebra of the down-up algebra $A(2, -1, \gamma)$ via the mapping $\phi: A(2, -1, \gamma) \to U(\mathfrak{g})$, $d\mapsto x, u\mapsto y$, and the mapping $\psi: \mathfrak{g} \to A(2, -1,\gamma)$, $x\mapsto d, y\mapsto u$, $[x, y]\mapsto du-ud$, extends by the universal property of $U(\mathfrak{g})$ to an algebra homomorphism $\psi: U(\mathfrak{g}) \to A(2, -1, \gamma)$ which is the inverse of $\psi$, then $U(\mathfrak{g})$ is isomorphic to $A(2, -1, \gamma)$.

The well-known Lie algebra $\mathfrak{sl}_2$ of $2\times 2$ complex matrices of trace zero has a standard basis $e, f, h$, which satisfies $[e, f] = h,\ [h, e] = 2e$, and $[h, f] = -2f$. It is straightforward to see that $U(\mathfrak{sl}_2)\cong A(2, -1, -2)$. Also, for the Heisenberg Lie algebra $\mathfrak{h}$ with basis $x, y, z$ where $[x, y] = z$ and $[z, \mathfrak{h}] = 0$, $U(\mathfrak{h}) \cong A (2, -1, 0)$.

With the aim of providing an explanation of the existence of quantum groups, Witten \cite{Witten1990, Witten1991} introduced a 7-parameter deformation of the universal enveloping algebra $U(\mathfrak{sl}_2)$. By definition, Witten's deformation is a unital associative algebra over a field $\Bbbk$ (which is algebraically closed of characteristic zero) that depends on a $7$-tuple $\underline{\xi} = (\xi_1, \dotsc, \xi_7)$ of elements of $\Bbbk$. This algebra is generated by the indeterminates $x, y, z$ subject to the defining relations
\begin{align}
    xz - \xi_1 zx = &\ \xi_2 x, \label{Witten(2.1)}\\
    zy - \xi_3 yz = &\ \xi_4 y, \label{Witten(2.2)} \\
    yx - \xi_5xy = &\ \xi_6z^{2} + \xi_7z, \label{Witten(2.3)}
\end{align}
and is denoted by $W(\underline{\xi})$. By \cite[Section 2]{Benkart1998}, if $\xi_6 = 0$ and $\xi_7\neq 0$ then it is easy to see that expressions (\ref{Witten(2.1)}), (\ref{Witten(2.2)}), and (\ref{Witten(2.3)}) imply the equalities
\begin{align*}
    -\xi_5 x^2y + (1+\xi_1\xi_5)xyx - \xi_1yx^{2} = &\ \xi_2\xi_7x,\\
    -\xi_5xy^{2} + (1+\xi_3\xi_5)yxy - \xi_3y^2x = &\ \xi_4\xi_7y.
\end{align*}
When $\xi_5 \neq 0, \xi_1 = \xi_3$, and $\xi_2 = \xi_4$, we obtain 
\begin{align*}
    x^2y = &\ \frac{1+\xi_1\xi_5}{\xi_5}xyx - \frac{\xi_1}{\xi_5}yx^{2} - \frac{\xi_2\xi_7}{\xi_5}x, \\
    xy^{2} = &\ \frac{1 + \xi_1\xi_5}{\xi_5}yxy - \frac{\xi_1}{\xi_5}y^2x - \frac{\xi_2\xi_7}{\xi_5}y.
\end{align*}
As one can check, a Witten deformation algebra $W(\underline{\xi})$ with $\xi_6 = 0, \xi_5 \xi_7 \neq 0 , \xi_1 = \xi_3$, and $\xi_2 = \xi_4$ is a homomorphic image of the down-up algebra $A(\alpha, \beta, \gamma)$ where 
\begin{equation}\label{Benkart1998(2.5)}
\alpha = \frac{1 + \xi_1 \xi_5}{\xi_5},\quad \beta = -\frac{\xi_1}{\xi_5},\quad \gamma = -\frac{\xi_2\xi_7}{\xi_5}.
\end{equation}
All these facts allow to prove that a Witten deformation algebra $W(\xi)$ with 
\begin{equation}\label{Benkart1998(2.7)}
\xi_6 = 0,\quad \xi_5\xi_7 \neq 0,\quad \xi_1 = \xi_3,\quad {\rm and}\quad \xi_2 = \xi_4,     
\end{equation}
is isomorphic to the down-up algebra $A(\alpha, \beta, \gamma)$ with $\alpha, \beta, \gamma$ given by  (\ref{Benkart1998(2.5)}). Notice that any down-up algebra $A(\alpha, \beta, \gamma)$ with not both $\alpha$ and $\beta$ equal to $0$ is isomorphic to a Witten deformation algebra $W(\underline{\xi})$ whose parameters satisfy (\ref{Benkart1998(2.7)}).

Since algebras $W(\underline{\xi})$ are filtered, Le Bruyn \cite{LeBruyn1994, LeBruyn1995} studied the algebras $W(\underline{\xi})$ whose associated graded algebras are Auslander regular. He determined a 3-parameter family of deformation algebras which are said to be {\em conformal} $\mathfrak{sl}_2$ {\em algebras} that are generated by the indeterminates $x, y, z$ over a field $\Bbbk$ subject to the relations given by
\begin{equation}\label{Benkart1998(2.10)}
    zx - axz = x,\quad zy - ayz = y, \quad yx - cxy = bz^2 + z.
\end{equation}
In the case $c\neq 0$ and $b = 0$, the conformal $\mathfrak{sl}_2$ algebra with defining relations given by (\ref{Benkart1998(2.10)}) is isomorphic to the down-up algebra $A(\alpha, \beta, \gamma)$ with $\alpha = c^{-1}(1 + ac), \beta = -ac^{-1}$ and $\gamma = -c^{-1}$. Notice that if $c = b = 0$ and $a \neq 0$, then the conformal $\mathfrak{sl}_2$ algebra is isomorphic to the down-up algebra $A(\alpha, \beta, \gamma)$ with $\alpha = a^{-1}, \beta = 0$, and $\gamma = -a^{-1}$. As one can check using (\ref{Benkart1998(2.10)}), conformal $\mathfrak{sl}_2$ algebras are not Ore extensions but skew PBW extensions over $\Bbbk[z]$. 

Of interest for the examples in the paper, Kulkarni \cite{Kulkarni1999} showed that under certan assumptions on the parameters, a Witten deformation algebra is isomorphic to a conformal $\mathfrak{sl}_2$ algebra or to an iterated Ore extension (double Ore extension). More exactly, following \cite[Theorem 3.0.3]{Kulkarni1999} if $\xi_1\xi_3\xi_5\xi_2 \neq 0$ or $\xi_1\xi_3\xi_5\xi_4 \neq 0$, then $W(\underline{\xi})$ is isomorphic to one of the following algebras:
\begin{enumerate}
    \item [\rm (1)] A conformal $\mathfrak{sl}_2$ algebra with generators $x, y, z$ and relations given by (\ref{Benkart1998(2.10)}), for some elements $a, b, c\in \Bbbk$.
    \item [\rm (2)] An iterated Ore extension whose generators satisfy
    \begin{enumerate}
        \item [\rm (i)] $xz - zx = x$, $zy - yz = \zeta y$, $yx - \eta xy = 0$, or
        \item [\rm (ii)] $xw = \theta wx$, $wy = \kappa yw$, $yx = \lambda xy$, for parameters $\zeta, \eta, \theta, \kappa, \lambda \in \Bbbk$.
    \end{enumerate}
\end{enumerate}
Considering the above facts and theorems presented in this paper, we can assert that weak annihilators of subsets, principal right ideals, and elements of conformal $\mathfrak{sl}_2$ algebras are characterized by Theorems \ref{theorem3.4}, \ref{Theorem3.1.6}, and \ref{theorem3.9}, respectively. About the theory of nilpotent associated prime ideals, Theorem \ref{proposition3.4} establishes the existence of nilpotent good polynomials, and Theorem \ref{Nilpotentprimes} describes the nilpotent associated prime ideals of the algebra. 


\medskip

Jordan \cite{Jordan2000} introduced a certain class of iterated Ore extensions $R(B, \sigma, c, p)$ called {\em ambiskew polynomial rings} (these structures have been studied by Jordan at various levels of generality in several papers \cite{Jordan1993, Jordan1993b, Jordan1995, Jordan1995b}) that contains different examples of noncommutative algebras. We recall the treatment of ambiskew polynomial rings in the framework appropriate to down-up algebras with $\beta \neq 0$ following \cite{Jordan2000, JordanWells1996}.

For $\Bbbk$ a field and $\Bbbk^{*} = \Bbbk\ \backslash\ \{0\}$ its multiplicative group, consider a commutative $\Bbbk$-algebra $B$, a $\Bbbk$-automorphism of $B$, and elements $c\in B$ and $p\in \Bbbk^{*}$. Let $S$ be the Ore extension $B[x;\sigma^{-1}]$ and extend $\sigma$ to $S$ by setting $\sigma(x) = px$. By \cite[p. 41]{Cohn1985}, there is a $\sigma$-derivation $\delta$ of $S$ such that $\delta(B) = 0$ and $\delta(x) = c$. The {\em ambiskew polynomial ring} $R = R(B,\sigma, c, p)$ is the Ore extension $S[y;\sigma,\delta]$, whence the following relations hold:
\begin{equation}
    yx - pxy = c,\quad {\rm and,\ for\ all}\ b\in B,\quad xb = \sigma^{-1}(b)x\quad {\rm and}\quad yb = \sigma(b)y.
\end{equation}

Equivalently, $R$ can be presented as $R = B[y;\sigma][x;\sigma^{-1},\delta']$ with $\sigma(y) = p^{-1}y$, $\delta'(B) = 0$, and $\delta'(y) = -p^{-1}c$, so that $xy - p^{-1}yx = -p^{-1}c$. If we consider the relation $xb = \sigma^{-1}(b)x$ as $bx = x\sigma(b)$, then we can see that the definition involves twists from both sides using $\sigma$; this is the reason for the name of the objects. As a matter of fact, every generalized Weyl algebra is isomorphic to a factor of an ambiskew polynomial ring \cite{Jordan1993}, and the ambiskew polynomial ring $A(R, \sigma, c, p)$ is isomorphic to the generalized Weyl algebra $R[w](\sigma, w)$, where $w$ is extended to $R[w]$ by setting $\sigma(w) = pw+\sigma(c)$. In the conformal case, $A(R, \sigma, c, p) \simeq R[z](\sigma, z+\sigma(a))$, where $\sigma(z) = pz$. From the definition, it follows that ambiskew polynomial rings are skew PBW extensions over $B$, that is, $R(B, \sigma, c, p)\cong \sigma(B)\langle y, x\rangle$. 
\end{example}

\begin{example}
Recently, Bavula \cite{Bavula2021} defined the {\em skew bi-quadratic algebras} with the aim of giving an explicit description of bi-quadratic algebras on 3 generators with PBW basis. Let us recall the details of its definition.

For a ring $R$ and a natural number $n\ge 2$, a family $M = (m_{ij})_{i > j}$ of elements $m_{ij}\in R$ ($1\le j < i \le n$) is called a {\em lower triangular half-matrix} with coefficients in $R$. The set of all such matrices is denoted by $L_n(R)$.

If $Z(R)$ denotes the center of $R$, $\sigma = (\sigma_1, \dotsc, \sigma_n)$ is an $n$-tuple of commuting endomorphisms of $R$, $\delta = (\delta_1, \dotsc, \delta_n)$ is an $n$-tuple of $\sigma$-endomorphisms of $R$ (that is, $\delta_i$ is a $\sigma_i$-derivation of $R$ for $i=1,\dotsc, n$), $Q = (q_{ij})\in L_n(Z(R))$, $\mathbb{A}:= (a_{ij, k})$ where $a_{ij, k}\in R$, $1\le j < i \le n$ and $k = 1,\dotsc, n$, and $\mathbb{B}:= (b_{ij})\in L_n(R)$, the {\em skew bi-quadratic algebra}) ({\em SBQA}) $A = R[x_1,\dotsc, x_n;\sigma, \delta, Q, \mathbb{A}, \mathbb{B}]$ is a ring generated by the ring $R$ and elements $x_1, \dotsc, x_n$ subject to the defining relations
\begin{align}
    x_ir = &\ \sigma_i(r)x_i + \delta_i(r),\quad {\rm for}\ i = 1, \dotsc, n,\ {\rm and\ every}\ r\in R, \label{Bavula2021(1)} \\
    x_ix_j - q_{ij}x_jx_i = &\ \sum_{k=1}^{n} a_{ij, k}x_k + b_{ij},\quad {\rm for\ all}\ j < i.\label{Bavula2021(2)}
\end{align}
In the particular case, when $\sigma_i = {\rm id}_R$ and $\delta_i = 0$, for $i = 1,\dotsc, n$, the ring $A$ is called the {\em bi-quadratic algebra} ({\em BQA}) and is denoted by $A = R[x_1, \dotsc, x_n; Q, \mathbb{A}, \mathbb{B}]$. $A$ has {\em PBW basis} if $A = \bigoplus_{\alpha \in \mathbb{N}^{n}} Rx^{\alpha}$ where $x^{\alpha} = x_1^{\alpha_1}\dotsb x_n^{\alpha_n}$.
\end{example}
It is clear from the definition that $R[x_1, \dotsc, x_n;\sigma, \delta, Q, \mathbb{A}, \mathbb{B}] \cong \sigma(R)\langle x_1,\dotsc, x_n\rangle$, and, as a matter of fact, skew PBW extensions are more general than skew bi-quadratic algebras since the former do not assume the commutativity of $\sigma$'s (c.f. \cite[Definition 2.1]{LezamaAcostaReyes2015}) and the conditions about the membership of the elements $q_{ij}$ and $b_{ij}$. 

Notice that if $R$ is a $(\Sigma,\Delta)$-compatible NI ring, then the characterization of the weak annihilators $N_{A}(U)$, $N_{A}(fA)$, and $N_{A}(f)$, for subsets, principal right ideals, and elements of algebra $A= R[x_1, \dotsc, x_n; Q, \mathbb{A}, \mathbb{B}]$, respectively, follow from Theorems \ref{theorem3.4}, \ref{Theorem3.1.6}, and \ref{theorem3.9}. If, in addition, $R$ is a field, Theorems \ref{theorem3.4.1}, \ref{Theorem3.13} and \ref{Theorem3.18} are also valid for the description of all of them. Finally, for the bi-quadratic algebras, if $R$ is a $(\Sigma,\Delta)$-compatible NI ring, then Theorem \ref{proposition3.4} guarantees the existence of nilpotent good polynomials, and Theorem \ref{Nilpotentprimes} describes its nilpotent associated prime ideals.

\section{Future work}

In this paper, we have studied weak annihilators and nilpotent associated primes of skew PBW extensions over $(\Sigma,\Delta)$-compatible rings, where some results were shown in the more general setting of weak compatibility. Having in mind that Louzari, Ni\~no, Ram\'irez and Reyes \cite{LouzariReyes2020Springer, NinoRamirezReyes2020, Reyes2019} considered the notion of compatibility (and relater ring-theoretical properties) for modules over these extensions, a first and natural task is to characterize nilpotent associated prime ideals over these objects.

\medskip

Related with this, since Macdonald \cite{Macdonald1973} introduced a dual theory to primary decomposition called {\em secondary representation} whose key objects are known as {\em attached primes}, which were considered by Annin \cite{Annin2008, Annin2011} with the aim of studying the behavior of the attached prime ideals of inverse polynomial modules over skew polynomial rings of automorphism type $R[x;\sigma]$, a second task is to determine the attached primes of polynomial modules over skew PBW extensions with the aim of generalizing Annin's results and others related in the Module and Ring Theory.


\end{document}